\definecolor{darkblue}{rgb}{0,0,0.4}
\newcommand{\DD}{\mathbb{D}}
\newcommand{\PP}{\mathbb{P}}
\newcommand{\TT}{\mathbb{T}}
\newcommand{\NN}{\mathbb{N}}
\newcommand{\CC}{\mathbb{C}}
\newcommand{\ZZ}{\mathbb{Z}}
\newcommand{\EE}{\mathbb{E}}
\def\X{\mathcal{X}}
\def\A{\mathcal{A}}
\def\QQ{\mathbb{Q}}
\def\C{\mathcal{C}}
\def\D{\mathcal{D}}
\def\M{\mathcal{M}}
\def\W{\mathcal{W}}
\def\I{\mathcal{I}}
\def\cS{\mathcal{S}}
\def\T{\mathcal{T}}
\def\P{\mathcal{P}}
\def\Q{\mathcal{Q}}
\def\J{\mathcal{J}}
\renewcommand{\L}{\mathcal{L}}
\def\U{\mathcal{U}}
\def\V{\mathcal{V}}
\def\W{\mathcal{W}}
\def\O{\mathcal{O}}
\def\E{\mathcal{E}}
\def\rH{\mathrm{H}}
\def\K{\mathcal{K}}
\def\C{\mathcal{C}}
\DeclareMathOperator{\Exc}{Exc}
\def\alp{{\alpha}}
\def\gam{{\gamma}}
\def\del{\delta}
\def\eps{{\varepsilon}}
\def\Om{{\Omega}}
\def\Del{{\Delta}}
\def\vphi{\varphi}
\newcommand{\HSwarrow}{\kern0.05ex\vcenter{\hbox{\Huge\ensuremath{\Swarrow}}}\kern0.05ex}
\newcommand{\hSwarrow}{\kern0.05ex\vcenter{\hbox{\huge\ensuremath{\Swarrow}}}\kern0.05ex}
\newcommand{\LLSwarrow}{\kern0.05ex\vcenter{\hbox{\LARGE\ensuremath{\Swarrow}}}\kern0.05ex}
\newcommand{\LSwarrow}{\kern0.05ex\vcenter{\hbox{\Large\ensuremath{\Swarrow}}}\kern0.05ex}
\newcommand{\HSearrow}{\kern0.05ex\vcenter{\hbox{\Huge\ensuremath{\Searrow}}}\kern0.05ex}
\newcommand{\hSearrow}{\kern0.05ex\vcenter{\hbox{\huge\ensuremath{\Searrow}}}\kern0.05ex}
\newcommand{\LLSearrow}{\kern0.05ex\vcenter{\hbox{\LARGE\ensuremath{\Searrow}}}\kern0.05ex}
\newcommand{\LSearrow}{\kern0.05ex\vcenter{\hbox{\Large\ensuremath{\Searrow}}}\kern0.05ex}
\newcommand{\HDownarrow}{\kern0.05ex\vcenter{\hbox{\Huge\ensuremath{\Downarrow}}}\kern0.05ex}
\newcommand{\hDownarrow}{\kern0.05ex\vcenter{\hbox{\huge\ensuremath{\Downarrow}}}\kern0.05ex}
\newcommand{\LLDownarrow}{\kern0.05ex\vcenter{\hbox{\LARGE\ensuremath{\Downarrow}}}\kern0.05ex}
\newcommand{\LDownarrow}{\kern0.05ex\vcenter{\hbox{\Large\ensuremath{\Downarrow}}}\kern0.05ex}
\newcommand{\HUparrow}{\kern0.05ex\vcenter{\hbox{\Huge\ensuremath{\Uparrow}}}\kern0.05ex}
\newcommand{\hUparrow}{\kern0.05ex\vcenter{\hbox{\huge\ensuremath{\Uparrow}}}\kern0.05ex}
\newcommand{\LLUparrow}{\kern0.05ex\vcenter{\hbox{\LARGE\ensuremath{\Uparrow}}}\kern0.05ex}
\newcommand{\LUparrow}{\kern0.05ex\vcenter{\hbox{\Large\ensuremath{\Uparrow}}}\kern0.05ex}
\newtheorem{thm}{Theorem}
\newtheorem{cor}[thm]{Corollary}
\newtheorem{lem}[thm]{Lemma}
\newtheorem{pro}[thm]{Proposition}
\newtheorem{expro}[thm]{Example/Proposition}
\numberwithin{thm}{section}
\numberwithin{equation}{section}
\theoremstyle{definition}
\newtheorem{define}[thm]{Definition}
\newtheorem{example}[thm]{Example}
\newtheorem{defn}[thm]{Definition}
\newtheorem{cons}[thm]{Construction}
\newtheorem{obs}[thm]{Observation}
\theoremstyle{remark}
\newtheorem{rem}[thm]{Remark}
\newtheorem{warning}[thm]{Warning}
\DeclareMathOperator{\colim}{colim}
\DeclareMathOperator{\id}{id}
\DeclareFontFamily{OT1}{pzc}{}
\DeclareFontShape{OT1}{pzc}{m}{it}{<-> s * [1.10] pzcmi7t}{}
\DeclareMathAlphabet{\mathpzc}{OT1}{pzc}{m}{it}
\DeclareMathOperator{\cof}{cof}
\DeclareMathOperator{\fib}{fib}
\DeclareMathOperator{\Alg}{Alg}
\DeclareMathOperator{\Grp}{Grp}
\DeclareMathOperator{\op}{op}
\DeclareMathOperator{\Map}{Map}
\DeclareMathOperator{\red}{red}
\DeclareMathOperator{\Cat}{Cat}
\DeclareMathOperator{\CAlg}{CAlg}
\DeclareMathOperator{\Mod}{Mod}
\DeclareMathOperator{\Fun}{Fun}
\DeclareMathOperator{\Ob}{Ob}
\DeclareMathOperator{\Sp}{Sp}
\DeclareMathOperator{\Ho}{ho}
\DeclareMathOperator{\ev}{ev}
\DeclareMathOperator{\fin}{fin}
\DeclareMathOperator{\exc}{exc}
\DeclareMathOperator{\SqZ}{SqZ}
\DeclareMathOperator{\Ab}{Ab}
\DeclareMathOperator{\ab}{ab}
\DeclareMathOperator{\sSet}{sSet}
\DeclareMathOperator{\Loc}{Loc}
\DeclareMathOperator{\Mon}{Mon}
\DeclareMathOperator{\cosk}{cosk}
\DeclareMathOperator{\Fin}{Fin}
\DeclareMathOperator{\lev}{lev}
\newcommand{\Tow}{\operatorname{Tow}_{\mathrm{Pos}}}
\newcommand{\ET}{\E}
\def\x{\overset}
\def\MCom{\M\mathrm{Com}}
\newcommand{\tgpd}{\kern0.05ex\vcenter{\hbox{\footnotesize\ensuremath{2}}}\kern0.05ex\mathcal{G}pd}
\def\rar{\rightarrow}
\def\lrar{\longrightarrow}
\newcommand{\adj}{\mathrel{\substack{\longrightarrow \\[-.6ex] \x{\perp}{\longleftarrow}}}}
\newcommand{\<}{\left<}
\renewcommand{\>}{\right>}
\def\maketag@@@#1{\hbox{\m@th\normalfont\normalsize#1}}
\title{On $k$-invariants for $(\infty, n)$-categories}
\author{Yonatan Harpaz}
\email{harpaz@math.univ-paris13.fr}
\address{LAGA\\ Institut Galilée\\
99 Avenue Jean Baptiste Cl\'ement\\
93430 Villetaneuse, France.}
\author{Joost Nuiten}
\email{joost-jakob.nuiten@umontpellier.fr}
\address{IMAG\\ Universit\'e de Montpellier\\
Place Eug\`ene Bataillon\\
34090 Montpellier, France.}
\author{Matan Prasma}
\email{mtnprsm@gmail.com}
\address{Neurocat GmbH\\
Rudower Chaussee 29\\ 12489 Berlin, Germany}
\begin{document}

\begin{abstract}
Every $(\infty, n)$-category can be approximated by its tower of homotopy $(m, n)$-categories. In this paper, we prove that the successive stages of this tower are classified by $k$-invariants, analogously to the classical Postnikov tower for spaces. Our proof relies on an abstract analysis of Postnikov-type towers equipped with $k$-invariants, and also yields a construction of $k$-invariants for algebras over $\infty$-operads and enriched $\infty$-categories.
\end{abstract}

\maketitle

\tableofcontents

\section{Introduction}
The weak homotopy type of a topological space can be conveniently studied using its Postnikov tower
$$\xymatrix{
X\ar[r] & \dots\ar[r] & \tau_{\leq a}X\ar[r] & \tau_{\leq a-1}X\ar[r] & \dots \ar[r] & \tau_{\leq 0}X=\pi_0(X).
}$$
The Postnikov tower allows one (theoretically) to reconstruct $X$ from algebraic and cohomological data. Indeed, the lowest stages of this tower encode the path components of $X$ and its fundamental groupoid. For the higher stages, the passage from $\tau_{\leq a-1}X$ to $\tau_{\leq a}X$ is completely determined by a cohomology class
$$
k_{a}\in \rH^{a+1}\big(\tau_{\leq a-1}X, \pi_{a}(X)\big).
$$
Indeed, given a map $f\colon Y\lrar \tau_{\leq a-1}X$, there exists a lift
\begin{equation}\label{d:lift}\vcenter{\xymatrix{
& \tau_{\leq a}X\ar[d]\\
Y\ar@{..>}[ru]\ar[r]_-f & \tau_{\leq a-1}X
}}\end{equation}
if and only if the cohomology class $f^*k_{a}$ vanishes on $Y$. In this case, the $i$-th homotopy group of the space of lifts \eqref{d:lift} can be identified (noncanonically) with the $(a-i)$-th cohomology group of $Y$ with coefficients in $f^*\pi_{a}(X)$. Here it should be noted that the homotopy groups $\pi_{a}(X)$ typically form a \textbf{local system of abelian groups}.

The purpose of this paper is to describe an analogue of the Postnikov tower for $(\infty, n)$-categories. More precisely, every $(\infty, n)$-category $\C$ admits a tower of homotopy $(m, n)$-categories \cite[\S 3.5]{Lur09b} (see Section \ref{s:local systems})
$$\xymatrix{
\C \ar[r] & \dots\ar[r] & \Ho_{(m, n)}\C\ar[r] & \Ho_{(m-1, n)}\C \ar[r] & \dots \ar[r] & \Ho_{(n, n)}\C.
}$$
Our main result asserts that there are again cohomology classes which control the passage from the homotopy $(m, n)$-category to the homotopy $(m+1, n)$-category:
\begin{thm}[informal]\label{t:main-theorem-intro}
For each $a\geq 2$, the extension $\Ho_{(n+a, n)}\C\lrar \Ho_{(n+a-1, n)}\C$ is classified by a $k$-invariant
$$
k_{a}\in \rH^{a+1}\big(\Ho_{(n+a-1, n)}\C, \pi_a(\C)\big),
$$
where $\pi_a(\C)$ is a \textbf{local system of abelian groups} on the $(\infty, n)$-category $\Ho_{(n+1, n)}\C$.
\end{thm}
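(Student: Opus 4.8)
The plan is to derive Theorem~\ref{t:main-theorem-intro} from the abstract theory of Postnikov-type towers with $k$-invariants developed above, by exhibiting the tower $\{\Ho_{(m,n)}\C\}_{m\geq n}$ as a tower of square-zero extensions in the $\infty$-category $\Cat_{(\infty,n)}$ and invoking the general classification. The first step is structural: each $\Ho_{(m,n)}$ is a reflective localization, the tower converges to $\C$, and --- the point where the specifics of $(\infty,n)$-categories enter --- for $m=n+a$ with $a\geq 1$ the passage from $\Ho_{(n+a,n)}\C$ to $\Ho_{(n+a-1,n)}\C$ alters only the $a$-th homotopy groups of the $n$-morphism spaces. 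For $a\geq 2$ these groups are abelian, and together with their monodromy (the action of the $\pi_1$ of the $n$-morphism spaces) they assemble, in the sense of Section~\ref{s:local systems}, into a local system of abelian groups $\pi_a(\C)$ on $\Ho_{(n+1,n)}\C$ --- precisely the data retained by the homotopy $(n+1,n)$-category, which is why $\pi_a(\C)$ naturally lives there. Since $a\geq 2$ there is still a truncation map $\Ho_{(n+a-1,n)}\C\to\Ho_{(n+1,n)}\C$, so $\pi_a(\C)$ pulls back to the base of the extension under study; this is the exact analogue of the classical fact that $\pi_a$ of a space is a local system over its fundamental groupoid.

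The heart of the argument is to upgrade this to the assertion that $\Ho_{(n+a,n)}\C\lrar\Ho_{(n+a-1,n)}\C=:X$ is a square-zero extension of $X$ by the Eilenberg--MacLane module $\pi_a(\C)[a]$, in the sense of the abstract cotangent complex formalism. For this one identifies the fibre $\Sp\big((\Cat_{(\infty,n)})_{/X}\big)$ of the tangent category over $X$ with a category of local systems of spectra over a suitable twisted-arrow-type groupoid built from $\Ho_{(n+1,n)}\C$, under which a local system of abelian groups regarded as a module concentrated in degree $a$ becomes such an Eilenberg--MacLane module. A connectivity estimate for the cotangent complex $\LL_X$ then matches the truncation filtration of $\Cat_{(\infty,n)}$ with the Postnikov filtration of the layers, forcing the $(n+a)$- versus $(n+a-1)$-truncation to be exactly the operation of attaching $\pi_a(\C)[a]$; consequently $\Ho_{(n+a,n)}\C$ is classified, as a square-zero extension of $X$, by a derivation $\LL_X\to\pi_a(\C)[a+1]$, i.e.\ by a class in the Quillen (Andr\'e--Quillen) cohomology group $\rH^{a+1}\big(X;\pi_a(\C)\big)$. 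Because the coefficient system is discrete and pulled back from $\Ho_{(n+1,n)}\C$, this group agrees with the one appearing in the theorem. The hypothesis $a\geq 2$ is used throughout: it ensures $\pi_a$ is abelian --- for $a=1$ the map $\Ho_{(n+1,n)}\C\to\Ho_{(n,n)}\C$ is a non-abelian, gerbe-like phenomenon rather than a square-zero extension --- and that the coefficient system is defined over the base $\Ho_{(n+a-1,n)}\C$.

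I expect the main obstacle to be the explicit control of the tangent category and cotangent complex of $\Cat_{(\infty,n)}$: identifying $\Sp\big((\Cat_{(\infty,n)})_{/X}\big)$ with local systems of spectra over the correct iterated twisted-arrow groupoid, and establishing the connectivity estimate for $\LL_X$ that aligns the categorical truncation filtration with the Postnikov filtration of each layer degree by degree. A subsidiary difficulty is the bookkeeping of the three truncation levels $n+a$, $n+a-1$ and $n+1$, together with checking the compatibility of the monodromy action with base change along $\Ho_{(n+a-1,n)}\C\to\Ho_{(n+1,n)}\C$.
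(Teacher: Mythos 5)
The key unproved step in your outline is the assertion that the truncation map $\Ho_{(n+a, n)}\C\lrar \Ho_{(n+a-1, n)}\C$ \emph{is} a square zero extension. In the cotangent complex formalism a square zero extension of $X$ is, by definition, a pullback of the zero section along a map $X\lrar \Omega^\infty\Sigma\K$; the general theory classifies such extensions by derivations $\LL_X\lrar \K$, but it provides no recognition criterion telling you that a given map arises this way. The ``small extension'' results that convert connectivity estimates into square zero structure (e.g.\ \cite[\S 7.4.1]{Lur14}) are specific to connective algebras in a stable setting, and the topos-theoretic construction of $k$-invariants does not apply either, since $\Cat_{(\infty, n)}$ is neither stable nor an $\infty$-topos. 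So the sentence ``a connectivity estimate for $\LL_X$ \dots forcing the $(n+a)$- versus $(n+a-1)$-truncation to be exactly the operation of attaching $\pi_a(\C)[a]$'' is not a step one can cite or routinely verify: it is essentially the theorem itself, and nothing in your sketch indicates how to produce the classifying map $k_a$ or the Cartesian square it must sit in.

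This is also where your route diverges from the paper, which never argues by recognizing square zero extensions abstractly. Instead it builds the $k$-invariants: first an explicit functorial, multiplicative abstract Postnikov tower on spaces (simplicial Dwyer--Kan $k$-invariants, Example/Proposition \ref{e:postnikov for spaces}), then the main transfer result (Theorem \ref{t:main-theorem}) showing that applying such a tower to mapping objects of a $\V$-enriched $\infty$-category and completing again yields an abstract Postnikov tower --- the crux being that objectwise completion preserves the relevant pullbacks and limit towers, via the cube and tower lemmas (Lemmas \ref{l:cube lemma}, \ref{l:tower lemma}), the isofibration condition and $0$-connectedness --- and finally iterates in $n$ to get Theorem \ref{t:main-theorem-oo-n-cats}; the identification of the coefficients as local systems of abelian groups is done afterwards through a $t$-orientation on $\T\Cat_{(\infty, n)}$ whose heart is $\Loc_{(\infty, n)}$ (Corollary \ref{c:local systems}). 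Your proposed identification of the tangent category is in the spirit of \cite{part2, part3} (and note that $\T_X\Cat_{(\infty, n)}$ is governed by a twisted-arrow-type category built from $X$ itself, not from $\Ho_{(n+1, n)}\C$; only the heart-level coefficients factor through $\Ho_{(n+1, n)}\C$, and in the twisted-arrow picture there are degree shifts to track), but even granting it, the missing ingredient remains an unstable recognition principle for square zero extensions in $\Cat_{(\infty, n)}$; supplying one would amount to redoing the inductive enriched-categorical argument that the paper carries out.
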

In the case of $(\infty, 1)$-categories, these $k$-invariants have also been constructed explicitly in terms of simplicial categories by Dwyer--Kan--Smith \cite{DKS86, DK88}. For $n>1$, the above result is stated (without proof) and used in \cite{Lur09b}. In \cite{part3}, we have used this result as part of an obstruction-theoretic proof of the fact that adjunctions in $(\infty, 2)$-categories are uniquely determined at the level of the homotopy $2$-category (cf.\ also \cite{RV16}).

To make Theorem \ref{t:main-theorem-intro} more precise, let us recall that for any local system of abelian groups $\A$ on a space $X$, there exist Eilenberg--Maclane spaces $\mathrm{K}(\A, a)\lrar \tau_{\leq 1}X$, defined in the homotopy category $\Ho(\cS_{/\tau_{\leq 1}X})$ by the following universal property: for every map $f\colon Y\lrar \tau_{\leq 1}X$, there is a natural bijection
$$
\rH^a(Y, f^*\A) \cong \pi_0\Map_{/\tau_{\leq 1}(X)}\big(Y, \mathrm{K}(\A, a)\big).
$$
In fact, the Eilenberg--Maclane spaces $\mathrm{K}(\A, a)$ are related by equivalences 
$$
\mathrm{K}(\A, a)\stackrel{\sim}{\lrar} \Omega_{/\tau_{\leq 1}X}\mathrm{K}(\A, a+1)
$$
where $\Omega_{/\tau_{\leq 1}X}\mathrm{K}(\A, a+1)$ computes the fiberwise loop space of $\mathrm{K}(\A, a+1)$ over $\tau_{\leq 1}X$ (at the basepoints given by the canonical section classifying the zero cohomology class). In other words, these Eilenberg--Maclane spaces can be organized into a parametrized spectrum $\rH\A$ over $\tau_{\leq 1}X$ such that $\mathrm{K}(\A, a)\simeq \Omega^{\infty}\big(\Sigma^a\rH\A\big)$ \cite{MS06}. From an $\infty$-categorical perspective, this parametrized spectrum can also be described more precisely as follows \cite{ABGHR14}: the local system $\A$ determines a functor of $\infty$-categories $\rH\A\colon \tau_{\leq 1}X\lrar \Ab\lrar \Sp$ sending each $x\in \tau_{\leq 1}X$ to the Eilenberg--Maclane spectrum of the abelian group $\A_x$. By the Grothendieck construction, such an $\infty$-functor to spectra can equivalently be viewed as a spectrum object in spaces over $\tau_{\leq 1}X$.

In these terms, the $k$-invariants can be interpreted as maps that fit into commuting squares for $a\geq 2$
$$\xymatrix{
\tau_{\leq a}X\ar[d]\ar[r] & \tau_{\leq 1}X\ar[d]^0\\
\tau_{\leq a-1}X\ar[r]_-{k_{a}} & \Omega^{\infty}\big(\Sigma^{a+1}\rH\pi_{a}(X)\big).
}$$
Here the right vertical map classifies the zero cohomology class. In fact, this square is homotopy Cartesian, which implies that the space of sections \eqref{d:lift} is homotopy equivalent to the space of null-homotopies of $f^*k_{a}$.

Our more precise version of Theorem \ref{t:main-theorem-intro} is then the following:
\begin{thm}[Theorem \ref{t:main-theorem-oo-n-cats}]\label{thm:main-theorem-intro-more-precise}
For any $(\infty, n)$-category $\C$ and $a\geq 2$, there is a parametrized spectrum object $\rH\pi_a(\C)$ internal to $(\infty, n)$-categories, whose base object is $\Ho_{(n+1, n)}\C$, so that there is a pullback square of $(\infty, n)$-categories
\begin{equation}\label{d:homotopy category as square zero}\vcenter{\xymatrix{
\Ho_{(n+a, n)}\C\ar[d]\ar[r] & \Ho_{(n+1, n)}\C\ar[d]^0\\
\Ho_{(n+a-1, n)}\C\ar[r]_-{k_{a}} & \Omega^{\infty}\big(\Sigma^{a+1}\rH\pi_a(\C)\big).
}}\end{equation}
\end{thm}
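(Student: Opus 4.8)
The plan is to isolate an abstract statement about \emph{Postnikov presentations} — towers of localizations whose successive layers are abelian — and to check that the tower of homotopy $(m,n)$-categories is an instance of it; the same formalism is meant to apply uniformly to algebras over $\infty$-operads and to enriched $\infty$-categories. Concretely, one works in a presentable $\infty$-category $\mathcal{E}$ (a model for $\Cat_{(\infty,n)}$, say $\Theta_n$-spaces) carrying a tower of accessible localizations $P_m$, here $P_m=\Ho_{(m,n)}$, with $P_{m-1}P_m\simeq P_{m-1}$, such that every object is the limit of its tower, $\C\simeq\lim_m\Ho_{(m,n)}\C$, and such that above a threshold (here $m\geq n+2$) the layers $P_mX\to P_{m-1}X$ are abelian, in the sense that they lift — compatibly in $m$ — to the tangent category $T\mathcal{E}$ over a fixed base object (here $\mathcal{B}:=\Ho_{(n+1,n)}\C$). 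The abstract theorem to prove is that under these hypotheses each layer $P_mX\to P_{m-1}X$ is a square-zero extension, hence fits into a canonical pullback square of the form \eqref{d:homotopy category as square zero}. So the first step is to formulate that theorem precisely, and then to verify its hypotheses for $(\Cat_{(\infty,n)},\Ho_{(m,n)})$ one at a time.

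\textbf{The deformation-theoretic engine.} The mechanism behind the abstract theorem is the deformation theory of the slices $(\Cat_{(\infty,n)})_{/\mathcal{B}}$. Fixing $\mathcal{B}=\Ho_{(n+1,n)}\C$, I would identify the relevant part of the stabilization $\Sp\big((\Cat_{(\infty,n)})_{/\mathcal{B}}\big)$ with a category of parametrized spectra (local systems of spectra) on $\mathcal{B}$, so that a local system of abelian groups $\A$ on $\mathcal{B}$ produces an Eilenberg--MacLane parametrized spectrum $\rH\A$ over $\mathcal{B}$ internal to $(\infty,n)$-categories, with $\Omega^\infty\big(\Sigma^{a+1}\rH\A\big)$ the trivial square-zero extension of $\mathcal{B}$ by $\Sigma^{a+1}\rH\A$ and its zero section the map labelled $0$ in \eqref{d:homotopy category as square zero}. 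Granting (below) that the layer $\Ho_{(n+a,n)}\C\to\Ho_{(n+a-1,n)}\C$ is a square-zero extension by $\Sigma^a\rH\pi_a(\C)$, the classification of square-zero extensions by derivations — i.e.\ by points of $\Map_{/\mathcal{B}}\big(\Ho_{(n+a-1,n)}\C,\ \mathcal{B}\oplus\Sigma^{a+1}\rH\pi_a(\C)\big)$, whose components compute $\rH^{a+1}\big(\Ho_{(n+a-1,n)}\C,\pi_a(\C)\big)$ — supplies the class $k_a$ together with the asserted homotopy pullback square.

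\textbf{Verifying the input.} Three things then remain. First, convergence of the tower and the basic calculus of $\Ho_{(m,n)}$, which are available from \cite{Lur09b} and Section~\ref{s:local systems}. Second, the identification of the layer: I would show that $\fib\big(\Ho_{(n+a,n)}\C\to\Ho_{(n+a-1,n)}\C\big)$, as a parametrized spectrum over $\Ho_{(n+a-1,n)}\C$, is concentrated in the single degree $a$, its only nonvanishing homotopy group being the local system $\pi_a(\C)$, so that it is of the form $\Sigma^a\rH\pi_a(\C)$. Third, I would show that this local system, hence the module, is pulled back from $\mathcal{B}=\Ho_{(n+1,n)}\C$: the point is that $\pi_a$ for $a\geq 2$ is insensitive to cells of dimension above $n+1$ (the higher cells act trivially, exactly as a local system of abelian groups on a space factors through its fundamental groupoid), which is also what forces $\mathcal{B}$, rather than $\Ho_{(n+a-1,n)}\C$, to appear as the base in the statement. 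This is where the restriction $a\geq 2$ enters: the bottom two stages carry genuinely nonabelian data and are not square-zero extensions. As a sanity check, for $n=1$ the resulting $k$-invariants should agree with those of Dwyer--Kan--Smith \cite{DKS86, DK88}.

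\textbf{Main obstacle.} The crux is the claim that the layer $\Ho_{(n+a,n)}\C\to\Ho_{(n+a-1,n)}\C$ is \emph{genuinely} a square-zero extension, and not merely abelian on homotopy groups. This is a recognition principle: a map of $(\infty,n)$-categories that becomes an equivalence after $\Ho_{(n+a-1,n)}$ and whose fiber of hom-objects is an Eilenberg--MacLane object in the single degree $a$ must be classified by the tangent category, because there is no room between homotopical degrees $a-1$ and $a$ for a non-square-zero correction term. Establishing this requires a sufficiently explicit grip on $T\Cat_{(\infty,n)}$ (or on the tangent category of enriched $\infty$-categories, into which one bootstraps by induction on $n$), on the way $\Ho_{(m,n)}$ interacts with stabilization and with finite limits, and on the attendant connectivity estimates. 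I expect this step to carry the weight of the proof; once it is in place, feeding convergence, the identification of $\pi_a(\C)$, and the layer-is-square-zero statement into the abstract machine yields Theorem~\ref{thm:main-theorem-intro-more-precise}.
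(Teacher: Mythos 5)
There is a genuine gap, and it sits exactly where you place it yourself: the claim that each layer $\Ho_{(n+a,n)}\C\to\Ho_{(n+a-1,n)}\C$ is \emph{genuinely} a square-zero extension. Your justification --- a ``recognition principle'' asserting that a map whose fiber is an Eilenberg--MacLane object concentrated in degree $a$ must be classified by the tangent category because ``there is no room'' for a correction term --- is a heuristic, not an argument, and it is precisely the content of the theorem being proved. Recognition results of this kind do exist (e.g.\ Lurie's criterion for connective $\EE_\infty$-rings, which underlies \cite[Corollary 7.4.1.28]{Lur14}), but they rest on a fully developed cotangent-complex formalism with strong connectivity estimates in the relevant category; nothing of the sort is available off the shelf for $\Cat_{(\infty,n)}$, and your proposal defers exactly the ingredients (control of $\T\Cat_{(\infty,n)}$, compatibility of $\Ho_{(m,n)}$ with stabilization and finite limits, connectivity estimates) that would be needed to make it run. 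Even granting the fiber identification, you would still have to produce the classifying map over the \emph{small} base $\Ho_{(n+1,n)}\C$ (knowing that the coefficient module is pulled back from $\Ho_{(n+1,n)}\C$ does not by itself descend the $k$-invariant), and you would have to control how completion/Dwyer--Kan localization interacts with the asserted pullback squares, which is a real issue for enriched categories.

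The paper avoids the recognition problem altogether by \emph{constructing} the $k$-invariants rather than recognizing them: the classical Dwyer--Kan simplicial construction packages the Postnikov tower of spaces, with its $k$-invariants, into a multiplicative functorial abstract Postnikov tower (Example/Proposition \ref{e:postnikov for spaces}); Theorem \ref{t:main-theorem} then shows that a $0$-connected multiplicative abstract Postnikov tower on $\V$ induces one on $\Cat(\V)$ by applying it to mapping objects and completing, with the cube and tower lemmas (Lemmas \ref{l:cube lemma} and \ref{l:tower lemma}, via Corollaries \ref{c:main-point-cube} and \ref{c:main-point-tower}) guaranteeing that completion preserves the relevant pullbacks and limit towers along isofibrations --- this is where the $0$-connectedness hypothesis does the work your connectivity estimates would have to do. Induction on $n$ (Corollary \ref{c:enriched n-cats}) then yields Theorem \ref{t:main-theorem-oo-n-cats}, with $\rH\pi_a(\C)$ \emph{defined} from the construction and only afterwards identified, via the $t$-orientation on $\T\Cat_{(\infty,n)}$, with the Eilenberg--MacLane spectrum of the local system $\pi_a(\C)$ (Corollary \ref{c:local systems}). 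So your plan is not merely a different route that happens to be incomplete; as written, its central step restates the theorem and would require substantial new deformation-theoretic input for $(\infty,n)$-categories that neither the proposal nor the existing literature supplies.
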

Furthermore, we prove that the parametrized spectrum $\rH\pi_a(\C)$ can indeed be thought of as an Eilenberg--Maclane spectrum: it is contained in the heart of a certain $t$-structure on the $\infty$-category of parametrized spectrum objects over $\Ho_{(n+1, n)}\C$ (Corollary \ref{c:local systems}). This heart consists of local systems of abelian groups on the $(\infty, n)$-category $\Ho_{(n+1, n)}\C$, as defined (somewhat informally) in \cite{Lur09b} (see Definition \ref{d:local systems} and Remark \ref{r:local system}). 

To prove Theorem \ref{thm:main-theorem-intro-more-precise}, the main idea will be to proceed by induction on the categorical dimension $n$. More precisely, the structure of the Postnikov tower, together with its $k$-invariants, can be axiomatized in terms of `abstract Postnikov towers'. We prove that a functorial construction of such towers in a symmetric monoidal $\infty$-category $\V$ gives rise to an abstract Postnikov tower construction on the $\infty$-category $\Cat(\V)$ of $\V$-enriched $\infty$-categories (Theorem \ref{t:main-theorem}). Furthermore, the resulting functorial abstract Postnikov tower on $\Cat(\V)$ respects the natural symmetric monoidal structure on $\Cat(\V)$ inherited from $\V$. This can be used to proceed inductively. 

More generally, this argument can also be used to provide $k$-invariants for Postnikov towers of algebras over $\infty$-operads (see Proposition \ref{p:quillen-for-algebras} and Example \ref{e:algebras in prestable}). These $k$-invariants typically take values in certain Andr\'e-Quillen cohomology groups, and have also been considered (in specific cases) in \cite{GH00, BM, Lur14}.

\subsection*{Outline} Let us now give an outline of this work: in Section \ref{s:tangentmonoidal}, we recall the definition of the tangent bundle of an $\infty$-category and the related theory of `square zero extensions'. Furthermore, we discuss the `square zero' monoidal structure on the tangent bundle of a symmetric monoidal presentable $\infty$-category $\C$; this is particularly useful to describe the tangent bundles to categories of algebras.

In Section \ref{s:system}, we give an abstract axiomatization of towers of square zero extensions, which we call \textbf{abstract Postnikov towers}, as well as multiplicative refinements thereof. In particular, we show how multiplicative abstract Postnikov towers induce (multiplicative) abstract Postnikov towers for algebras over $\infty$-operads. As the basis of our inductive proof, we show that the Postnikov tower of spaces is part of a multiplicative functorial abstract Postnikov tower. Section \ref{s:ncat} contains our main result, Theorem \ref{t:main-theorem}: we show that multiplicative abstract Postnikov towers induce multiplicative abstract Postnikov towers at the level of enriched $\infty$-categories.

In Section \ref{s:local systems}, we apply this result inductively to prove that the homotopy $(m, n)$-categories of an $(\infty, n)$-category are part of a multiplicative abstract Postnikov tower (Theorem \ref{t:main-theorem-oo-n-cats}); in particular, this provides the required pullback squares \eqref{d:homotopy category as square zero}. Finally, we discuss how the tangent bundle of $(\infty, n)$-categories carries a (family of) $t$-structures, whose heart consists of the category of local systems of abelian groups on $(\infty, n)$-categories (Definition \ref{d:local systems}). The parametrized spectra $\rH\pi_a(\C)$ appearing in \eqref{d:homotopy category as square zero} then appear as the Eilenberg--Maclane spectra associated to such local systems.

\subsection*{Conventions}
We will make use of the language of $\infty$-categories, i.e.\ quasicategories, following the standard references \cite{Lur09, Lur14}.
Furthermore, we will employ the following terminology: we will refer to symmetric monoidal $\infty$-categories as \textbf{SM $\infty$-categories}. A presentable SM $\infty$-category is a presentable $\infty$-category equipped with a closed symmetric monoidal structure, i.e.\ an object in $\CAlg(\Pr^\mathrm{L})$.

\subsection*{Acknowledgments}
This project was funded by the CNRS, under the programme Projet Exploratoire de Premier Soutien ``Jeune chercheuse, jeune chercheur'' (PEPS JCJC). Furthermore, J.N.\ has received funding from the European Research Council (ERC) under the European Union’s Horizon 2020 research and innovation programme (grant agreement No 768679). M.P.\ was supported by grant SFB 1085.

\section{Recollections on square zero extensions}\label{s:tangentmonoidal}
The purpose of this section is to recall some elements of the cotangent complex formalism described by Lurie \cite[\S 7.3]{Lur14}. In particular, we will recall the definition of the tangent bundle of an $\infty$-category $\C$ and the notion of a square zero extension. To motivate this terminology, we show that the tangent bundle inherits a `square zero' monoidal structure from $\V$.

\subsection{Tangent bundle}
Let $\V$ be an $\infty$-category with finite limits. Following Lurie \cite[Definition 7.3.1.9]{Lur14}, we define the \textbf{tangent bundle} of $\V$ to be the $\infty$-category
$$
\T\V = \Exc(\cS^{\fin}_\ast,\V)
$$
of excisive functors $F\colon \cS^{\fin}_\ast\lrar \V$ from the $\infty$-category of finite pointed spaces, i.e.\ those functors sending pushout squares to pullback squares. The $\infty$-category $\T\V$ comes with a projection
$$\xymatrix{
\pi= \ev_\ast\colon \T\V\ar[r] & \V & & \Omega^\infty= \ev_{S^0}\colon \T\V\ar[r] & \V
}$$
the functors taking the base, resp.\ the (parametrized) infinite loop space object underlying such a parametrized spectrum. The functor $\pi$ is a Cartesian fibration and admits both a left and a right adjoint, both taking the constant excisive functor on an object in $\V$. We refer to the fiber of $\pi$ at an object $X\in \V$ as the \textbf{tangent $\infty$-category} $\T_X\V$ of $\V$ at $X$; it can be identified with the stabilization $\Sp(\V_{/X})$ of the over-category $\V_{/X}$ \cite[\S 7.3.1]{Lur14}. When $\V$ is presentable, $\T\V$ and each of the fibers $\T_X\V$ are presentable as well and the functor $\Om^\infty$ admits a left adjoint $\Sigma^\infty_+$.

\begin{example}\label{e:stable}
Let $\V$ be an $\infty$-category with finite limits and consider the full subcategory $\mathrm{Ret}\subseteq \cS^{\fin}_\ast$ on $\ast$ and $S^0$. Then $\mathrm{Ret}$ is equivalent to the retract category \cite[Definition 4.4.5.2]{Lur09} and there are functors
$$\xymatrix{
\T\V\ar[r]^-G\ar[rd]_{\ev_*} & \Fun(\mathrm{Ret}, \V)\ar[d]_{\ev_*}\ar[r]^-{\fib} & \V\times \V\ar[ld]^{\pi_1}\\
& \V &
}$$
Here the first horizontal functor is given by restriction and $\fib$ sends a retract diagram $X\lrar Y\lrar X$ to the tuple $\big(X, \fib(Y\lrar X)\big)$. The first functor exhibits $\T\V$ as the fiberwise stabilization of $\Fun(\mathrm{Ret}, \V)$, that is, for every $X \in \V$ the induced functor $\T_X\V \lrar \Fun(\mathrm{Ret}, \V)_X \simeq (\V_{/X})_*$ on fibers over $X$ exhibits its domain as the stabilization of its target.

Now suppose that $\V$ is an additive $\infty$-category. Then the functor $\fib$ is an equivalence, with (left adjoint) inverse sending $(X, Y)$ to $X\lrar X\oplus Y\lrar X$ (see e.g.\ \cite[Lemma 1.5.12]{9author}). We consequently obtain in this case an equivalence 
$$
\xymatrix{
\T\V\ar[rr]^-{\simeq}\ar[dr]_{\pi} && \V\times\Sp(\V) \ar[dl]^-{\pi_1} \\
& \V & \\
}
$$
between $\T\V$ and the fiberwise stabilization of $\V\times \V$ over $\V$. If $\V$ is furthermore \emph{stable} then both $G$ and $\fib$ are equivalences and one obtains an identification $\T\V\simeq \V\times \V$ such that $\pi(X, Y)\simeq X$ and $\Omega^\infty(X, Y)\simeq X\oplus Y$. It then follows that for any additive presentable $\infty$-category $\V$, the functor $\Sigma^\infty\colon \V\lrar \Sp(\V)$ to its stabilization fits in a pullback square of tangent categories
$$\xymatrix{
\T\V\ar[r]\ar[d] & \T(\Sp(\V))\ar[d]\\
\V\ar[r] & \Sp(\V).
}$$
\end{example}

\begin{defn}\label{d:aff}
Let $\V$ be a presentable $\infty$-category. Then the inclusion $\T\V\lrar \Fun(\cS^{\fin}_\ast, \V)$ admits a left adjoint, which we will denote by $X\mapsto X^{\exc}$. We will say that a map $X\lrar Y$ in $\Fun(\cS^{\fin}_\ast, \V)$ is a \textbf{$\T\V$-local equivalence} if the map $X^{\exc}\lrar Y^{\exc}$ is an equivalence.
\end{defn}
\begin{rem}\label{r:generating local equiv}
For any $S\in \cS^{\fin}_\ast$ and $C\in \V$, let $h_S\otimes C=\Map(S, -)\otimes C$ be the corresponding corepresentable functor, i.e.\ the left Kan extension of $C\colon \ast\lrar \V$ along $S\colon \ast\lrar \cS^{\fin}_\ast$. Note that $F\in \Fun(\cS^{\fin}_\ast, \V)$ is excisive if and only if it is a local object with respect to the set of maps
\begin{equation}\label{d:generating local equiv}
\big(h_{S_1}\coprod_{h_{S_3}} h_{S_2}\big)\otimes C_\alpha \lrar \big(h_{S_0}\otimes C_\alpha)
\end{equation}
for any set of generators $\{C_\alpha\}$ of $\V$ and any pushout square in $\cS^{\fin}_\ast$
$$\xymatrix{
S_0\ar[r]\ar[d] & S_1\ar[d]\\
S_2\ar[r] & S_3.
}$$
In particular, the $\T\V$-local equivalences are strongly generated by this set of maps \cite[Proposition 5.5.4.15]{Lur09}.
\end{rem}
\begin{rem}\label{r:excisive approximation preserves cartesian arrows}
For any presentable $\infty$-category $\V$, the description of the generating $\T\V$-local equivalences from Remark \ref{r:generating local equiv} shows that evaluation at $\ast\in \cS^{\fin}_\ast$ sends $\T\V$-local equivalences to equivalences in $\V$. It follows that there is a commuting diagram
$$\xymatrix@C=3pc{
\T\V\ar@{^{(}->}[0, 0]+<3ex, 0ex>;[r]\ar[rd] & \Fun(\cS_{\ast}^{\fin}, \V)\ar[r]^-{(-)^{\exc}} \ar[d] & \T\V\ar[ld]\\
& \V.
}$$
The vertical functors are Cartesian (and coCartesian) fibrations, with right adjoint sections taking the constant $\cS^{\fin}_\ast$-diagram. In particular, an arrow in $\T\V$ or $\Fun(\cS^{\fin}_\ast, \V)$ is Cartesian if and only if it is the pullback of a map between constant diagrams. It follows that the (right adjoint) inclusion $\T\V\hookrightarrow \Fun(\cS^{\fin}_\ast, \V)$ preserves Cartesian arrows. When $\V$ is compactly generated, or more generally differentiable \cite[Definition 6.1.1.6]{Lur14} (cf.\ Remark \ref{r:differentiable}), the functor $(-)^{\exc}$ preserves Cartesian arrows by \cite[Theorem 6.1.1.10]{Lur14}.
\end{rem}
Recall that every $E\in \T_X\V$ defines a reduced excisive functor $E\colon \cS^{\fin}_\ast\lrar \V_{/X}$. Restricting $E$ to the full subcategory of finite pointed sets, we obtain a very special $\Gamma$-space object in $\V_{/X}$ in the sense of Segal, whose underlying object is $\Omega^\infty(E)$. In other words, $\Omega^\infty(E)$ has the structure of a grouplike $\EE_\infty$-monoid in the sense of \cite{GGN15}. 

Conversely, in the presence of loop space machinery, every grouplike $\EE_\infty$-monoid arises from a spectrum. For later purposes, let us make this slightly more precise: suppose that $\V$ is a presentable $\infty$-category and let 
$$
\Grp(\V)\subseteq \Fun(\Delta^{\op}, \V) \qquad\qquad \Grp_{\EE_\infty}(\V)\subseteq \Fun(\Fin_\ast, \V)
$$
denote the $\infty$-categories of grouplike monoids, resp.\ grouplike $\EE_\infty$-monoids in $\V$. Note that both arise as full (reflective) subcategories of diagrams satisfying the grouplike Segal conditions \cite[\S 2.4.2]{Lur14}, \cite{GGN15}. In addition, there is an adjoint pair
\begin{equation}\label{e:delooping}\vcenter{\xymatrix{
B\colon \Grp(\V)\ar@<1ex>[r] & \V_\ast\ar@<1ex>[l]_-\perp\colon \Omega
}}\end{equation}
where the left adjoint sends a grouplike monoid to its bar construction and the right adjoint sends a pointed object in $\V$ to its loop space (endowed with the group structure coming from the usual cogroup structure $S^1\lrar S^1\vee S^1$).
\begin{defn}
Let $\V$ be a presentable $\infty$-category. We will say that $\V$ \textbf{has loop space machinery} if it satisfies the following conditions:
\begin{enumerate}
\item the Cartesian product $\V\times\V\stackrel{\times}{\lrar} \V$ preserves geometric realizations.
\item the unit of the adjunction \eqref{e:delooping} is an equivalence.
\end{enumerate}
\end{defn}
\begin{example}\label{ex:loopspace}
All $\infty$-toposes and stable presentable $\infty$-categories have loop space machinery. More generally, a pre-stable presentable $\infty$-category (i.e.\ the connective part of a $t$-structure on a stable $\infty$-category \cite[\S C.1]{Lur16}) has loop space machinery. If $\V$ has loop space machinery and $U\colon \W\lrar \V$ is a right adjoint functor preserving sifted colimits and detecting equivalences (in particular, it is monadic), then $\W$ has loop space machinery.
\end{example}
\begin{pro}\label{p:gamma objects as spectra}
Let $\V$ be a presentable $\infty$-category with loop space machinery and consider the adjoint pair
$$\xymatrix{
\mathbf{B}\colon \Grp_{\EE_\infty}(\V)\ar@<1ex>[r] & \Sp(\V)=\Exc_{\red}(\cS_\ast^{\fin}, \V)\colon \Omega^\infty\ar@<1ex>[l]_-\perp
}$$
whose right adjoint restricts a reduced excisive functor along the inclusion $i\colon \Fin_\ast\lrar \cS^{\fin}_\ast$. Then the left adjoint $\mathbf{B}$ is fully faithful and a functor $F\colon \cS_\ast^{\fin}\lrar \V$ lies in its essential image (in particular, it will be reduced excisive) if and only if it satisfies the following two conditions:
\begin{enumerate}
\item\label{it:segal} Its restriction to $\Fin_\ast$ satisfies the grouplike Segal conditions.
\item\label{it:preserve sifted} It preserves all \textbf{finite geometric realizations}, i.e.\ colimits of simplicial diagrams in $\cS_\ast^{\fin}$ that are left Kan extended from $\Delta_{\leq n}^{\op}\subseteq \Delta^{\op}$, for some $n$.
\end{enumerate}
\end{pro}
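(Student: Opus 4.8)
The plan is to exhibit $\mathbf{B}$ as an excisive approximation of a left Kan extension, and to identify its values through iterated bar constructions. To set up the adjunction, note first that restriction along $i\colon\Fin_\ast\hookrightarrow\cS^{\fin}_\ast$ has a left adjoint $i_!$, that $i^*$ carries $\Sp(\V)=\Exc_{\red}(\cS^{\fin}_\ast,\V)$ into the full subcategory $\Grp_{\EE_\infty}(\V)$ by the discussion preceding the statement (this is the functor $\Omega^\infty$), and that $\Fun(\cS^{\fin}_\ast,\V)$ carries an accessible reduced excisive localization $(-)^{\exc}_{\red}$ since $\V$ is presentable (combine Definition~\ref{d:aff} with the reduced localization). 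One then checks formally that $\mathbf{B}:=(-)^{\exc}_{\red}\circ i_!$, restricted to $\Grp_{\EE_\infty}(\V)$, is left adjoint to $\Omega^\infty$, using the equivalences $\Map_{\Sp(\V)}(\mathbf{B}M,F)\simeq\Map_{\Fun(\cS^{\fin}_\ast,\V)}(i_!M,F)\simeq\Map_{\Fun(\Fin_\ast,\V)}(M,i^*F)\simeq\Map_{\Grp_{\EE_\infty}(\V)}(M,\Omega^\infty F)$ together with the fact that the two subcategories are full and reflective. It then remains to prove that the unit $M\to\Omega^\infty\mathbf{B}M$ is an equivalence and to identify the essential image.

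To do this I would first produce an explicit model for $\mathbf{B}M$. Given a grouplike $\EE_\infty$-monoid $M$, form its bar construction $BM=|\mathrm{Bar}_\bullet(\ast,M,\ast)|$ as in \eqref{e:delooping}; condition (1) of loop space machinery makes this realization compatible with finite products, so the $\EE_\infty$-structure descends and $BM$ becomes a \emph{connected} grouplike $\EE_\infty$-monoid, with $\Omega BM\simeq M$ by condition (2). Iterating produces objects $B^kM$ with structure equivalences $B^kM\stackrel{\sim}{\to}\Omega B^{k+1}M$, and the $\EE_\infty$-coherences assemble $\{B^kM\}_{k\geq 0}$ into a single reduced excisive functor $Y_M\colon\cS^{\fin}_\ast\to\V$ with $Y_M(S^k)\simeq B^kM$, so in particular $i^*Y_M\simeq M$. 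Since each $B^kM$ is $(k-1)$-connected, $Y_M$ is a connective spectrum object, and, using condition (1) again (which makes $\Grp_{\EE_\infty}(\V)$ closed under geometric realizations inside $\Fun(\Fin_\ast,\V)$), one checks that $Y_M$ preserves finite geometric realizations.

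The fully faithfulness now reduces to the comparison $\mathbf{B}M\simeq Y_M$. The identity of $M$ corresponds under $i_!\dashv i^*$ to a map $i_!M\to Y_M$, which factors as $i_!M\to\mathbf{B}M\xrightarrow{\ \varphi\ }Y_M$, the second arrow by the universal property of $(-)^{\exc}_{\red}$ since $Y_M$ is already reduced excisive. As source and target of $\varphi$ lie in $\Sp(\V)$, it is enough to see that $\varphi$ induces an equivalence on mapping spaces into every $F\in\Sp(\V)$: on one side this space is $\Map_{\Grp_{\EE_\infty}(\V)}(M,\Omega^\infty F)$ by the adjunction, and on the other, writing $F$ and $Y_M$ as the $\Omega$-spectrum objects $\{F(S^k)\}$ and $\{B^kM\}$, it is $\lim_k\Map_{\V_\ast}(B^kM,F(S^k))$, which collapses to the same space upon applying the adjunction \eqref{e:delooping} repeatedly together with the structure equivalences $B^kM\simeq\Omega B^{k+1}M$ (condition (2)). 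Hence $\varphi$ is an equivalence, $\mathbf{B}M\simeq Y_M$, and the unit $M\to\Omega^\infty\mathbf{B}M\simeq i^*Y_M\simeq M$ is an equivalence. For the essential image: $\mathbf{B}M\simeq Y_M$ is reduced excisive, hence restricts to a grouplike $\EE_\infty$-monoid, giving \eqref{it:segal}, and preserves finite geometric realizations, giving \eqref{it:preserve sifted}. Conversely, if $F\colon\cS^{\fin}_\ast\to\V$ satisfies \eqref{it:segal} and \eqref{it:preserve sifted}, set $M:=i^*F$; then $M$ is a grouplike $\EE_\infty$-monoid and $F$ is reduced, since $F(\ast)$ is the terminal object by the Segal conditions. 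Every finite pointed space is a finite geometric realization of a simplicial diagram in $\Fin_\ast$ — model it by a finite pointed simplicial set, which is $m$-skeletal for some $m$, hence left Kan extended from $\Delta_{\leq m}^{\op}$, with finite pointed sets as levels — so $\Fin_\ast$ generates $\cS^{\fin}_\ast$ under finite geometric realizations, and consequently restriction to $\Fin_\ast$ is fully faithful on the reduced functors $\cS^{\fin}_\ast\to\V$ that preserve finite geometric realizations. Both $F$ and $\mathbf{B}M$ lie in this subcategory and restrict to $M$, so $F\simeq\mathbf{B}M$; in particular $F$ is reduced excisive and lies in the image of $\mathbf{B}$.

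I expect the main obstacle to be the identification $\mathbf{B}M\simeq Y_M$: assembling the deloopings into a genuine spectrum object (not merely a prespectrum) requires the full $\EE_\infty$-coherence, iterating the bar construction requires condition (1) of loop space machinery, and the mapping-space collapse above is exactly where condition (2) — the equivalence $M\simeq\Omega BM$ — is used. The notion of loop space machinery is tailored precisely to make this comparison work; the remaining ingredients (manipulation of the localizations, the generation of $\cS^{\fin}_\ast$ from $\Fin_\ast$ by finite geometric realizations, and the connectivity bookkeeping) should be routine.
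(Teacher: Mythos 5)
Your formal setup of the adjunction ($\mathbf{B}=(-)^{\exc}_{\red}\circ i_!$) and your observation that $\Fin_\ast$ generates $\cS^{\fin}_\ast$ under finite geometric realizations (this is Lemma \ref{l:left kan extension sifted colim} of the paper) are fine, but the heart of your argument --- the identification $\mathbf{B}M\simeq Y_M$ --- has a genuine gap, in two places. First, the ``collapse'' of $\lim_k\Map_{\V_\ast}(B^kM,F(S^k))$ to $\Map_{\Grp_{\EE_\infty}(\V)}(M,\Omega^\infty F)$ does not follow from applying the adjunction \eqref{e:delooping} repeatedly: that adjunction relates $\Grp(\V)$ (associative grouplike monoids) to $\V_\ast$, so iterating it only rewrites your tower as an interleaving of $\EE_1$-group and pointed mapping spaces between deloopings. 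The $\EE_\infty$-mapping space is computed from the whole $\Fin_\ast$-diagram, not from the underlying pointed object and its deloopings, and identifying it with the limit of your tower is essentially the recognition principle you are trying to prove (in $\V=\cS$ your argument would reprove $\Sp^{\geq 0}\simeq\Grp_{\EE_\infty}(\cS)$ with no real input, which signals that the content has been hidden in this step). Second, your claim that $Y_M$ preserves finite geometric realizations is justified by the closure of $\Grp_{\EE_\infty}(\V)$ under realizations inside $\Fun(\Fin_\ast,\V)$, but that statement concerns the target of $M$, not the behaviour of the assembled excisive functor $Y_M$ in its $\cS^{\fin}_\ast$-variable; preservation of realizations by an $\Omega$-spectrum built from deloopings is a nontrivial connectivity-type statement (in spaces it is the fact that $\Omega^\infty$ of a \emph{connective} spectrum commutes with geometric realization) and is not addressed. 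Note that if you did have this second claim, you would not need the mapping-space argument at all: $Y_M$ and $i_!M$ would both be reduced, realization-preserving and restrict to $M$, so $Y_M\simeq i_!M\simeq\mathbf{B}M$ by the generation lemma. There is also a smaller issue in the assembly of $Y_M$ itself: to iterate you need $B$ to be an endofunctor of $\Grp_{\EE_\infty}(\V)$ with a natural equivalence $\id\simeq\Omega B$, in particular $BM$ must again be grouplike --- ``connected'' has no meaning in a general $\V$, though grouplikeness can be extracted from condition (1) by realizing the shear maps levelwise.

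For comparison, the paper's proof avoids constructing the delooping spectrum by hand: it takes $\mathbf{B}=i_!$ on the nose, so that preservation of finite geometric realizations is automatic from Lemma \ref{l:left kan extension sifted colim}, and the entire content is that $F=i_!M$ is already reduced excisive. This is done by upgrading the oplax monoidal structure on $F$ (from \cite[Proposition 2.4.3.9]{Lur14}) to a symmetric monoidal one using the Segal conditions plus preservation of realizations, lifting $F$ to grouplike $\EE_\infty$-monoids, and then checking excisiveness via the single map $F(S)\lrar\Omega F(\Sigma S)$: writing $\Sigma S$ as a finite realization identifies $F(\Sigma S)$ with the bar construction of $F(S)$, and condition (2) of loop space machinery finishes the argument. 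If you want to salvage your route, the cleanest repair is to drop the mapping-space comparison and instead prove directly (as the paper effectively does, in the other order) that $i_!M$ is excisive, or equivalently to supply a genuine proof that your $Y_M$ preserves finite geometric realizations.
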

\begin{lem}\label{l:left kan extension sifted colim}
Let $i\colon \Fin_*\lrar \cS^{\fin}_\ast$ be the natural fully faithful inclusion. Then restriction and left Kan extension define an adjoint pair
$$\xymatrix{
i_!\colon \Fun(\Fin_\ast, \V)\ar@{^{(}->}@<1ex>[r] & \Fun(\cS_\ast^{\fin}, \V)\colon i^*\ar@<1ex>[l]_-\perp
}$$
whose left adjoint is fully faithful. The essential image of $i_!$ consists exactly of those functors $F\colon \cS_\ast^{\fin}\lrar \V$ that preserve finite geometric realizations.
\end{lem}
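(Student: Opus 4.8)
The plan is to recall the standard fact that left Kan extension along a fully faithful functor is itself fully faithful (the unit $\mathrm{id}\Rightarrow i^*i_!$ is an equivalence), which holds here because $i\colon\Fin_*\hookrightarrow\cS^{\fin}_\ast$ is fully faithful and $\V$ is cocomplete; see \cite[Proposition 4.3.2.15 and Remark 4.3.2.14]{Lur09}. This immediately gives the adjunction $i_!\dashv i^*$ and the full faithfulness of $i_!$. The remaining content is the identification of the essential image, and the key input for that will be the description of $\cS^{\fin}_\ast$ as freely generated from $\Fin_*$ under finite geometric realizations.

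First I would make precise the colimit-generation statement: the inclusion $\Fin_*\hookrightarrow\cS^{\fin}_\ast$ exhibits $\cS^{\fin}_\ast$ as the closure of $\Fin_*$ under finite geometric realizations inside $\cS^{\fin}_\ast$. Indeed, every finite pointed space is a finite colimit of points, hence (after pointing) can be built from finite pointed sets; more precisely, any finite pointed space $S$ is the geometric realization of a simplicial finite pointed set that is $n$-skeletal for some $n$ (take a finite simplicial model of $S$), and such skeletal simplicial objects are exactly the finite geometric realizations in the sense of the statement. Conversely $\Fin_*\subseteq\cS^{\fin}_\ast$ is closed under nothing extra, so $\cS^{\fin}_\ast$ is the smallest full subcategory of itself containing $\Fin_*$ and closed under finite geometric realizations.

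Given this, one direction of the image characterization is formal: the counit $i_!i^*F\to F$ of the adjunction, evaluated on any $S\in\cS^{\fin}_\ast$, computes $F$ at $S$ as a colimit of the values of $F$ on finite pointed sets over $S$; when $F$ preserves finite geometric realizations this colimit is computed as in $\cS^{\fin}_\ast$ and hence the counit is an equivalence at $S$, so $F\simeq i_!i^*F$ lies in the image of $i_!$. For the reverse inclusion, I would show any $F$ in the essential image of $i_!$ preserves finite geometric realizations: writing $F\simeq i_!G=\Lan_i G$ for $G\in\Fun(\Fin_*,\V)$, the functor $\Lan_i G$ is, by the pointwise formula, the left Kan extension along the Yoneda-type embedding, and since $\cS^{\fin}_\ast$ is generated under finite geometric realizations from $\Fin_*$ on which $\Lan_i G$ restricts to $G$, the universal property of this generation (as a colimit completion relative to the class of finite geometric realizations, cf.\ \cite[\S5.3.6]{Lur09} suitably relativized) forces $\Lan_i G$ to preserve precisely those colimits.

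The main obstacle I anticipate is the last point: making rigorous the assertion that $\cS^{\fin}_\ast$ is the colimit completion of $\Fin_*$ with respect to the class of \emph{finite} geometric realizations, and that left Kan extension along $i$ then lands exactly in the functors preserving this class. The subtlety is that the relevant universal property is not the one for all sifted or all filtered colimits but for a restricted class of shapes (skeletal simplicial diagrams), so one cannot quote \cite[Proposition 5.5.8.15]{Lur09} verbatim; instead I would argue directly using that every object of $\cS^{\fin}_\ast$ admits a finite skeletal simplicial resolution by finite pointed sets, that $i^*$ detects equivalences on the essential image of $i_!$, and that the counit is an equivalence on $\Fin_*$ and is compatible with the finite geometric realizations exhibiting a general $S$ from finite pointed sets. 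This reduces the statement to the bookkeeping that left Kan extension commutes with the colimits defining the generation, which is routine once the generation statement is in place.
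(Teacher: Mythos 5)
There is a genuine gap at exactly the point you flag as the "main obstacle", and the fallback you sketch does not close it. The crux of the lemma is the inclusion (image of $i_!$) $\subseteq$ (functors preserving finite geometric realizations), i.e.\ that $i_!G$ preserves finite geometric realizations for every $G\colon \Fin_\ast\lrar \V$. The principle you invoke — that because $\cS^{\fin}_\ast$ is generated from $\Fin_\ast$ under finite geometric realizations, a left Kan extension along $i$ must preserve those colimits — is not valid in general: generation of the target under a class of colimits does not force Kan extensions from the generators to preserve that class (for instance, pointed finite spaces are generated from $S^0$ under finite colimits, yet left Kan extension along $\{S^0\}\hookrightarrow \cS^{\fin}_\ast$ need not preserve them). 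What makes it true here is a specific computation you never use: for $S\in\Fin_\ast$ the corepresentable $\Map_{\cS^{\fin}_\ast}(i(S),-)\simeq (-)^{\times(|S|-1)}$ preserves geometric realizations, since finite products in $\cS$ commute with sifted colimits. The paper packages this by factoring the Yoneda embedding as $\Fin_\ast\x{i}{\lrar}\cS^{\fin}_\ast\x{j}{\lrar}\mathrm{PSh}(\Fin_\ast)$, showing $j$ preserves finite geometric realizations (by the above computation) and is fully faithful (using the finite skeletal resolutions you mention), and then identifying the image of $j_!i_!$ with the colimit-preserving functors via \cite[Lemma 5.1.5.5]{Lur09}; one then writes $i_!G\simeq (j_!i_!G)\circ j$ as a composite of a finite-geometric-realization-preserving functor with a colimit-preserving one. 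Without this (or an equivalent cofinality argument, which again hinges on the corepresentables preserving the relevant colimits), your claim that the "bookkeeping is routine" is unsubstantiated.

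Your argument for the other inclusion is also not correct as written: evaluating the counit via the pointwise formula gives a colimit over the comma category $(\Fin_\ast)_{/S}$, which is not a finite geometric realization, so the hypothesis that $F$ preserves finite geometric realizations does not directly make that colimit compute $F(S)$ (this would amount to an unproven density statement plus preservation of the density colimit). The repair is the one your last paragraph gestures at and the paper carries out: once one knows that $i_!i^\ast F$ preserves finite geometric realizations (i.e.\ the first inclusion), both sides of the counit preserve finite geometric realizations in $T$, they agree on $\Fin_\ast$ by full faithfulness of $i_!$, and every $T\in\cS^{\fin}_\ast$ is a finite geometric realization of finite pointed sets; but note this direction therefore depends on the first one, so the missing ingredient above is what the whole proof stands on.
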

\begin{proof}
Note that $i_!$ is fully faithful because $i$ is. To identify the essential image, let us factor the Yoneda embedding as
$$\xymatrix{
\Fin_*\ar[r]^-{i} & \cS^{\fin}_\ast \ar[r]^-{j} & \mathrm{PSh}(\Fin_*)
}$$
where $j$ sends $T\in \cS^{\fin}_\ast$ to $\Map_{\cS^{\fin}_\ast}(i(-), T)$. Note that for each finite pointed set $S$, $\Map_{\cS^{\fin}_\ast}(i(S), -)$ preserves all finite geometric realizations in $\cS_\ast^{\fin}$ (since it sends $T\mapsto T^{\times |S|-1}$). Consequently, the functor $j$ preserves finite geometric realizations as well. Because every finite pointed set is the geometric realization of some $n$-skeletal simplicial diagram in $\Fin_*$ (and the Yoneda embedding is fully faithful on $\Fin_*$), it follows that $j$ is fully faithful.

We then have a sequence of adjunctions given by restriction and left Kan extension
$$\xymatrix{
\Fun(\Fin_\ast, \V)\ar@{^{(}->}@<1ex>[r]^-{i_!} & \Fun(\cS_\ast^{\fin}, \V)\ar@<1ex>[l]_-\perp^-{i^*} \ar@{^{(}->}@<1ex>[r]^-{j_!} & \Fun\big(\mathrm{PSh}(\Fin_\ast), \V\big)\ar@<1ex>[l]_-\perp^-{j^*}
}$$
where the left adjoints are fully faithful. By \cite[Lemma 5.1.5.5]{Lur09}, the essential image of $j_!i_!$ coincides with those functors $\mathrm{PSh}(\Fin_\ast)\lrar \V$ preserving all colimits. Consequently, the essential image of $i_!$ consists of those functors whose left Kan extension along $j$ defines a colimit-preserving functor $\mathrm{PSh}(\Fin_*)\lrar \V$.

Since $j$ preserves finite geometric realizations, it follows that any functor in the image of $i_!$ preserves finite geometric realizations. Conversely, given $F\colon \cS^{\fin}_\ast\lrar \V$ preserving finite geometric realizations, we have to verify that the counit map 
$$
i_!i^*F(T)\lrar F(T)
$$
is a natural equivalence for $T\in \cS^{\fin}_\ast$. Note that the domain and codomain both preserve finite geometric realizations in $T$. Since each $T$ is the realization of a finite simplicial diagram in $\Fin_*$, we can reduce to the case where $T\in \Fin_*$. But $F$ and $i_!i^*F$ agree on finite pointed sets by construction. 
\end{proof}
\begin{proof}[Proof of Proposition \ref{p:gamma objects as spectra}]
Consider the adjoint pair $(i_!, i^*)$ from Lemma \ref{l:left kan extension sifted colim}. It suffices to verify that for every $A\colon \Fin_\ast\lrar \V$ satisfying the grouplike Segal conditions, the functor $F:=i_!(A)\colon \cS_\ast^{\fin}\lrar\V$ is reduced excisive. Indeed, in that case the adjoint pair $(i_!, i^*)$ simply restricts to spectra and grouplike $\EE_\infty$-monoids, i.e.\ $\mathbf{B}=i_!$ and $\Omega^\infty=i^*$.

To see this, let us first observe that \cite[Proposition 2.4.3.9]{Lur14} (applied to the functor $F^{\op}$) shows that $F\colon \cS^{\fin}_\ast\lrar \V$ has a natural oplax symmetric monoidal structure with respect to $\big(\cS^{\fin}_\ast, \vee\big)$ and $\big(\V, \times\big)$. More specifically, the structure maps of this oplax symmetric monoidal structure are given by the maps 
\begin{equation}\label{d:oplax monoidal map}
F(T_1\vee T_2)\lrar F(T_1)\times F(T_2)
\end{equation}
for $T_1, T_2\in \cS^{\fin}_\ast$, whose first component is induced by the map $T_1\vee T_2\lrar T_1$ collapsing $T_2$ and the second component by the map $T_1\vee T_2\lrar T_2$ collapsing $T_1$. The Segal conditions for $A$ imply that \eqref{d:oplax monoidal map} is an equivalence when $T_1, T_2$ are contained in $\Fin_*$. Since both the domain and codomain of \eqref{d:oplax monoidal map} preserve finite geometric realizations in $T_1$ and $T_2$, it follows that \eqref{d:oplax monoidal map} is a natural equivalence.  In other words, $F$ has a natural symmetric monoidal structure. This implies that $F$ fits into a commuting diagram of functors
$$\xymatrix{
\Alg_{\mathbb{E}_\infty}(\cS^{\fin}_\ast, \vee)\ar[r]^-{\tilde{F}}\ar[d]_{\mathrm{forget}}^\sim & \Mon_{\mathbb{E}_\infty}(\V)\ar[d]^{\mathrm{forget}}\\
\cS^{\fin}_\ast\ar[r]_-{F} & \V.
}$$
Here the left vertical functor is an equivalence by  \cite[Proposition 2.4.3.9]{Lur14}. In fact, the functor $\tilde{F}$ takes values in the full sub-$\infty$-category $\Grp_{\mathbb{E}_\infty}(\V)\subseteq \Mon_{\mathbb{E}_\infty}(\V)$ of grouplike $\mathbb{E}_\infty$-monoids in $\V$. Indeed, since $\tilde{F}$ preserves finite geometric realizations (which are computed on the underlying object) and $\Grp_{\mathbb{E}_\infty}(\V)\subseteq \Mon_{\mathbb{E}_\infty}(\V)$ is closed under finite geometric realizations, it suffices to verify that $\tilde{F}(S)\in \Grp_{\mathbb{E}_\infty}(\V)$ for $S\in \Fin_*$. In this case, this follows from the (grouplike) Segal conditions.

To see that $F$ is reduced excisive, it now suffices to verify that $\tilde{F}$ is reduced excisive (since the forgetful functor $\mathrm{Grp}_{\mathbb{E}_\infty}(\V)\lrar \V$ preserves all finite limits). Clearly $\tilde{F}$ is reduced. To see that it is excisive, it suffices to verify that for every $S\in \cS^{\fin}_\ast$, the natural map
\begin{equation}\label{e:excision via suspension}\xymatrix{
\tilde{F}(S)\ar[r] & \Omega \tilde{F}(\Sigma S)
}\end{equation}
is an equivalence \cite[Proposition 1.4.2.13]{Lur14}. 
Using the simplicial model for the $S^1$ given by $\Delta^1/\partial\Delta^1$ and the fact that $\tilde{F}$ preserves finite sifted colimits, one can compute $\tilde{F}(\Sigma S)$ as the geometric realization
$$\xymatrix{
\tilde{F}\big(\Sigma S\big) & \ar[l] \tilde{F}(\ast) & \ar@<0.5ex>[l]\ar@<-0.5ex>[l] \tilde{F}(S) & \tilde{F}(S\vee S)\ar@<-1ex>[l]\ar[l]\ar@<1ex>[l] & \dots \ar@<0.5ex>[l]\ar@<-0.5ex>[l]\ar@<1.5ex>[l]\ar@<-1.5ex>[l]
}$$
This is precisely the bar construction of (the underlying grouplike monoid of) $\tilde{F}(S)$. The map \eqref{e:excision via suspension} can therefore be identified with the map $\tilde{F}(S)\lrar \Omega B(\tilde{F}(S))$, which is an equivalence since $\V$ has loop space machinery.
\end{proof}

\subsection{Monoidal structure on the tangent bundle}
Our next goal will be to construct a (closed) symmetric monoidal structure on the tangent bundle $\T\V$ of a closed symmetric monoidal presentable $\infty$-category. To this end, recall recall that if $\V$ is a symmetric monoidal $\infty$-category and $\I$ is an $\infty$-category, then the objectwise tensor product endows $\Fun(\I, \V)$ with the natural structure of a symmetric monoidal $\infty$-category. For every $f\colon \I\lrar \J$, the restriction functor $f^*\colon \Fun(\J, \V)\lrar \Fun(\I, \V)$ is a symmetric monoidal functor. 

\begin{pro}\label{p:tens}
Let $\V$ be a presentable SM $\infty$-category and endow $\Fun(\cS^{\fin}_\ast, \V)$ with the levelwise tensor product $\otimes_{\lev}$. Then the localization of $\Fun(\cS^{\fin}_\ast, \C)$ at the $\T\V$-local equivalences is monoidal. In particular, there exists a closed symmetric monoidal structure on $\T\V$ given by
$$
X\otimes Y = \big(X\otimes_{\lev} Y\big)^{\exc}.
$$
\end{pro}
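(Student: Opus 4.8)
The plan is to apply a standard localization principle for symmetric monoidal $\infty$-categories: if $\V$ is a presentable SM $\infty$-category, $\mathcal{E}$ is a presentable $\infty$-category, and $L\colon \mathcal{E} \to \mathcal{E}$ is a localization functor onto a full reflective subcategory $\mathcal{E}_0$, then $\mathcal{E}_0$ inherits a (closed, presentable) symmetric monoidal structure from a monoidal structure on $\mathcal{E}$ provided the class $W$ of $L$-local equivalences is \emph{compatible with the tensor product}, i.e.\ $W$ is closed under tensoring with an arbitrary object. This is \cite[Proposition 4.1.7.4]{Lur14} (together with \cite[Proposition 2.2.1.9]{Lur14} for the closed part). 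So the entire proof reduces to verifying the single compatibility condition: for every $\T\V$-local equivalence $u\colon X \to Y$ in $\Fun(\cS^{\fin}_\ast, \V)$ and every $Z \in \Fun(\cS^{\fin}_\ast, \V)$, the map $u \otimes_{\lev} \id_Z$ is again a $\T\V$-local equivalence.

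First I would reduce to a generating set of $\T\V$-local equivalences. By Remark \ref{r:generating local equiv}, the $\T\V$-local equivalences are strongly generated (i.e.\ form the smallest strongly saturated class) by the maps
$$
\big(h_{S_1}\coprod_{h_{S_3}} h_{S_2}\big)\otimes C_\alpha \lrar h_{S_0}\otimes C_\alpha
$$
indexed by a pushout square $S_0 \to S_1, S_2 \to S_3$ in $\cS^{\fin}_\ast$ and a generator $C_\alpha$ of $\V$. Since the class of maps $u$ with $u \otimes_{\lev} \id_Z$ a $\T\V$-local equivalence (for all $Z$) is itself strongly saturated — tensoring $-\otimes_{\lev} Z$ is a colimit-preserving functor, so it preserves the pushouts, transfinite compositions and retracts that generate the saturated class, and the $\T\V$-local equivalences are themselves closed under these — it suffices to treat the generators. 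Moreover one may first reduce to $Z$ ranging over a generating set of $\Fun(\cS^{\fin}_\ast,\V)$, e.g.\ the functors $h_T \otimes D$ for $T \in \cS^{\fin}_\ast$ and $D$ a generator of $\V$, again because $-\otimes_{\lev} u'$ is colimit-preserving in the first variable for fixed $u'$.

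The computational heart is then the following: for corepresentables one has $(h_S \otimes C)\otimes_{\lev}(h_T \otimes D) \simeq h_S \otimes_{\lev} h_T \otimes (C\otimes D)$, and $h_S \otimes_{\lev} h_T$ — the levelwise (Day-type) product of corepresentables on $\cS^{\fin}_\ast$ — is computed by the coend formula and, because $\cS^{\fin}_\ast$ has finite products (it is pointed with finite colimits, and $\vee$-monoidal, but here we use that $\Map(S,-)\otimes\Map(T,-)\simeq\Map(S,-)\times\Map(T,-)$ as a presheaf, which is corepresented by $S \times T$ — or rather one works with the mapping-space description directly). Concretely, $(h_S \otimes_{\lev} h_T)(U) \simeq \Map(S,U)\times\Map(T,U)$, so tensoring the pushout-square generator with $h_T \otimes D$ produces, levelwise at $U$, the map
$$
\Big(\Map(S_1,U)\coprod_{\Map(S_3,U)}\Map(S_2,U)\Big)\times\Map(T,U) \otimes (C\otimes D) \lrar \Map(S_0,U)\times\Map(T,U)\otimes (C\otimes D),
$$
and since $-\times\Map(T,U)$ distributes over the pushout of spaces, this is again of the form of a generating $\T\V$-local equivalence (built from the pushout square $S_i \mapsto S_i$ mapped appropriately — one needs $\Map(S_i,-)\times\Map(T,-)$ to be corepresentable, which requires a small argument since $\cS^{\fin}_\ast$ need not have products of \emph{all} objects; the fix is to observe these are filtered colimits of corepresentables, or to pass through $\mathrm{PSh}(\cS^{\fin}_\ast)$ and note that the excisive-localization only depends on the behavior after applying the left adjoint $(-)^{\exc}$, which kills exactly the relations generated by pushout squares). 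The main obstacle is precisely this point: verifying that levelwise-multiplying a generating local equivalence by a corepresentable stays within (the saturated class generated by) the generating local equivalences, which amounts to the fact that pushout squares in $\cS^{\fin}_\ast$ are stable under the (objectwise) product with a fixed space — true because $\mathcal{S}$ is cartesian closed — combined with keeping track of corepresentability. Once that is in hand, the closed structure is automatic since $\T\V$ is presentable and the tensor product preserves colimits in each variable, and the explicit formula $X \otimes Y = (X \otimes_{\lev} Y)^{\exc}$ is just the description of the tensor product transported across the localization.
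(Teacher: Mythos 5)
Your overall strategy is the same as the paper's: invoke \cite[Proposition 4.1.7.4]{Lur14}, reduce by strong saturation to the generating $\T\V$-local equivalences and to corepresentable $X=h_T\otimes D$, and then compute the levelwise tensor product of corepresentables. The gap lies in the key identification. You assert that $\Map(S,-)\times\Map(T,-)$ is ``corepresented by $S\times T$'' and then treat the corepresentability of these products as the main obstacle, to be repaired by filtered colimits of corepresentables or a detour through $\mathrm{PSh}(\cS^{\fin}_\ast)$. This is backwards: for \emph{co}representable functors the product is corepresented by the \emph{coproduct}, i.e.\ $\Map_{\cS^{\fin}_\ast}(S,U)\times\Map_{\cS^{\fin}_\ast}(T,U)\simeq\Map_{\cS^{\fin}_\ast}(S\vee T,U)$, so that $(h_S\otimes C)\otimes_{\lev}(h_T\otimes D)\simeq h_{S\vee T}\otimes(C\otimes D)$. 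Equivalently, $\otimes_{\lev}$ is the Day convolution product on $\Fun(\cS^{\fin}_\ast,\V)$ associated to $(\cS^{\fin}_\ast,\vee)$ and $(\V,\otimes)$, which is exactly the identification the paper uses (via \cite[Theorem 2.4.3.18]{Lur14}). There is no corepresentability problem to fix, and the product $S\times T$ plays no role.

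Once this is corrected, your levelwise computation does finish the argument, and for a simpler reason than the one you give: tensoring the generator attached to a pushout square $(S_0,S_1,S_2,S_3)$ with $h_T\otimes D$ yields the map $\big(h_{T\vee S_1}\coprod_{h_{T\vee S_3}}h_{T\vee S_2}\big)\otimes(C\otimes D)\to h_{T\vee S_0}\otimes(C\otimes D)$, which is again (a colimit of) generating local equivalences simply because $T\vee(-)$, being a colimit, preserves pushout squares in $\cS^{\fin}_\ast$; no appeal to cartesian closedness of $\cS$ is needed. As written, however, your concluding step --- that the levelwise-tensored map lies in the strongly saturated class generated by the stated generators --- is not actually justified: the levelwise formula you display is correct, but without recognizing $\Map(S_i,-)\times\Map(T,-)$ as $h_{S_i\vee T}$, the proposed workarounds are too vague to identify this map with a generating local equivalence, so the proof as it stands has a genuine hole at precisely the point that carries the content of the proposition.
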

\begin{proof}
By \cite[Proposition 4.1.7.4]{Lur14}, it suffices to verify that $X\otimes Y\lrar X\otimes Y'$ is a $\T\V$-local equivalence for every $X: \cS^{\fin}_\ast\lrar \V$ and every $\T\V$-local equivalence $Y\lrar Y'$. Since the $\T\V$-local equivalences are closed under colimits and $\otimes_\text{lev}$ preserves colimits in each variable, we may assume that $Y\lrar Y'$ is a generating local equivalence of the form \eqref{d:generating local equiv} and that $X=h_T\otimes D$. Now note that $\otimes_\text{lev}$ is the monoidal structure obtained by applying Day convolution \cite[Example 2.2.6.17]{Lur14} to $(\cS^{\fin}_\ast, \vee)$ and $(\V, \otimes)$ (cf.\ \cite[Theorem 2.4.3.18]{Lur14}). Consequently, it suffices to show that the map
$$
\big(h_{T\vee S_1}\coprod_{h_{T\vee S_3}} h_{T\vee S_2}\big)\otimes (D\otimes C) \lrar h_{T\vee S_0}\otimes (D\otimes C)
$$
is a $\T\V$-local equivalence. This is obvious since
$$\xymatrix{
T\vee S_0\ar[r]\ar[d] & T\vee S_1\ar[d]\\
T\vee S_2\ar[r] & T\vee S_3
}$$
is a pushout square in $\cS^{\fin}_\ast$.
\end{proof}
\begin{rem}\label{r:tensoring}
Evaluation at $\ast$ defines a colimit-preserving functor $\Fun(\cS^{\fin}_\ast,\V)\lrar \V$ which is symmetric monoidal for the levelwise tensor product and sends $\T\V$-local equivalences to equivalences in $\V$. It follows that $\pi\colon \T\V\lrar \V$ is symmetric monoidal for $\otimes$ as well. Its left adjoint is also symmetric monoidal, so that $\T\V$ is tensored over $\V$ via the formula
$$
C\otimes X = (C\otimes_\text{lev} X(-))^{\text{aff}}.
$$
Furthermore, note that the functor $\Om^\infty: \T\V\lrar \V$ is lax symmetric monoidal, being the composite of the lax symmetric monoidal inclusion $\T\V\lrar \Fun(\cS^{\fin}_\ast, \V)$ and the symmetric monoidal functor $\ev_{S^0}\colon \Fun(\cS^{\fin}_\ast, \V)\lrar \V$ (for the levelwise tensor product on the domain). On the other hand, $\Sigma^\infty_+$ is not lax monoidal.
\end{rem}
\begin{rem}\label{r:via PrL}
Let $\C$ be a SM $\infty$-category and let $\textbf{1}\lrar \C$ be the symmetric monoidal functor given by the inclusion of the unit object. By functoriality of the Day convolution product, for any presentable SM $\infty$-category $\V$, this map and the unit map $\cS\lrar \V$ in $\CAlg(\Pr^\mathrm{L})$ induce a commuting square in $\CAlg(\Pr^\mathrm{L})$
$$\xymatrix{
\cS\ar[r]\ar[d] & \V\ar[d]\\
\big(\Fun(\C, \cS), \otimes_{\mathrm{Day}}\big)\ar[r] & \big(\Fun(\C, \V), \otimes_{\mathrm{Day}}\big).
}$$
It follows that there is a symmetric monoidal functor $\V\otimes \Fun(\C, \cS)\lrar \Fun(\C, \V)$, whose underlying functor coincides with the equivalence from \cite[Proposition 4.8.1.17]{Lur14}. 

Now let $\T\cS\in \CAlg(\Pr^\mathrm{L})$ be the tangent bundle to spaces, endowed with the SM-structure from Proposition \ref{p:tens}. Since the monoidal structure on $\T\cS$ arises as a monoidal localization of $\Fun(\cS^{\fin}_\ast, \cS)$ with the Day convolution product, applying the above observation to $\C=\cS^{\fin}_\ast$, it follows that for any other $\V\in \CAlg(\Pr^\mathrm{L})$, one can identify $\T\V\simeq \T\cS\otimes \V$ in $\CAlg(\Pr^\mathrm{L})$ (cf.\ \cite[Remark 4.8.1.13]{Lur14}).
\end{rem}
Note that for any functor $X\colon \cS^{\fin}_\ast\lrar \V$, there is a canonical (counit) map $X(\ast)\lrar X$, where we consider $X(\ast)\in\V$ as a constant diagram.
\begin{lem}\label{lem:pushout product}
Let $X, Y\colon \cS^{\fin}_\ast\lrar \V$ be two functors. Then the pushout-product map
$$\xymatrix{
\psi(X, Y)\colon X(\ast)\otimes_{\lev} Y\coprod_{X(\ast)\otimes_{\lev} Y(\ast)} X\otimes_{\lev}Y(\ast)\ar[r] & X\otimes_{\lev} Y
}$$
is a $\T\V$-local equivalence.
\end{lem}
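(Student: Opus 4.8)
The plan is to reduce the pushout-product statement to the generating $\T\V$-local equivalences of Remark~\ref{r:generating local equiv}, using the fact that both $\otimes_{\lev}$ and the class of $\T\V$-local equivalences are compatible with colimits in each variable. First I would observe that, since $\otimes_{\lev}$ is the Day convolution product for $(\cS^{\fin}_\ast, \vee)$ and $(\V, \otimes)$, it preserves colimits separately in each variable; moreover the $\T\V$-local equivalences are strongly generated by the maps \eqref{d:generating local equiv}, and both the pushout-product construction $\psi(-, -)$ and the class of $\T\V$-local equivalences are closed under colimits in each of the two inputs (a pushout-product of a colimit is the colimit of the pushout-products, and $\T\V$-local equivalences are closed under colimits by Definition~\ref{d:aff} together with \cite[Proposition 5.5.4.15]{Lur09}). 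Hence it suffices to treat the case where $X = h_{S}\otimes C$ and $Y = h_{T}\otimes D$ are corepresentable functors tensored with generators of $\V$, because every functor $\cS^{\fin}_\ast\lrar\V$ is a colimit of such.

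In that case I would compute both sides explicitly. For a corepresentable $h_S\otimes C$, its value at $\ast$ is $\Map(S, \ast)\otimes C$, which is $C$ if $S = \ast$ and is the initial object otherwise; more precisely, the counit $(h_S\otimes C)(\ast)\lrar h_S\otimes C$ is the map $h_\ast\otimes C\lrar h_S\otimes C$ induced by the (unique) pointed map $\ast\lrar S$ collapsing everything. Since $h_{S}\otimes_{\lev} h_{T} \simeq h_{S\vee T}$ by the monoidality of Day convolution (with the tensor of the coefficient objects in $\V$), the pushout-product map $\psi(h_S\otimes C, h_T\otimes D)$ becomes, after tensoring with $C\otimes D$, the map
$$\xymatrix{
h_{T}\coprod_{h_{\ast}} h_{S}\ar[r] & h_{S\vee T}
}$$
of corepresentable functors (tensored with $C\otimes D$), induced by the square whose corner is $\ast$, with legs $\ast\to S$ and $\ast\to T$, and pushout $S\vee T$. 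But this square
$$\xymatrix{
\ast\ar[r]\ar[d] & S\ar[d]\\
T\ar[r] & S\vee T
}$$
is a pushout square in $\cS^{\fin}_\ast$, so the displayed map is precisely a generating $\T\V$-local equivalence of the form \eqref{d:generating local equiv} (with $S_0 = \ast$, $S_1 = S$, $S_2 = T$, $S_3 = S\vee T$, and coefficient object $C\otimes D$), hence a $\T\V$-local equivalence. This is the same mechanism already used in the proof of Proposition~\ref{p:tens}.

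The main subtlety to be careful about is the reduction step: one must check that the pushout-product functor $(X, Y)\mapsto \psi(X, Y)$ genuinely commutes with colimits in each variable, and — more importantly — that a map which is a colimit (in the arrow category) of $\T\V$-local equivalences is again a $\T\V$-local equivalence. The latter holds because $(-)^{\exc}$ is a left adjoint and therefore preserves colimits, so a colimit of equivalences $X_i^{\exc}\xrightarrow{\sim}Y_i^{\exc}$ yields an equivalence on $\exc$-localizations; combined with the strong generation statement of Remark~\ref{r:generating local equiv}, this lets us restrict to corepresentables. Once the reduction is in place the computation is purely formal, driven entirely by the monoidality of Day convolution and the identification of the relevant square as a pushout in $\cS^{\fin}_\ast$.
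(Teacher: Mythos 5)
Your reduction to corepresentables and the identification of the pushout-product map with
$$\big(h_{S}\coprod_{h_{\ast}} h_{T}\big)\otimes (C\otimes D)\lrar h_{S\vee T}\otimes (C\otimes D)$$
are correct and match the paper's argument. The problem is the last step, where you recognize this map as a generating $\T\V$-local equivalence: you invoke the square with initial corner $\ast$, legs $\ast\to S$, $\ast\to T$ and pushout $S\vee T$, with the labelling $S_0=\ast$, $S_1=S$, $S_2=T$, $S_3=S\vee T$. But the assignment $S\mapsto h_S=\Map(S,-)$ is \emph{contravariant}, and the generating map \eqref{d:generating local equiv} attached to a pushout square $S_0\to S_1$, $S_0\to S_2$, $S_3=S_1\coprod_{S_0}S_2$ is $\big(h_{S_1}\coprod_{h_{S_3}}h_{S_2}\big)\otimes C\lrar h_{S_0}\otimes C$, with the pushout taken over the corepresentable of the \emph{final} corner and target the corepresentable of the \emph{initial} corner. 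With your labelling this would be $\big(h_S\coprod_{h_{S\vee T}}h_T\big)\otimes(C\otimes D)\lrar h_\ast\otimes(C\otimes D)$, which is not the map at hand. The square you actually need is the one with initial corner $S\vee T$, the two collapse maps $S\vee T\to S$ and $S\vee T\to T$, and pushout $\ast$; this is the square the paper uses (written as $S\vee T\to S\vee\ast$, $\ast\vee T\to\ast\vee\ast$), and its coCartesianness is a small but genuine point to check (it follows, e.g., because it is the objectwise wedge of two evidently coCartesian squares). The same variance confusion shows up earlier: $(h_S\otimes C)(\ast)=\Map(S,\ast)\otimes C\simeq C$ for \emph{every} $S$ (since $\ast$ is terminal), not the initial object for $S\neq\ast$, and the counit $h_\ast\otimes C\to h_S\otimes C$ is induced by the collapse map $S\to\ast$, not by $\ast\to S$. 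Your identification of the counit is nevertheless correct, so the proof is repaired simply by replacing the square in the final step and verifying it is a pushout; as written, however, the concluding identification does not establish the claim.
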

\begin{proof}
Suppose that $X=\colim X_i$ for some diagram of functors $X_i$. Since evaluation and taking the constant diagram preserve colimits, we can identify the pushout-product map $\psi(X, Y)$ with the colimit $\colim_i \psi(X_i, Y)$ in the arrow category of $\Fun(\cS^{\fin}_\ast, \V)$. As $\T\V$-local equivalences are stable under colimits, we can therefore reduce to the case where $X=h_S\otimes C$ and $Y=h_T\otimes D$ are corepresentables.

Using that the constant diagram on $X(\ast)$ is given by $h_\ast\otimes X(\ast)$, the pushout-product map can then be identified with
$$\xymatrix{
h_T\otimes (C\otimes D)\coprod\limits_{h_\ast\otimes (C\otimes D)}h_S\otimes(C\otimes D)\ar[r] & (h_{S}\otimes C)\otimes_\text{lev} (h_{T}\otimes D).
}$$
Since the levelwise tensor product agrees with the Day convolution product (cf.\ \cite[Theorem 2.4.3.18]{Lur14}), the codomain can be identified with $h_{S\vee T}\otimes(C\otimes D)$. The above map is then a $\T\V$-local equivalence because
$$\xymatrix{
S\vee T\ar[r]\ar[d] & S\vee \ast\ar[d]\\
\ast\vee T\ar[r] & \ast\vee \ast
}$$
is a coCartesian square (see Remark \ref{r:generating local equiv}).
\end{proof}
The above lemma can be described somewhat informally as follows: we can identify an object of $\T\V$ with a tuple consisting of $C\in \V$ and $E\in \Sp(\V_{/C})$. Using the tensoring of $\T\V$ over $\V$ from Remark \ref{r:tensoring}, we then have an equivalence
\begin{equation}\label{e:square zero tensor}
(C, E)\otimes (D, F)\simeq \big(C\otimes D, (C\otimes F)\oplus (E\otimes D)\big)
\end{equation}
where the direct sum is taken in the fiber $\T_{C\otimes D}\V$. This justifies the following terminology:
\begin{define}\label{d:square zero tensor product}
Let $\V$ be a presentable SM $\infty$-category. The \textbf{square zero tensor product} on $\T\V$ is the symmetric monoidal structure provided by Proposition \ref{p:tens}.
\end{define}
\begin{pro}\label{p:tangent to algebras}
Let $\V$ be a presentable SM $\infty$-category and let $\O$ be an $\infty$-operad. Then there is an equivalence of $\infty$-categories
$$
\Alg_{\O}(\T\V)\simeq \T(\Alg_\O(\V))
$$
where $\T\V$ is endowed with the square zero monoidal structure.
\end{pro}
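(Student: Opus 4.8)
The plan is to identify $\Alg_\O(\T\V)$ with $\T(\Alg_\O(\V))$ by combining two facts: first, that the tangent bundle is insensitive to passing to algebras in a suitable sense (because $\Exc(\cS^{\fin}_\ast,-)$ commutes with taking $\O$-algebras, the latter being a limit-type construction preserved by functor categories), and second, that the square zero monoidal structure on $\T\V$ is precisely the one needed to make this identification symmetric-monoidally coherent. Concretely, I would first observe that since $\Alg_\O(-)$ can be computed as a full subcategory of $\Fun(\O, -)$ cut out by limit conditions (the Segal-type/operadic conditions), and since $\Fun(\I,-)$ commutes with $\Fun(\cS^{\fin}_\ast,-)$, there is a natural equivalence $\Fun(\cS^{\fin}_\ast, \Alg_\O(\V)) \simeq \Alg_\O(\Fun(\cS^{\fin}_\ast,\V))$, where on the right $\Fun(\cS^{\fin}_\ast,\V)$ carries the levelwise tensor product $\otimes_{\lev}$ (this is where Proposition \ref{p:tens} and the interpretation of $\otimes_{\lev}$ as Day convolution enter). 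Under this equivalence, the excisive functors on the left correspond exactly to those $\O$-algebras in $\Fun(\cS^{\fin}_\ast,\V)$ whose underlying functor is excisive, because excision is detected on underlying objects and $\Alg_\O(\V) \to \V$ preserves and detects limits (in particular pullbacks).

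Next I would upgrade this to the monoidal localization. By Proposition \ref{p:tens}, $\T\V$ is the localization of $(\Fun(\cS^{\fin}_\ast,\V),\otimes_{\lev})$ at the $\T\V$-local equivalences, and this localization is compatible with the monoidal structure in the sense of \cite[Proposition 4.1.7.4, Proposition 2.2.1.9]{Lur14}; such compatible localizations pass to $\O$-algebras, giving $\Alg_\O(\T\V)$ as the corresponding localization of $\Alg_\O(\Fun(\cS^{\fin}_\ast,\V))$. Combining with the previous paragraph and the fact that $\Alg_\O$ commutes with reflective localizations whose local equivalences are detected on underlying objects, I get
$$
\Alg_\O(\T\V) \simeq \Alg_\O\big(\Fun(\cS^{\fin}_\ast,\V)\big)[W^{-1}] \simeq \Fun\big(\cS^{\fin}_\ast, \Alg_\O(\V)\big)[W^{-1}] \simeq \Exc\big(\cS^{\fin}_\ast, \Alg_\O(\V)\big) = \T(\Alg_\O(\V)),
$$
where in the middle step one checks that the $\T\V$-local equivalences in $\Alg_\O(\Fun(\cS^{\fin}_\ast,\V))$ correspond, under the first equivalence, to those maps in $\Fun(\cS^{\fin}_\ast, \Alg_\O(\V))$ that become equivalences after applying $(-)^{\exc}$ composed with the forgetful functor — i.e. the $\T(\Alg_\O(\V))$-local equivalences — again using that excisive approximation and local equivalences are computed on underlying objects (Remark \ref{r:generating local equiv}).

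The step I expect to be the main obstacle is making the identification of the two classes of local equivalences fully precise, i.e. verifying that the reflective localization $\Fun(\cS^{\fin}_\ast,-) \rightsquigarrow \Exc(\cS^{\fin}_\ast,-)$ interacts correctly with $\Alg_\O$ at the level of the localization functors, not just the underlying $\infty$-categories. This requires knowing that the excisive-approximation functor $(-)^{\exc}$ on $\Fun(\cS^{\fin}_\ast, \Alg_\O(\V))$ is computed by applying $(-)^{\exc}$ to the underlying $\cS^{\fin}_\ast$-diagram and re-equipping it with its $\O$-algebra structure — which in turn needs that $(-)^{\exc}$ is compatible with the monoidal structure (so that it preserves $\O$-algebras); this is exactly the content of Proposition \ref{p:tens}. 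One subtlety worth flagging: in general $(-)^{\exc}$ need not preserve all limits, so detecting $\O$-algebra structures requires care; here it suffices that $\Alg_\O(\V) \to \V$ creates the relevant limits and sifted colimits and that the local equivalences form a strongly saturated class generated by maps between corepresentables (Remark \ref{r:generating local equiv}), which lets one reduce all compatibility checks to the generators, exactly as in the proof of Proposition \ref{p:tens}. Alternatively — and perhaps more cleanly — one can deduce the result directly from Remark \ref{r:via PrL}: the identification $\T\V \simeq \T\cS \otimes \V$ in $\CAlg(\Pr^{\mathrm L})$ gives $\Alg_\O(\T\V) \simeq \Alg_\O(\T\cS\otimes\V)$, and since tensoring with a fixed object of $\CAlg(\Pr^{\mathrm L})$ commutes with $\Alg_\O(-)$ (as $\Alg_\O$ is computed by a limit and $\Pr^{\mathrm L}$-tensor preserves such limits), this becomes $\T\cS \otimes \Alg_\O(\V) \simeq \T(\Alg_\O(\V))$, applying Remark \ref{r:via PrL} once more.
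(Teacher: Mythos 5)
Your main argument is correct and, at its core, parallel to the paper's: both rest on the swap equivalence $\Alg_{\O}\big(\Fun(\cS^{\fin}_\ast,\V),\otimes_{\lev}\big)\simeq \Fun\big(\cS^{\fin}_\ast,\Alg_{\O}(\V)\big)$ (which the paper obtains from the symmetry of the Boardman--Vogt tensor product) together with the fact that excisiveness is detected on underlying objects, since $\Alg_{\O}(\V)\lrar\V$ detects limits. Where you differ is in how the square zero structure enters: you realize $\Alg_{\O}(\T\V)$ as a localization of $\Alg_{\O}\big(\Fun(\cS^{\fin}_\ast,\V)\big)$ via the compatible-localization machinery for algebras, and then worry about matching the two classes of local equivalences. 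The paper avoids this entirely: since the fully faithful inclusion $\T\V\hookrightarrow\Fun(\cS^{\fin}_\ast,\V)$ is lax symmetric monoidal, it exhibits $\T\V^{\otimes}$ as a full suboperad, so $\Alg_{\O}(\T\V)$ is simply the full subcategory of $\Alg_{\O}\big(\Fun(\cS^{\fin}_\ast,\V)\big)$ on algebras whose underlying functor is excisive, and under the swap this is precisely $\Exc\big(\cS^{\fin}_\ast,\Alg_{\O}(\V)\big)$. The step you flag as the main obstacle is in fact unnecessary even on your route: the proposition only asserts an equivalence of $\infty$-categories, and two reflective localizations of the same $\infty$-category coincide as soon as their full subcategories of local objects coincide --- which is exactly the ``underlying-excisive'' identification above; there is no need to compare excisive-approximation functors or to know that $(-)^{\exc}$ is computed on underlying diagrams.

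By contrast, the ``cleaner alternative'' at the end does not work. Tensoring with a fixed object of $\CAlg(\Pr^{\mathrm{L}})$ is a left adjoint in each variable, so it preserves colimits, not limits, and it does not commute with $\Alg_{\O}(-)$: for instance $\Alg_{\EE_\infty}(\Sp\otimes\Sp)\simeq\Alg_{\EE_\infty}(\Sp)$, whereas $\Sp\otimes\Alg_{\EE_\infty}(\Sp)\simeq\Sp\bigl(\Alg_{\EE_\infty}(\Sp)\bigr)$ is stable and hence not equivalent to $\Alg_{\EE_\infty}(\Sp)$, which has no zero object. The special commutation $\Alg_{\O}(\T\cS\otimes\V)\simeq\T\cS\otimes\Alg_{\O}(\V)$ is, given Remark \ref{r:via PrL}, essentially a restatement of the proposition itself, so that route is circular rather than a shortcut.
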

\begin{proof}
It follows from the commutativity of the Boardman--Vogt tensor product \cite[Proposition 2.2.5.13]{Lur14} that there is an equivalence of symmetric monoidal $\infty$-categories
$$
\Alg_\O\big(\Fun(\cS^{\fin}_\ast, \V), \otimes_{\lev}\big) \simeq  \Fun(\cS^{\fin}_\ast, \Alg_{\O}(\V)).
$$
	This equivalence identifies excisive diagrams of $\O$-algebras in $\V$ with levelwise $\O$-algebras in $\Fun(\cS^{\fin}_\ast, \V)$ whose underlying functor is excisive. The fully faithful functor $\T\V\lrar \Fun(\cS^{\fin}_\ast, \V)$ is lax symmetric monoidal and hence realizes $\T\V^{\otimes}$ as a full suboperad of the $\infty$-operad $\Fun(\cS^{\fin}_\ast, \V)^{\otimes}$. It follows that $\O$-algebras in $\T\V$ are equivalent to $\O$-algebras in $\Fun(\cS^{\fin}_\ast, \V)$ whose underlying functor is excisive. In turn, these are equivalent to functors $\cS^{\fin}_\ast\lrar \Alg_{\O}(\V)$ that are excisive, because the forgetful functor from $\O$-algebras to $\V$ detects limits \cite[Corollary 3.2.2.4]{Lur14}.
\end{proof}
The square zero monoidal structure can be made more explicit in the case where $\V$ is a stable presentable monoidal $\infty$-category, using the following:
\begin{define}
Let $\D$ be a presentable SM $\infty$-category. We will say that an object $D\in \D$ is \textbf{square zero} if the canonical map $\emptyset\lrar D\otimes D$ is an equivalence and denote by $\SqZ(\D)\subseteq \D$ the full subcategory on the square zero objects. Note that every symmetric monoidal left adjoint $F\colon \D\lrar \D'$ restricts to a natural map $F\colon \SqZ(\D)\lrar \SqZ(\D')$.
\end{define}
Recall that the $\infty$-category of \textbf{$\V$-linear SM $\infty$-categories} is given by the $\infty$-category $\CAlg_\V(\Pr^{\mathrm{L}})\simeq \CAlg(\Pr^{\mathrm{L}})_{\V/}$ of presentable SM $\infty$-categories $\D$ equipped with a symmetric monoidal left adjoint functor $\V\lrar \D$.
\begin{define}
Let $\W$ be a $\V$-linear SM $\infty$-category together with a square zero object $M\in \W$. We say that this exhibits $\W$ as the \textbf{free $\V$-algebra on a square zero object} if evaluation at $M$ defines a natural equivalence
$$
\ev_{M}\colon \Fun^{\otimes}_\V(\W, \D)\lrar\SqZ(\D).
$$
\end{define}
\begin{rem}\label{r:universal preserved by tensor}
Consider a pushout square in $\CAlg(\Pr^\mathrm{L})$
$$\xymatrix{
\V_1\ar[r]\ar[d] & \V_2\ar[d]\\
\W_1\ar[r]_{f} & \W_2.
}$$ 
If $M\in \W_1$ exhibits $\W_1$ as the free $\V_1$-algebra on a square zero object, then $f(M)$ exhibits $\W_2\simeq \V_2\otimes_{\V_1}\W_1$ as a free $\V_2$-algebra on a square zero object: indeed, the evaluation at $f(M)$ factors as two equivalences
$$\xymatrix{
\ev_{f(M)}\colon \Fun^{\otimes}_{\V_2}(\V_2\otimes_{\V_1} \W_1, \D)\ar[r]_-\sim^-{f^*} & \Fun^{\otimes}_{\V_1}(\W_1, \D)\ar[r]_-\sim^-{\ev_M} & \SqZ(\D).
}$$
\end{rem}
\begin{lem}\label{lem:free square zero algebra}
For every presentable SM $\infty$-category $\V$, there exists a free $\V$-algebra $\V[\epsilon]$ on a square zero object.
\end{lem}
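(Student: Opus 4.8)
The plan is to produce $\V[\epsilon]$ by a generators-and-relations construction, and to reduce the $\V$-linear statement to the absolute one. First I will construct $\cS[\epsilon]$, the free presentable SM $\infty$-category on a square zero object, as a symmetric monoidal localization of the free presentable SM $\infty$-category on \emph{one} object. Then I will set $\V[\epsilon] := \V\otimes_{\cS}\cS[\epsilon]$, the pushout of the structure maps $\cS\to\V$ and $\cS\to\cS[\epsilon]$ in $\CAlg(\Pr^{\mathrm{L}})$, and finish by invoking Remark~\ref{r:universal preserved by tensor} with $\V_1=\cS$, $\V_2=\V$ and $\W_1=\cS[\epsilon]$: the image of the distinguished object of $\cS[\epsilon]$ then exhibits $\V[\epsilon]$ as the free $\V$-algebra on a square zero object. (Here one uses that $\cS$ is the unit of $\CAlg(\Pr^{\mathrm{L}})$, so that $\Fun^{\otimes}_{\cS}(-,-)$ is just the $\infty$-category of symmetric monoidal left adjoint functors and ``free $\cS$-algebra on a square zero object'' means ``free presentable SM $\infty$-category on a square zero object''.)

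For the absolute case, recall that the free presentable SM $\infty$-category on a single object $x$ exists; a concrete model is $\cS\{x\} := \mathrm{PSh}(\Fin^{\simeq})$ equipped with the Day convolution product for the disjoint-union monoidal structure on the groupoid $\Fin^{\simeq}$ of finite sets and bijections (\cite[Example~2.2.6.17]{Lur14}), with $x$ the image under Yoneda of the one-point set. Since $\Fin^{\simeq}$ is the free symmetric monoidal $\infty$-category on a point, evaluation at $x$ gives an equivalence $\ev_x\colon \Fun^{\otimes}(\cS\{x\},\D)\xrightarrow{\sim}\D$, natural in the presentable SM $\infty$-category $\D$, where I write $\Fun^{\otimes}$ for the $\infty$-category of symmetric monoidal left adjoint functors. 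Next I would form the localization $L\colon \cS\{x\}\to\cS[\epsilon]$ at the strongly saturated class $S$ generated by the morphism $\emptyset\to x^{\otimes 2}$ — more precisely, by the set of its tensor translates $h\otimes(\emptyset\to x^{\otimes 2})$ with $h$ ranging over the representables, so that $S$ is of small generation and, since each functor $-\otimes Z$ preserves colimits, closed under tensoring with arbitrary objects. By \cite[Proposition~4.1.7.4]{Lur14} this localization is compatible with the symmetric monoidal structure; thus $\cS[\epsilon]$ is a presentable SM $\infty$-category and $L$ a symmetric monoidal left adjoint. Setting $\epsilon:=L(x)$ and using that $L$ inverts $\emptyset\to x^{\otimes 2}$ and preserves initial objects and tensor products, we get $\epsilon^{\otimes 2}\simeq L(x^{\otimes 2})\simeq L(\emptyset)\simeq\emptyset$, i.e.\ $\epsilon\in\SqZ(\cS[\epsilon])$.

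It then remains to check the universal property. By \cite[Proposition~4.1.7.4]{Lur14} the functor $L$ exhibits $\cS[\epsilon]$ as a localization of $\cS\{x\}$ inside $\CAlg(\Pr^{\mathrm{L}})$, so restriction along $L$ identifies $\Fun^{\otimes}(\cS[\epsilon],\D)$ with the full subcategory of $\Fun^{\otimes}(\cS\{x\},\D)$ on those functors that invert every morphism of $S$. A symmetric monoidal left adjoint $F$ inverts a strongly saturated, tensor-closed class as soon as it inverts a generating set, and $F$ preserves initial objects and tensor products; hence $F$ inverts $S$ if and only if $F(x)^{\otimes 2}\simeq\emptyset$, that is, $F(x)\in\SqZ(\D)$. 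Transporting this condition along $\ev_x\colon\Fun^{\otimes}(\cS\{x\},\D)\xrightarrow{\sim}\D$, and observing that under this equivalence evaluation at $\epsilon$ corresponds to $F\mapsto F(x)$, we obtain a natural equivalence $\ev_\epsilon\colon\Fun^{\otimes}(\cS[\epsilon],\D)\xrightarrow{\sim}\SqZ(\D)$. Feeding this into the base-change step of the first paragraph completes the proof.

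I do not anticipate a substantive obstacle: the entire argument is the categorification of presenting the dual numbers $R[\epsilon]$ as $R[x]/(x^2)$, with the relation $x\otimes x\simeq\emptyset$ playing the role of $x^{2}=0$. The one place that demands care is the localization step, where one must check simultaneously that the localizing class is of small generation — so that $\cS[\epsilon]$ remains presentable — and closed under tensoring — so that the localization is monoidal — and then match its universal property with $\SqZ(\D)$ exactly. One could alternatively run this localization directly over $\V$, starting from the free $\V$-linear SM $\infty$-category $\V\otimes\cS\{x\}$ on an object and bypassing Remark~\ref{r:universal preserved by tensor}.
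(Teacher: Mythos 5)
Your proposal is correct and follows essentially the same route as the paper: reduce to $\V=\cS$ via Remark \ref{r:universal preserved by tensor}, model the free presentable SM $\infty$-category on one object by symmetric sequences (presheaves on $\Fin^{\mathrm{bij}}$ with Day convolution), and take the symmetric monoidal localization imposing $x^{\otimes 2}\simeq \emptyset$. Your generating class of tensor translates of $\emptyset\to x^{\otimes 2}$ is exactly the paper's set $\{\emptyset\to h_n\}_{n\geq 2}$, and your monoidality/universal-property checks match the paper's pushout-product argument and its appeal to the universal property of symmetric monoidal localizations.
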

\begin{proof}
By Remark \ref{r:universal preserved by tensor}, it suffices to check the case $\V=\cS$. Now consider the $\infty$-category of symmetric sequences $\Fin^{\mathrm{bij}}\lrar \cS$, equipped with the Day convolution product with respect to the disjoint union on $\Fin^{\mathrm{bij}}$. Then the monoidal unit is the corepresentable $h_0$, while evaluation at $h_1$ exhibits the $\infty$-category of symmetric sequences as the free presentable SM $\infty$-category on an object \cite[Proposition 2.2.4.9, Corollary 4.8.1.12]{Lur14}. 

Let us now denote by $\cS[\epsilon]$ the (reflective) localization of symmetric sequences at the set of maps $\emptyset\lrar h_n$ for all $n\geq 2$. Since the pushout-product map of $\emptyset\lrar h_n$ and $\emptyset\lrar h_m$ is equivalent to the map $\emptyset\lrar h_{n+m}$, this exhibits $\cS[\epsilon]$ as a symmetric monoidal localzation of $\Fun(\Fin^{\mathrm{bij}}, \cS)$. By the universal property of symmetric monoidal localizations, the square zero object $h_1\in \cS[\epsilon]$ then realizes $\cS[\epsilon]$ as the free presentable SM $\infty$-category on a square zero object.
\end{proof}
\begin{rem}\label{r:free square zero algebra}
The proof of Lemma \ref{lem:free square zero algebra} shows that $\cS[\epsilon]$ is equivalent to the $\infty$-category of symmetric sequences which are contractible in arities $\geq 2$. This $\infty$-category can be identified with the free presentable $\infty$-category generated by two objects $A$ (arity $0$) and $M$ (arity $1$). Furthermore, its symmetric monoidal structure is determined by $A\otimes A=A, M\otimes A=M$ and $M\otimes M\simeq \emptyset$.
\end{rem}
\begin{cons}
Let $\V$ be a stable presentable SM $\infty$-category and consider the following cofiber sequence of excisive functors
$$\xymatrix{
h_{\ast}\otimes 1_{\V}\ar[r]^-{i\otimes \id} & h_{S^{0}}\otimes 1_{\V}\ar[r] & M_\V
}$$ 
where $i\otimes \id$ is the canonical map of corepresentables induced by $\ast\lrar S^0$. Lemma \ref{lem:pushout product} shows that the pushout-product of $h_{\ast}\otimes 1_\V\lrar h_{S^0}\otimes 1_\V$ with itself in $\Fun(\cS^{\fin}_\ast, \V)$ is a $\T\V$-local equivalence. Consequently, the pushout-product map in the monoidal localization $\T\V$ becomes an equivalence. Since the cofiber of a pushout-product map is the tensor product of the cofibers, it follows that $M_{\V}\otimes M_{\V}\simeq 0$ in $\T\V$.
\end{cons}
\begin{pro}\label{p:squarezero for stable}
Let $\V$ be a stable presentable SM $\infty$-category and consider $\T\V$ as a $\V$-linear SM $\infty$-category via the left adjoint $\V\lrar \T\V$ to the projection. Then the square zero object $M_\V\in \T\V$ exhibits $\T\V$ is the free $\V$-algebra on a square zero object.
\end{pro}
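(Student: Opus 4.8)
The strategy is to compare $\T\V$ with the abstract free $\V$-algebra on a square zero object of Lemma~\ref{lem:free square zero algebra} and to show the comparison functor is an equivalence of underlying $\infty$-categories; crucially, the symmetric monoidal structures will never have to be compared by hand. By Lemma~\ref{lem:free square zero algebra} the free $\V$-algebra on a square zero object $\V[\epsilon]$ exists, and (by the proof of that lemma, via Remark~\ref{r:universal preserved by tensor}) it is the pushout $\V\otimes_\cS\cS[\epsilon]$ in $\CAlg(\Pr^\mathrm{L})$, with square zero object $\epsilon_\V$ the image of $h_1\in\cS[\epsilon]$. Since $\T\V$ is a $\V$-linear SM $\infty$-category and $M_\V$ is a square zero object of $\T\V$ by the Construction above, the universal property of $\V[\epsilon]$ supplies a symmetric monoidal left adjoint under $\V$
$$
\rho\colon \V[\epsilon]\lrar \T\V, \qquad \rho(\epsilon_\V)\simeq M_\V,
$$
and $\ev_{M_\V}=\ev_{\epsilon_\V}\circ\rho^*$. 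Since $\ev_{\epsilon_\V}$ is an equivalence, it therefore suffices to prove that $\rho$ is an equivalence.

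To this end I would identify both source and target, as $\V$-module $\infty$-categories, with the free $\V$-module on two objects $\V\otimes_\cS(\cS\times\cS)\simeq\V\times\V$ (which classifies pairs of objects among $\V$-linear left adjoints). On the source, Remark~\ref{r:free square zero algebra} presents $\cS[\epsilon]$ as the free presentable $\infty$-category on the two objects $A$ (the monoidal unit) and $M$; hence $\V[\epsilon]=\V\otimes_\cS\cS[\epsilon]\simeq \V\otimes_\cS(\cS\times\cS)\simeq \V\times\V$, using that the tensor product in $\Pr^\mathrm{L}$ distributes over finite products. Under this identification the two free generators are the monoidal unit $1_{\V[\epsilon]}=1_\V\otimes A$, namely $(1_\V,0)$, and $\epsilon_\V=1_\V\otimes M$, namely $(0,1_\V)$; here stability of $\V$ is used to send the image of the initial object of $\cS\times\cS$ to the zero object of $\V$. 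On the target, since $\V$ is stable Example~\ref{e:stable} identifies $\T\V$ with $\V\times\V$, which for the left adjoint $\V\to\T\V$ is again the free $\V$-module on two objects, with generators $1_{\T\V}=(1_\V,0)$ and $(0,1_\V)$. Tracing the defining cofiber sequence $h_\ast\otimes 1_\V\to h_{S^0}\otimes 1_\V\to M_\V$ through this last identification --- evaluation at $\ast$ kills the base, and the reduced part that remains is the unit of $\Sp(\V)\simeq\V$ --- shows that $M_\V$ corresponds to the generator $(0,1_\V)$.

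Now $\rho$, being symmetric monoidal and colimit-preserving, is a $\V$-linear colimit-preserving functor; it carries the monoidal unit to the monoidal unit and, by construction, $\epsilon_\V$ to $M_\V$. Under the two identifications of the previous paragraph it is therefore a $\V$-linear colimit-preserving endofunctor of $\V\times\V$ fixing both free generators, so by the universal property of the free $\V$-module on two objects it is equivalent to the identity, hence an equivalence; this proves the proposition. The one step that is not purely formal is the last identification above --- matching the Construction's $M_\V$ with a standard generator through the equivalence of Example~\ref{e:stable} --- and this is where I expect the bookkeeping to require the most care; everything else works precisely because the comparison $\rho$ is built symmetric-monoidally from the universal property, so that only its underlying $\V$-linear functor, pinned down by its values on two generators, needs to be analyzed.
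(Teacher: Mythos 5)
Your argument is correct, but it takes a genuinely different route from the paper's. The paper first reduces to the universal case $\V=\Sp$: it builds $\phi\colon\Sp[\epsilon]\to\T\Sp$ from the universal property and observes that its \emph{right adjoint} is precisely the composite $\T\Sp\to\Fun(\mathrm{Ret},\Sp)\to\Sp\times\Sp$ of Example \ref{e:stable}, hence an equivalence; the general case then follows by tensoring the resulting equivalence $\Sp[\epsilon]\simeq\T\Sp$ with $\V$ over $\Sp$, using Remarks \ref{r:via PrL} and \ref{r:universal preserved by tensor} together with the fact that $\T\Sp\to\T\V$ carries $M_{\Sp}$ to $M_{\V}$. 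You instead stay with a general stable $\V$ and prove that the comparison $\rho\colon\V[\epsilon]\to\T\V$ is an equivalence by exhibiting both sides as free $\V$-modules on two generators and using that a $\V$-linear left adjoint is determined, up to natural equivalence, by its values on those generators. What the paper's reduction buys is that one never needs to identify the $\V$-module structure on $\T\V$: the only computation happens over $\Sp$, where the right adjoint can be read off directly. Conversely, your argument avoids the base-change bookkeeping of Remark \ref{r:via PrL}, at the price of the one claim you assert rather than check: that under the equivalence $\T\V\simeq\V\times\V$ of Example \ref{e:stable}, the $\V$-action coming from the square-zero tensoring of Remark \ref{r:tensoring} is the standard diagonal one, so that $(1_\V,0)$ and $(0,1_\V)$ really are \emph{free} generators of the target. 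This is true and easy for stable $\V$ --- since $C\otimes-$ is exact, $C\otimes_{\lev}E$ is already excisive for excisive $E$, so no excisive approximation is needed and the action commutes with $E\mapsto\big(E(\ast),\fib(E(S^0)\to E(\ast))\big)$ --- but it is the exact analogue of the right-adjoint computation in the paper and should be spelled out alongside your (correct) identification of $M_\V$ with the generator $(0,1_\V)$, which you rightly flag as the step needing care.
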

\begin{proof}
Let us first consider the case where $\V=\Sp$. Let $\Sp[\epsilon]\simeq \Sp\otimes \cS[\epsilon]$ be the free $\Sp$-algebra on a square zero object $M$. By Remark \ref{r:free square zero algebra}, $\Sp[\epsilon]$ can be identified with the free stable presentable $\infty$-category on two objects $A, M$. Furthermore, the monoidal structure on $\Sp[\epsilon]$ has unit $A$ and $M\otimes M\simeq \emptyset$.
By the universal property, we now obtain a (symmetric monoidal) left adjoint functor
$$
\phi\colon \Sp[\epsilon]\simeq \Fun(\{A, M\}, \Sp)\lrar \T\Sp
$$
which is determined on generators by $\phi(h_A)=1_{\T\Sp}=h_\ast\otimes 1_{\Sp}$ and $\phi(h_M)=M_{\Sp}=\cof\big(h_\ast\otimes 1_{\Sp}\lrar h_{S^0}\otimes 1_{\Sp}\big)$. It follows that the right adjoint to $\phi$ is is given by the composite functor $\T\Sp \lrar \Fun(\mathrm{Ret}, \Sp) \lrar \Sp \times \Sp$ appearing in Example \ref{e:stable}, which is an equivalence since $\Sp$ is stable.

A general stable presentable SM $\infty$-category $\V$ comes equipped with a canonical symmetric monoidal functor $f\colon \Sp\lrar \V$. Sine the induced functor $\T\Sp\lrar \T\V$ sends $M_{\Sp}$ to $M_{\V}$, the assertion now follows by tensoring the equivalence $\Sp[\epsilon]\lrar \T\Sp$ over $\Sp$ with $\V$ (see Remark \ref{r:via PrL} and \ref{r:universal preserved by tensor}).
\end{proof}
\begin{rem}\label{r:square zero in additive setting}
Let $\V$ be an additive presentable SM $\infty$-category. By \cite[Theorem 5.1]{GGN15}, its stabilization $\Sp(\V)$ carries an induced symmetric monoidal structure and $\Sigma^\infty\colon \V\lrar \Sp(\V)$ is a symmetric monoidal functor. It then follows from Example \ref{e:stable} that there is a pullback square of presentable SM $\infty$-categories
$$\xymatrix{
\T\V\ar[r]\ar[d] & \T(\Sp(\V))\ar[d]\\
\V\ar[r]^-{\Sigma^{\infty}} & \Sp(V).
}$$
In particular, identifying $\T\V\simeq \V\times \Sp(\V)$, Proposition \ref{p:squarezero for stable} then gives an explicit description of the square zero monoidal structure, given informally by the formula
\begin{equation}\label{eq:square zero tensor product additive case}
(C, E)\otimes_{\T\V} (D, F)\simeq \big(C\otimes_{\V} D, (\Sigma^{\infty}C\otimes_{\Sp(\V)} F)\oplus (E\otimes_{\Sp(\V)}\Sigma^{\infty}D)\big).
\end{equation}
\end{rem}
\begin{rem}\label{r:square zero via day convolution}
Let $\V$ be a presentable SM $\infty$-category. Then the free $\V$-algebra $\V[\epsilon]$ on a square zero object can also be described in terms of a variant of the Day convolution product applicable to \emph{promonoidal} $\infty$-categories, as developed in recent work of Nardin-Shah \cite{NS20}. More precisely, one can check that the 2-coloured operad $\MCom$ for commutative algebras and modules is such a promonoidal ($\infty$-)category. Since the underlying category of $\MCom$ is simply the set $\{A, M\}$, this endows $\Fun(\{A, M\}, \V)$ with a Day convolution product which has the property that 
\begin{align*}
(h_A\otimes C)\otimes (h_A\otimes D)&=h_A\otimes (C\otimes D)\\
(h_A\otimes C)\otimes (h_M\otimes D)&=h_M\otimes (C\otimes D)\\
(h_M\otimes C)\otimes (h_M\otimes D) &=\emptyset.
\end{align*}
In particular, the square zero object $h_M\otimes 1_{\V}$ induces a symmetric monoidal functor from $\V[\epsilon]$ to this Day convolution, which is easily seen to be an equivalence. The universal property of the Day convolution therefore implies that for any $\infty$-operad $\O$, there is a natural equivalence
$$
\Alg_{\O}\big(\V[\epsilon]\big)\simeq \Alg_{\O\times\MCom}(\V).
$$
The $\infty$-operad $\M\O=\O\times \MCom$ is the $\infty$-operad for $\O$-algebras and (operadic) modules over them \cite{part1, hin-rectification}. Combining this with Proposition \ref{p:tangent to algebras} and \ref{p:squarezero for stable}, one finds that $\T\Alg_{\O}(\V)\simeq \Alg_{\M\O}(\V)$ for every stable presentable SM $\infty$-category $\V$ (see also \cite{Sch97, BM, Lur14}).
\end{rem}

\subsection{Square zero extensions}\label{s:square zero}
Let $\V$ be an $\infty$-category with finite limits and $B\in\V$ an object. For every $E\in \T_B\V$, there is a natural map $\Omega^{\infty}(E)\lrar B$, arising from the map of finite pointed spaces $S^0\lrar \ast$. For every map $X\lrar B$, we denote by
$$
H^0_\Q(X; E)=\pi_0\Map_{/B}(X, \Omega^\infty(E))
$$ 
the set of homotopy classes of lifts $\eta\colon X\lrar \Omega^\infty(E)$. Since $\Omega^{\infty}(E)$ is a grouplike $\EE_\infty$-monoid over $B$, this forms an abelian group; its unit is the zero map $0\colon X\lrar B\lrar \Omega^\infty(E)$ induced by the map of finite pointed spaces $\ast\lrar S^0$. More generally, we will refer to the groups $H^n_{\Q}(X; E)=H^0_{\Q}(X; \Sigma^nE)$ as the $n$-th \textbf{Quillen cohomology} groups of $X$ with coefficients in $E$. 
Given a section $\eta\colon X\lrar \Omega^{\infty}(E)$, we will say that the pullback square
$$\xymatrix{
\tilde{X}\ar[r]\ar[d] & B\ar[d]^0\\
X\ar[r]_-\eta & \Omega^\infty(E)
}$$
exhibits $\tilde{X}$ as a \textbf{square zero extension} of $X$ \cite[Definition 7.4.1.6]{Lur14}.
\begin{rem}
The above definition of a square zero extension looks slightly more general than the one appearing in \cite[Definition 7.4.1.6]{Lur14}, where it is assumed that $B=X$. However, note that there is a natural map $p\colon X\lrar B$ (induced by the projection $\Omega^{\infty(E)}\lrar B$); pulling back the parametrized spectrum $E$ along $p$, one can also realize $\tilde{X}$ as the square zero extension of $X$ classified by the canonical map $\eta'\colon X\lrar \Omega^\infty(p^*E)$.
\end{rem}
\begin{example}\label{e:square zero extensions in stable}
Let $\V$ be a stable presentable SM $\infty$-category and $\O$ an $\infty$-operad. For any $A\in \Alg_{\O}(\V)$, $\T_A\Alg_{\O}(\V)$ can be identified with the $\infty$-category of operadic $A$-modules (see \cite[Corollary 1.0.5]{part1} or \cite[Theorem 7.3.4.13]{Lur14}). Alternatively, Remark \ref{r:square zero via day convolution} identifies $\T\Alg_{\O}(\V)\simeq \Alg_{\M\O}(\V)$ with the $\infty$-category of $\O$-algebras and modules over them.

Now, given such an $A$-module $E$, the $\O$-algebra $\Omega^\infty(E)$ can be identified with the split square zero extension $A\oplus E$. For any section $\eta\colon A\lrar A\oplus E$, the resulting pullback $A_\eta\lrar A$ then indeed behaves like a square zero extension: its fiber is given by the desuspension $E[-1]$, together with the trivial $\O$-algebra structure (see e.g.\ \cite[Proposition 7.4.1.14]{Lur14}).
\end{example}

\section{Abstract Postnikov towers}\label{s:system}
The goal of this section is to give an axiomatic description of a decomposition of an object in a nice $\infty$-category, together with the data of `$k$-invariants', analogous the Postnikov tower of a space.

\begin{defn}\label{d:abstract Postnikov tower}
Let $\V$ be an $\infty$-category with finite limits. An \textbf{abstract Postnikov tower} of an object $X$ in $\V$ consists of the following data:
\begin{enumerate}
\item
An infinite tower 
$$\xymatrix@C=15pt{
X \ar[r] & ... \ar[r] & \P_a \ar[r] & ... \ar[r] & \P_1 \\
}$$
of objects $\P_a \in \V$ under \(X\) for $a \geq 1$, exhibiting \(X\) as the limit of \(\{\P_a\}_{a \geq 1}\).
\item
For each $a \geq 2$, an object $\K_a\colon \cS^{\fin}_* \to \V$ in $\T\V$ together with a Cartesian square
$$ \xymatrix{
\P_{a} \ar[r]\ar[d] & \pi(\K_{a}) \ar[d]^0 \\
\P_{a-1} \ar_-{k_{a}}[r] & \Omega^\infty\K_{a} \\
}$$
exhibiting $\P_{a} \lrar \P_{a-1}$ as a square zero extension (Section \ref{s:square zero}). 
\end{enumerate}
Note that the convention to start at $a=1$ is rather arbitrary, and in various cases it can be more natural to start at $a=0$.
\end{defn}
\begin{example}
The motivating example of an abstract Postnikov tower is the usual Postnikov tower of a space $X$. In this case, the $\K_{a}$ are given by the (suspended) parametrized Eilenberg--Maclane spectra $\K_a = \Sigma^{a+1}\rH\pi_{a}(X)$ over $\tau_{\leq 1}X$. We will come back to this in Example/Proposition \ref{e:postnikov for spaces}.
\end{example}
\begin{cons}
The data of an object $X$ equipped with an abstract Postnikov tower can be encoded by a single diagram $\T\colon\E\lrar \V$. To see this, consider for each \(a \geq 2\) the $\infty$-category
\[\ET_a := \cS^{\fin}_\ast \coprod_{\Del^{\{1\}} \times \Del^{\{a, a-1\}}} \Del^1 \times \Del^{\{a,a-1\}},\] 
where \(\Del^{\{1\}} \times \Del^{\{a, a-1\}}\) embeds in \(\cS^{\fin}_\ast\) as the arrow \(\ast \lrar S^0\). Furthermore, define
\[\ET := \Big[\ET_1 \coprod_{\Del^{\{2\}}} \ET_{2} \coprod_{\Del^{\{3\}}} \ET_3 \coprod ...\Big]\coprod_{\ZZ^{\op}_{\geq 1}} (\ZZ^{\op}_{\geq 1})^{\triangleleft}  .\]
We then observe that:
\begin{itemize}[leftmargin=*]
\item the restriction of \(\T\) to \(\cS^{\fin}_\ast \subseteq \ET_a\) corresponds to \(\K_a\).
\item the restriction of \(\T\) to \(\Del^1 \times \Del^{\{a,a-1\}} \subseteq \ET_a\) corresponds to the square
\[ \xymatrix{
\P_{a} \ar[r]\ar[d] & \K_a(\ast)=\pi(\K_a) \ar@<-5ex>[d] \\
\P_{a-1} \ar_-{k_a}[r] & \K_a(S^0)=\Omega^{\infty}(\K_a). \\
}\]
\item the restriction of \(\Phi\) to \((\ZZ_{\geq 1}^{\op})^{\triangleleft} \subseteq \ET\) encodes the tower \(X\rar \dots\rar \P_a\rar P_{a-1}\rar \dots\)
\end{itemize}
Let us define the \textbf{$\infty$-category of objects equipped with abstract Postnikov towers} to be the full subcategory
$$
\Tow(\V)\subseteq \Fun(\ET, \V)
$$
of diagrams \(\T\) for which (a) the restriction to each \(\cS^{\fin}_\ast \subseteq \ET_a\) is excisive, (b) the restriction to each \(\Del^1 \times \Del^{\{a,a-1\}} \subseteq \ET_a\) is Cartesian and (c) the restriction to \((\ZZ^{\op}_{\geq 1})^{\triangleleft}\) is a limit cone.
\end{cons}

\begin{rem}\label{r:tow-pullback}
The conditions determining \(\Tow(\V)\) inside \(\Fun(\ET,\V)\) assert that certain designated squares in \(\ET\) are sent to pullback squares and a certain \((\ZZ^{\op}_{\geq 1})^{\triangleleft}\)-diagram is a limit diagram. In particular, \(\Tow(\V)\subseteq \Fun(\ET, \V)\) is closed under limits.
\end{rem}

Note that evaluating a tower at its cone point \(\infty \in (\ZZ^{\op}_{\geq 1})^{\triangleleft} \subseteq \ET\) determines a limit-preserving functor $\ev_{\infty}\colon \Tow(\V)\lrar \V$.

\begin{defn}\label{d:functorial abstract Postnikov tower}
An \textbf{abstract Postnikov tower} on an $\infty$-category \(\V\) is defined to be a section \(\Phi\colon \V \lrar \Tow(\V)\) of the functor \(\ev_{\infty}: \Tow(\V)\lrar \V\).
\end{defn}
\begin{warning}
Note the distinction between an abstract Postnikov tower for an \emph{object in} an $\infty$-category $\V$ (Definition \ref{d:abstract Postnikov tower}) and an abstract Postnikov tower \emph{on} an $\infty$-category $\V$: the former is a single diagram in $\V$, while the latter is a family of diagrams depending functorially on $X\in \V$. This should not cause any confusion, since it is always clear from the context if we are dealing with a functor on $\V$.
\end{warning}

\begin{define}\label{d:multiplicative-quillen-tower}
Let $\V$ be an SM $\infty$-category and endow $\Fun(\ET, \V)$ with the levelwise tensor product. A \textbf{multiplicative abstract Postnikov tower} on $\V$ is a lax symmetric monoidal section \(\Phi\colon \V \lrar \Fun(\ET, \V)\) of \(\ev_\infty\colon \Fun(\ET, \V) \lrar \V\) with the property that the underlying functor of \(\Phi\) is an abstract Postnikov tower (Definition \ref{d:abstract Postnikov tower}). 
\end{define}
\begin{rem}
In general, the objectwise tensor product on $\Fun(\E, \V)$ does not induce a symmetric monoidal structure on the full sub-$\infty$-category $\Tow(\V)$.
\end{rem}
\begin{rem}
Suppose that $\Phi\colon \V\lrar \Fun(\ET, \V)$ is a multiplicative abstract Postnikov tower. Restricting to the copy of $\cS^{\fin}_\ast\subseteq \ET_m$ in level $m$, one obtains a lax monoidal functor $\K_n(\Phi)\colon \V\lrar \Fun(\cS^{\fin}_\ast, \V)$ taking values in $\T\V\subseteq \Fun(\cS^{\fin}_\ast, \V)$. Since $\T\V$ is a monoidal localization of $\Fun(\cS^{\fin}_\ast, \V)$, each $\K_n(\Phi)$ defines a lax monoidal functor $\V\lrar \T\V$ to the tangent bundle, equipped with the square zero monoidal structure (Definition \ref{d:square zero tensor product}).
\end{rem}
\begin{example}\label{e:multiplicative tower Cartesian case}
Suppose that the monoidal structure on $\V$ is given by the Cartesian product. Then the monoidal structure on $\Fun(\ET, \V)$ is the Cartesian monoidal structure as well. Since any functor between Cartesian monoidal categories has a unique lax symmetric monoidal structure (cf.\ \cite[\S 2.4.1]{Lur14}), it follows that any functorial abstract Postnikov tower has a unique multiplicative structure.
\end{example}
The main point of multiplicative abstract Postnikov towers is that they induce such towers on categories of algebras:
\begin{pro}\label{p:quillen-for-algebras}
Let $\O$ be an $\infty$-operad and let $\V$ be a symmetric monoidal $\infty$-category equipped with a multiplicative abstract Postnikov tower $\Phi\colon \V \lrar \Fun(\ET,\V)$. Then the induced map
\[ \Alg_{\O}(\V) \x{\Phi_\ast}{\lrar} \Alg_{\O}\big(\Fun(\ET, \V)\big) \simeq \Fun\big(\ET, \Alg_{\O}(\V)\big) \]
is also a multiplicative abstract Postnikov tower.
\end{pro}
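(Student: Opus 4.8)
The plan is to check that $\Phi_\ast$ lands in the full subcategory $\Tow(\Alg_\O(\V)) \subseteq \Fun(\ET, \Alg_\O(\V))$, that its composite with $\ev_\infty$ is the identity on $\Alg_\O(\V)$, and that it is lax symmetric monoidal; once these are established, being a multiplicative abstract Postnikov tower is immediate from the definitions. The key structural input is the equivalence $\Alg_\O\big(\Fun(\ET, \V), \otimes_{\lev}\big) \simeq \Fun\big(\ET, \Alg_\O(\V)\big)$, which follows from the commutativity of the Boardman--Vogt tensor product (as in the proof of Proposition \ref{p:tangent to algebras}), together with the fact that the forgetful functor $\Alg_\O(\V) \lrar \V$ detects limits (\cite[Corollary 3.2.2.4]{Lur14}).

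First I would verify the three conditions (a), (b), (c) defining $\Tow$. Each of these asserts that a designated diagram in $\ET$ (a pushout-to-pullback square over $\cS^{\fin}_\ast$, a $\Del^1 \times \Del^{\{a,a-1\}}$-square, or the $(\ZZ^{\op}_{\geq 1})^{\triangleleft}$-limit cone) is sent to a limit diagram. Since the forgetful functor $U\colon \Alg_\O(\V)\lrar \V$ detects limits, a diagram in $\Alg_\O(\Fun(\ET,\V)) \simeq \Fun(\ET, \Alg_\O(\V))$ has any of these limit properties precisely when its image under $U$ does; but that image is exactly $\Phi$ applied to the underlying object $U(A) \in \V$, which satisfies (a), (b), (c) by hypothesis. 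For condition (a) I would additionally use the characterization of excisive functors as local objects (Remark \ref{r:generating local equiv}) or simply that excisiveness is detected by the limit conditions in (a); in any case it descends through $U$. This shows $\Phi_\ast(A) \in \Tow(\Alg_\O(\V))$ for every $\O$-algebra $A$.

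Next, the section condition: $\ev_\infty \circ \Phi_\ast = \id$. This is functorial in an evident way — $\ev_\infty$ on $\Fun(\ET, \Alg_\O(\V))$ corresponds under the equivalence above to $\Alg_\O(\ev_\infty)$ on $\Alg_\O(\Fun(\ET,\V))$, and since $\ev_\infty \circ \Phi = \id_\V$ as a symmetric monoidal (or at least lax monoidal) identity, applying $\Alg_\O(-)$ gives $\ev_\infty \circ \Phi_\ast = \id_{\Alg_\O(\V)}$. Finally, for the lax symmetric monoidal structure on $\Phi_\ast$: the functor $\Alg_\O(-)$ is itself (lax) symmetric monoidal with respect to the pointwise symmetric monoidal structures, so it carries the lax symmetric monoidal functor $\Phi\colon \V \lrar \Fun(\ET, \V)$ to a lax symmetric monoidal functor $\Alg_\O(\Phi)\colon \Alg_\O(\V) \lrar \Alg_\O(\Fun(\ET,\V))$; transporting along the symmetric monoidal equivalence $\Alg_\O(\Fun(\ET,\V)) \simeq \Fun(\ET, \Alg_\O(\V))$ (which is symmetric monoidal because it comes from the commutativity of Boardman--Vogt) yields the required lax symmetric monoidal structure on $\Phi_\ast$, and it refines $\ev_\infty \circ \Phi_\ast = \id$ to an identity of lax symmetric monoidal functors.

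The main obstacle is bookkeeping rather than mathematical depth: one must be careful that the equivalence $\Alg_\O\big(\Fun(\ET,\V)\big)\simeq \Fun\big(\ET, \Alg_\O(\V)\big)$ is genuinely symmetric monoidal (for the levelwise tensor products on both sides) and compatible with the evaluation functors $\ev_\infty$, so that the lax monoidal section on $\V$ transports cleanly. Once the naturality of $\Alg_\O(-)$ as a lax-symmetric-monoidal-functor-valued construction is set up, everything else is a matter of unwinding definitions, using only that $U\colon \Alg_\O(\V)\lrar \V$ detects limits to handle the closure conditions defining $\Tow$.
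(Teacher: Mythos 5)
Your proposal is correct and follows essentially the same route as the paper: transport the lax symmetric monoidal section along the Boardman--Vogt/pointwise equivalence $\Alg_{\O}(\Fun(\ET,\V))\simeq \Fun(\ET,\Alg_{\O}(\V))$ compatibly with $\ev_\infty$, then verify the limit conditions defining $\Tow$ via the limit-detecting forgetful functor. The only cosmetic difference is that for a multi-coloured $\O$ the forgetful functor should land in $\Fun(\O_{\<1\>},\V)$ rather than $\V$ (which is exactly what the cited \cite[Corollary 3.2.2.4]{Lur14} gives, and what the paper uses), after which your pointwise argument goes through unchanged.
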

\begin{proof}
First, note that we can view $\E$ as an $\infty$-operad (with only unary operations), so that $\Fun(\E, \V)\simeq \Alg_{\E}(\V)$. The symmetry of the Boardman-Vogt tensor product of $\infty$-operads \cite[Proposition 2.2.5.13]{Lur14} then induces a commuting diagram of symmetric monoidal $\infty$-categories 
\[\xymatrix{
\Alg_{\O\otimes_{\mathrm{BV}}\E}(V)\simeq\Alg_{\O}\big(\Fun(\ET, \V)\big) \ar[r]^{\simeq}\ar[d]_{\Alg_{\O}(\ev_\infty)} & \Fun\big(\ET, \Alg_{\O}(\V)\big)\simeq \Alg_{\E\otimes_{\mathrm{BV}}\O}(V)\ar[d]^{\ev_{\infty}}\\
\Alg_{\O}(\V)\ar[r]_{=} & \Alg_{\O}(\V)
}\]
in which the horizontal arrows are equivalences. It follows that $\Phi_*=\Alg_\O(\Phi)$ defines a lax symmetric monoidal section of $\ev_\infty$. To see that $\Phi_*$ takes values in the full sub-$\infty$-category $\Tow(\Alg_{\O}(\V))\subseteq \Fun(\ET, \Alg_{\O}(\V))$, consider the commuting diagram
$$\xymatrix{
\Alg_{\O}(\V)\ar[r]^-{\Phi_*}\ar[d] & \Alg_{\O}(\Fun(\ET, \V))\ar[r]^\simeq\ar[d] & \Fun(\ET, \Alg_{\O}(\V))\ar[d]\\
\Fun(\O_{\<1\>}, \V)\ar[r]_-{\Phi_*} & \Fun(\O_{\<1\>}, \Fun(\ET, \V))\ar[r]_\simeq & \Fun(\ET, \Fun(\O_{\<1\>}, \V))
}$$
where $\O_{\<1\>}$ is the underlying $\infty$-category of $\O$ \cite[Remark 2.1.1.25]{Lur14}.
Since the vertical functors preserve limits and detect equivalences, the top horizontal composite defines an abstract Postnikov tower if and only if the bottom horizontal composite does (since an $\ET$-diagram is an abstract Postnikov tower if it sends certain sub-diagrams to limit diagrams). But for the bottom horizontal composite this is clear, since limits are computed pointwise.
\end{proof}

\subsection{Examples}
Together with Proposition \ref{p:quillen-for-algebras}, the main sources of examples of multiplicative abstract Postnikov towers are the following:
\begin{expro}\label{e:postnikov for spaces}
Let $\cS$ be the $\infty$-category of spaces. Then the Postnikov tower $X\rar \dots \rar \tau_{\leq 2}(X) \rar \tau_{\leq 1}(X)$, together with its $k$-invariants, gives rise to a multiplicative abstract Postnikov tower on $(\cS, \times)$.
\end{expro}
\begin{proof}
Since we consider $\cS$ with the Cartesian monoidal structure, Example \ref{e:multiplicative tower Cartesian case} shows that it suffices to construct the Postnikov tower as an abstract Postnikov tower (i.e.\ we can forget about the multiplicative structure). The desired functorial abstract Postnikov tower can be produced at the level of simplicial sets (and is classical, cf.\ \cite{DK88, GJ}). Indeed, for every Kan complex $X$, let us define:
\begin{itemize}[leftmargin=*]
\item $\P_a(X)=\cosk_{a+1}(X)$ its $(a+1)$-coskeleton. In particular, $\P_1(X)$ is the nerve of the fundamental groupoid $\Pi_1(X)$ of $X$.

\item for every $a\geq 2$, there is a functor $\pi_a(X)\colon \Pi_1(X)\lrar \Ab$ sending a vertex of $X$ to the corresponding homotopy group. For every pointed simplicial set $S$, taking the free reduced $\pi_a(X)$-module on its simplices yields a simplicial $\Pi_1(X)$-set $\pi_a(X)\otimes S\colon \Pi_1(X)\lrar \sSet$. 

\item Recall that there is a classifying space functor $(-)_{h\Pi_1(X)}$ from $\Fun(\Pi_1(X), \sSet)$ to simplicial sets, given by the following explicit point-set model for the homotopy colimit: $Y_{h\Pi_1(X)}$ has $n$-simplices given by tuples of $x_0\rar \dots\rar x_n$ in $\Pi_1(X)$ and an $n$-simplex of $Y(x_0)$. In particular, $(\ast)_{h\Pi_1(X)}=\P_1(X)$ is the nerve of the fundamental groupoid.

\item Let us define $\sSet^{\fin}_\ast$ to be the full subcategory of pointed simplicial sets whose image in the $\infty$-category $\cS_\ast$ of pointed spaces is finite. We then define $\K_{X, a}\colon \sSet^{\fin}_\ast\lrar \sSet$ by 
$$
\K_{X,a}(T)= \Big(\pi_a(X)\otimes (T\wedge S^{a+1})\Big)_{h\Pi_1(X)}
$$ 
where $S^{a+1}=\Delta^{a+1}/\mathrm{sk}_{a}\Delta^{a+1}$. 

\item By \cite{DK88}, there is a natural map of simplicial sets  for each $a\geq 2$
$$
k_{a}\colon \P_{a-1}(X)\lrar \K_{X, a}(S^0).
$$
By construction, the map $k_{a}$ is trivial on all $(a+1)$ simplices in $\P_{a-1}(X)$ that arise as the image of an $(a+1)$-simplex in $\P_{a}(X)$, so that there is a commuting square
\begin{equation}\label{d:kinvariant square}\vcenter{\xymatrix{
\P_{a}(X)\ar[d]\ar[r] & \K_{X, a}(\ast)\ar[d]^0\\
\P_{a-1}(X)\ar[r]_-{k_{a}} & \K_{X, a}(S^0).
}}\end{equation}
\end{itemize}
For any Kan complex $X$, the functor $\K_{X, a}$ preserves weak equivalences of simplicial sets and hence determines a functor of $\infty$-categories $\K_{X, a}\colon \cS^{\fin}_{\ast}\lrar \cS$. It is straightforward to verify the conditions of Proposition \ref{p:gamma objects as spectra}, which imply that $\K_{X, a}$ is excisive because $\cS$ admits loopspace machinery (Example \ref{ex:loopspace}). Furthermore, the square \eqref{d:kinvariant square} defines a pullback square in the $\infty$-category $\cS$ by \cite{DK88} and the sequence $X\rar \dots\rar \P_a(X)\rar \P_{a-1}(X)\rar \dots$ is a homotopy limit sequence. It follows that the above construction defines, for every Kan complex $X$, a simplicial model for an abstract Postnikov tower in the $\infty$-category $\cS$.

All of the above data is strictly functorial in maps of Kan complexes and sends weak equivalences to pointwise weak equivalences of simplicial sets. It therefore defines a section
$$\xymatrix@C=3pc{
\cS\ar@{..>}[r] & \Tow(\cS)\ar@/_1pc/[l]_{\ev_\infty}
}$$
on $\infty$-categorical localizations, as desired.
\end{proof}
\begin{example}\label{e:algebraic theories}
The abstract Postnikov tower of Example \ref{e:postnikov for spaces} has an additional feature: the explicit construction given in the proof preserves finite products of simplicial sets, so the resulting functor $\Phi\colon \cS\lrar \Fun(\ET, \cS)$ preserves finite products as well. It follows that for any small $\infty$-category with finite products $\T$, the $\infty$-category $\Fun^{\times}(\T, \cS)$ of product-preserving functors comes with an abstract Postnikov tower
$$\xymatrix{
\Fun^\times(\T, \cS)\ar[r]^-{\Phi_*} & \Fun^\times(\T, \Fun(\ET, \cS))\simeq \Fun(\ET, \Fun^\times(\T, \cS)).
}$$
For every $A\in \Fun^\times(\T, \cS)$, this provides a refinement of the tower $A\rar \dots\rar \tau_{\leq 2}A\rar \tau_{\leq 1}A$ of truncations of $A$. In particular, when $\T$ is an algebraic theory, this shows that $\T$-algebras over in $\cS$ have Postnikov towers equipped with $k$-invariants (cf.\ \cite{GH00} for algebras over simplicial operads).
\end{example}
\begin{example}\label{e:toposes}
Let $\X$ be an $\infty$-topos in which Postnikov towers converge \cite[Definition 5.5.6.23]{Lur09}, i.e.\ $\X\lrar \lim_n \tau_{\leq n}\X$ is the limit of its full subcategories of truncated objects (this implies that $\X$ is hypercomplete). In this case, there exists a reflective localization $L\colon \Fun(\C^{\op}, \cS)\adj \X\colon i$ such that $L$ is left exact and preserves (limits of) Postnikov towers. We then obtain an abstract Postnikov tower on $\X$
$$\xymatrix@C=1.8pc{
\X\ar[r]^-i & \Fun(\C^{\op}, \cS)\ar[r]^-{\Phi_*} & \Fun(\C^{\op}, \Fun(\ET, \cS))\simeq \Fun(\ET, \Fun(\C^{\op}, \cS))\ar[r]^-{L_*} & \Fun(\ET, \X).
}$$
Indeed, this sends every object $X\in \X$ to the abstract Postnikov tower of the presheaf $i(X)$ (applying Example \ref{e:postnikov for spaces} pointwise in $\C$), and then applies $L$ to the resulting diagram of presheaves. Since $L$ is left exact and preserves Postnikov towers, the resulting $\ET$-diagram in $\X$ is indeed an abstract Postnikov tower.
\end{example}
\begin{obs}\label{obs:abelian homotopy groups case}
The proof of Example \ref{e:postnikov for spaces} admits the following modification: let $\cS^{\pi-\ab}\subseteq \cS$ be the full subcategory consisting of those spaces $X$ such that each homotopy group $\pi_1(X, x)$ is abelian and acts trivially on the higher $\pi_n(X, x)$. Then there exists a multiplicative abstract Postnikov tower
$$\xymatrix{
\cS^{\pi-\ab}\ar[r] & \Tow(\cS^{\pi-\ab})\subseteq \Tow(\cS)
}$$
whose value on a space $X$ is the Postnikov tower $X\rar \dots\rar \tau_{\leq 1}X\rar \pi_0(X)$ including the zeroth stage. Furthermore, the $k$-invariants are given by maps 
$$\xymatrix{
k_{a}\colon \tau_{\leq a-1}X\ar[r] & \Omega^\infty(\K_{a}(X))
}$$
where $\K_{a}(X)$ is the parametrized spectrum over $\pi_0(X)$ whose fiber over $x\in\pi_0(X)$ denotes the suspended Eilenberg--Maclane spectrum $H(\pi_{a}(X, x))[a+1]$.
\end{obs}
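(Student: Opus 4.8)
The plan is to obtain this multiplicative abstract Postnikov tower as a modification of the one from Example/Proposition \ref{e:postnikov for spaces}: we keep the tower of Postnikov truncations $\tau_{\leq a}X$, adjoin one extra stage $\P_0(X)=\pi_0(X)$ at the bottom, and arrange all of the parametrized Eilenberg--Maclane spectra to be defined over $\pi_0(X)$ rather than over $\tau_{\leq 1}X$. Since $\cS^{\pi-\ab}$ is closed under finite products and we work with Cartesian monoidal structures throughout, Example \ref{e:multiplicative tower Cartesian case} will supply the multiplicative structure for free, so it suffices to produce a functorial abstract Postnikov tower. The point is that for $X\in\cS^{\pi-\ab}$ each homotopy group functor $\pi_a(X)$, $a\geq 1$, is functorially a local system of abelian groups on $\pi_0(X)$, not merely on $\tau_{\leq 1}X$: for $a\geq 2$ this is exactly the hypothesis that $\pi_1$ acts trivially on $\pi_a$, which says that the local system $\pi_a(X)$ on $\tau_{\leq 1}X$ is the pullback along $f\colon\tau_{\leq 1}X\lrar\pi_0(X)$ of a local system on $\pi_0(X)$; for $a=1$ the fundamental groups are abelian, so change-of-basepoint isomorphisms along paths are path-independent and $\pi_1(X)$ on $\tau_{\leq 1}X$ is again pulled back from $\pi_0(X)$. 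One thus obtains functorial parametrized spectra $\K_a(X)=\Sigma^{a+1}\rH\pi_a(X)$ over $\pi_0(X)$ for every $a\geq 1$, whose excisiveness is checked exactly as in \ref{e:postnikov for spaces} via Proposition \ref{p:gamma objects as spectra}, now fibered over the discrete object $\pi_0(X)$.

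With these in hand, the tower is $X\lrar\dots\lrar\tau_{\leq a}X\lrar\dots\lrar\tau_{\leq 1}X\lrar\pi_0(X)$, whose limit is $X$. For $a\geq 2$, the Cartesian square exhibiting $\tau_{\leq a}X\lrar\tau_{\leq a-1}X$ as a square zero extension over $\pi_0(X)$ is obtained from the square \eqref{d:kinvariant square} of \ref{e:postnikov for spaces} by pushing forward along $f$: since the coefficient local system there is $f^*\pi_a(X)$, the parametrized spectrum $\K_{X,a}$ of \ref{e:postnikov for spaces} is $f^*\K_a(X)$, so $\Omega^\infty$ of it is $\tau_{\leq 1}X\times_{\pi_0(X)}\Omega^\infty\K_a(X)$; composing the old $k$-invariant with the projection to $\Omega^\infty\K_a(X)$ gives a map $k_a\colon\tau_{\leq a-1}X\lrar\Omega^\infty\K_a(X)$ over $\pi_0(X)$, and a routine pasting of pullback squares, using naturality of the zero section, identifies $\tau_{\leq a}X$ with $\tau_{\leq a-1}X\times_{\Omega^\infty\K_a(X)}\pi_0(X)$. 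For $a=1$ the $k$-invariant is the zero section $k_1\colon\pi_0(X)\lrar\Omega^\infty\Sigma^2\rH\pi_1(X)$, and the resulting Cartesian square expresses $\tau_{\leq 1}X$ as the fiberwise classifying space over $\pi_0(X)$ of the local system $\pi_1(X)$. Assembling the tower, the spectra $\K_a(X)$ and the $k$-invariants into a single $\ET$-diagram produces a functor $\Phi\colon\cS^{\pi-\ab}\lrar\Fun(\ET,\cS)$ sectioning $\ev_\infty$; all of its vertices lie in $\cS^{\pi-\ab}$, since the truncations $\tau_{\leq a}X$ do, $\pi_0(X)$ is discrete, and each $\Omega^\infty\Sigma^{a+1}\rH\pi_a(X)$ is a disjoint union of spaces $\mathrm{K}(\pi_a(X,x),a+1)$ with $a+1\geq 2$, hence simple. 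Thus $\Phi$ takes values in $\Tow(\cS^{\pi-\ab})\subseteq\Tow(\cS)$, and by Example \ref{e:multiplicative tower Cartesian case} it is a multiplicative abstract Postnikov tower.

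The main obstacle is the $a=1$ step: one must identify, coherently and functorially in $X\in\cS^{\pi-\ab}$, the map $\tau_{\leq 1}X\lrar\pi_0(X)$ with the square zero extension of $\pi_0(X)$ classified by the zero map into $\Omega^\infty\Sigma^2\rH\pi_1(X)$. Equivalently, one must lift the functor $X\mapsto(\tau_{\leq 1}X\lrar\pi_0(X))$ through $\Omega^\infty$ from relative $1$-types over $\pi_0(X)$ to parametrized connective spectra over $\pi_0(X)$ concentrated in degree one; this is precisely where the hypothesis cutting out $\cS^{\pi-\ab}$ is used, and where one must take care that the delooping machinery of Proposition \ref{p:gamma objects as spectra}, applied fiberwise over the discrete object $\pi_0(X)$, is invoked naturally rather than object by object. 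The remaining verifications---excisiveness of the $\K_a(X)$, functoriality, and the limit and pullback conditions defining $\Tow(\cS^{\pi-\ab})$---are a routine adaptation of the proof of \ref{e:postnikov for spaces}.
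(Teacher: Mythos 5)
Your proposal is correct and is essentially the modification the paper intends: under the defining hypotheses of $\cS^{\pi-\ab}$ the local systems $\pi_a(X)$, $a\geq 1$, descend from $\Pi_1(X)$ to $\pi_0(X)$, so one reruns the construction of Example/Proposition \ref{e:postnikov for spaces} with $(-)_{h\pi_0(X)}$ in place of $(-)_{h\Pi_1(X)}$, appends the stage $\pi_0(X)$, and gets multiplicativity for free from Example \ref{e:multiplicative tower Cartesian case}; your derivation of the $a\geq 2$ squares by base change and pasting from those of \ref{e:postnikov for spaces} is a harmless variant of this. The one genuinely new ingredient, which you correctly isolate, is exhibiting $\tau_{\leq 1}X\lrar \pi_0(X)$ naturally as $\Omega^\infty$ of a parametrized spectrum in degree $1$ (a functorial fiberwise delooping of the abelian local system $\pi_1(X)$, e.g.\ via a natural bar/$\overline{W}$-model carrying the classifying homotopy, not the strictly commuting square of zero maps); you leave this at the level of a flagged construction, which matches the level of detail of the paper's own one-line justification.
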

\begin{example}\label{e:additive case}
Let $\Mon_{\EE_\infty}(\cS^{\pi-\ab})$ be the $\infty$-category of $\EE_\infty$-spaces whose underlying space has trivial actions of $\pi_1$. Proposition \ref{p:quillen-for-algebras} and Observation \ref{obs:abelian homotopy groups case} imply that the Postnikov tower $A\rar \dots \rar \tau_{\leq 1}A\rar \tau_{\leq 0}A$ is part of a multiplicative abstract Postnikov tower $\Phi^{\ab}$ on $\big(\Mon_{\EE_\infty}(\cS^{\pi-\ab}), \times\big)$.

Let $A$ be a grouplike $\EE_\infty$-space. Then $A$ is in particular contained in $\Mon_{\EE_\infty}(\cS^{\pi-\ab})$. The corresponding abstract Postnikov tower $\Phi^{\ab}_A\colon \ET\lrar \Mon_{\EE_\infty}(\cS^{\pi-\ab})$ has the property that $\pi_0(\Phi_A)$ is the constant diagram with value $\pi_0(A)$. In particular, $\Phi^{\ab}_A$ takes values in the full subcategory of grouplike $\EE_\infty$-monoids. It follows that the multiplicative abstract Postnikov tower $\Phi^{\ab}$ restricts to a multiplicative abstract Postnikov tower on grouplike $\EE_\infty$-monoids, which fits into a commuting square
$$\xymatrix{
\Grp_{\EE_\infty}(\cS)\ar[r]^-{\Phi^{\ab}}\ar[d]_-{\mathrm{forget}} & \Tow\big(\Grp_{\EE_\infty}(\cS)\big)\ar[d]^-{\textrm{forget}}\\
\cS^{\pi-\ab}\ar[r]^-{\text{\footnotesize \ref{obs:abelian homotopy groups case}}} & \Tow(\cS).
}$$
\end{example}
\begin{expro}\label{e:postnikov for prestable}
Let $\V$ be a complete Grothendieck prestable $\infty$-category. Then the towers $X\rar \dots \rar \tau_{\leq 1}(X) \rar \tau_{\leq 0}(X)$ are part of an abstract Postnikov tower on $\V$. If $\V$ is furthermore presentable SM, then this is a multiplicative abstract Postnikov tower.
\end{expro}
\begin{rem}
The above conditions are equivalent to $\V=\W^{\geq 0}$ being the connective part of a left complete $t$-structure on a stable presentable $\infty$-category $\W$ with the property that $\tau_{\geq 0}\colon \W\lrar \W^{\geq 0}$ preserves filtered colimits \cite[Proposition C.1.4.1]{Lur16}.
\end{rem}
\begin{proof}
To avoid repetition, let us prove the claim for a symmetric monoidal $\V$; the simpler non-monoidal case is proven in the same way, removing all references to the monoidal structure. To begin, since $\V$ is presentable there exists a small full subcategory $\C^{\op}\subseteq \V$ which generates $\V$ under colimits; without loss of generality, we can assume that $\C^{\op}$ is closed under finite direct sums and finite tensor product. This determines adjunctions
$$\xymatrix@C=2.8pc{
L\colon \Fun(\C, \cS)\ar@<1ex>[r]^{L_1} & \Fun^{\times}(\C, \cS)\ar@<1ex>[l]^-{i_1} \ar@<1ex>[r]^-{L_2} & \V\ar@<1ex>[l]^-{i_2} \colon i
}$$
where the middle $\infty$-category consists of product-preserving functors $\C\lrar \cS$ and the functors $i_1$ and $i_2$ are both fully faithful. The composite adjoint pair $(L, i)$ realizes $\V$ as a symmetric monoidal localization of $\Fun(\C, \cS)$, endowed with the Day convolution product \cite[Proposition 2.2]{NS}. Furthermore, $L_2$ is left exact by the $\infty$-categorical Gabriel-Popescu theorem \cite[Theorem C.2.1.6]{Lur16}. 

Note that $\Fun^{\times}(\C, \cS)$ is an additive $\infty$-category, so that the canonical forgetful functor $\Fun^{\times}\big(\C, \Grp_{\EE_\infty}(\cS)\big)\lrar \Fun^{\times}(\C, \cS)$ is an equivalence. In particular, every $F\in \Fun^{\times}(\C, \cS)$ defines a diagram with values in the subcategory $\cS^{\pi-\ab}$ of Observation \ref{obs:abelian homotopy groups case}. Using this, we can now consider the following commuting diagram:
\begin{equation}\label{d:postnikov for prestable}\begin{split}\scalebox{0.92}{\xymatrix@R=0.5pc@C=0.9pc{
\hspace{20pt}\V\hspace{-2pt}\ar[0, 0]+<16pt, -4pt>;[rd]^-{i_2} \\
& \Fun^{\times}(\C, \cS)\ar[rr]^-{\Phi^{\times}_*}\ar[dd]_-{i_1} & & \Fun^{\times}(\C, \Fun(\ET, \cS))\ar[rr]^{\simeq}\ar[dd] & & \Fun(\ET, \Fun^\times(\C, \cS))\ar[dd]^-{(i_1)_*}\ar@/^1pc/[rddd]^-{(L_2)_*}\\
\\
& \Fun(\C, \cS^{\pi-\ab})\ar[rr]_-{\Phi_*} & & \Fun(\C, \Fun(\ET, \cS))\ar[rr]_-{\simeq} & & \Fun(\ET, \Fun(\C, \cS))\ar[rd]_-{L_*}\\
& & & & & & \hspace{-3pt}\Fun(\ET, \V).
}}\end{split}\raisetag{-50pt}\end{equation}
Here $\Phi_*$ applies the (multiplicative) abstract Postnikov tower $\Phi$ from Observation \ref{obs:abelian homotopy groups case} pointwise, $\Phi^{\times}_*$ is simply the restriction to product-preserving functors (Example \ref{e:algebraic theories}) and the right triangle commutes since $L\circ i_1\simeq L_2\circ L_1\circ i_1\simeq L_2$. Let us denote the total composite functor by $\Psi$.

Let us first prove that $\Psi$ is a lax symmetric monoidal section of $\ev_\infty\colon \Fun(\ET, \V)\lrar \V$. To see this, note that $\Psi$ is the top horizontal composite of
$$\scalebox{0.92}{\xymatrix@C=2pc{
\V\ar[r]^-{i}\ar[d]_= & \Fun(\C, \cS^{\pi-\ab})\ar[d]\ar[r]^-{\Phi_\ast} & \Fun(\C, \Fun(\ET, \cS))\ar[d]^{\ev_\infty} \ar[r]^{\simeq} &\Fun(\ET, \Fun(\C, \cS))\ar[d]^{\ev_\infty}\ar[r]^-{L_*} & \Fun(\E, \V)\ar[d]^{\ev_\infty}\\
\V\ar[r]_-{i} & \Fun(\C, \cS)\ar[r]_-{=} & \Fun(\C, \cS)\ar[r]_-{=} & \Fun(\C, \cS)\ar[r]_-L & \V.
}}$$
All $\infty$-categories carry natural symmetric monoidal structures, given levelwise with respect to $\ET$ and by Day convolution with respect to $\C$. All functors in this diagram are then lax symmetric monoidal. Indeed, for $\Phi_*$ this uses that Day convolution is functorial with respect to lax symmetric monoidal functors in the target (see \cite[\S 2.2.6]{Lur14}), while for $L_*$ it uses that the levelwise tensor product is functorial with respect to (lax) symmetric monoidal functors. Furthermore, the equivalence in the top row is part of a symmetric monoidal equivalence
$$
\Fun\big(\C, \Fun(\ET, \cS)_\text{lev}\big)_\text{Day}\simeq \Fun\big(\ET, \Fun(\C, \cS)_\text{Day}\big)_\text{lev}.
$$
Finally, since $(L, \iota)$ defines a symmetric monoidal localization, the bottom composite is naturally equivalent to the identity as a lax symmetric monoidal functor. It follows that $\Psi$ indeed defines a lax symmetric monoidal section of $\ev_\infty$.

It remains to verify that for every object $X\in \V$, the diagram $\Psi_X\colon \ET\lrar \V$ indeed satisfies the conditions of Definition \ref{d:abstract Postnikov tower}. Realizing $\Psi$ as the top composite in \eqref{d:postnikov for prestable}, we see that $\Psi_X$ is given by
\begin{equation}\label{d:decomposition of Psi}\xymatrix@C=2.8pc{
\Psi_X\colon \ET\ar[r]^-{\Phi^{\times}_{i_2(X)}} & \Fun^{\times}(\C, \cS)\ar[r]^-{L_2}  & \V.
}\end{equation}
Here $\Phi^{\times}$ is the abstract Postnikov tower provided (pointwise in $\C$) by Example \ref{e:algebraic theories}. 

Note that $\Psi_X$ sends all designated squares in $\ET$ to pullback squares in $\V$ (Remark \ref{r:tow-pullback}) since $L_2$ is left exact. The underlying tower of $\Phi_X$ is given by
$$\xymatrix@C=1.3pc{
L_2i_2(X)\ar[r] & \dots \ar[r] & L_2\big(\tau_{\leq a}i_2(X)\big)\ar[r] & L_2\big(\tau_{\leq a-1}i_2(X)\big)\ar[r] & \dots\ar[r] & L_2\big(\tau_{\leq 0}i_2(X)\big).
}$$
Since $L_2$ is a left exact left adjoint, it commutes with the truncation functors \cite[Proposition 5.5.6.28]{Lur09}. It follows that the tower underlying $\Psi_X$ agrees with the Postnikov tower of $X$ in $\V$. Since Postnikov towers in $\V$ converge (since $\V$ was assumed to be complete), this tower is a limit tower. We conclude that $\Psi$ provides a multiplicative functorial abstract Postnikov tower.
\end{proof}
\begin{rem}
In the setting of Example \ref{e:postnikov for prestable}, the proof shows that the $k$-invariants fit into \emph{pushout} squares
$$\xymatrix{
\tau_{\leq a}X\ar[d]\ar[r] & \pi_0(X)\ar[d]\\
\tau_{\leq a-1}X\ar[r]_-{k_{a}} & \Omega^\infty(\K_{a}).
}$$
Note that this implies that $\Omega^{\infty}(\K_{a})\simeq \pi_0(X)\oplus \Sigma^{a+1}\pi_{a}(X)$. 
The main content of Example \ref{e:postnikov for prestable} is that these $k$-invariants are compatible with the tensor product; in particular, Proposition \ref{p:quillen-for-algebras} shows that for an $\O$-algebra $A$ in $\V$, the $k$-invariant defines an $\O$-algebra map into the square zero extension $\pi_0(A)\oplus \Sigma^{a+1}\pi_{a}(A)$ (cf.\ \cite[Corollary 7.4.1.28]{Lur14}).
\end{rem}
\begin{example}\label{e:algebras in prestable}
Let $\V$ be a complete SM Grothendieck prestable $\infty$-category and let $\O$ be an $\infty$-operad. For example, one can take $\V=\Sp^{\geq 0}$ to be the $\infty$-category of connective spectra with the smash product. Combining Proposition \ref{p:quillen-for-algebras} and Example/Proposition \ref{e:postnikov for prestable}, we find that the Postnikov tower $A\lrar \dots \lrar \tau_{\leq 1}A\lrar \tau_{\leq 0} A$ of $\O$-algebras in $\V$ is part of a (functorial, multiplicative) abstract Postnikov tower on $\Alg_{\O}(\V)$. By Example \ref{e:square zero extensions in stable}, this means that each stage of the Postnikov tower fits into a pullback square of $\O$-algebras
$$\xymatrix{
\tau_{\leq a}A\ar[d]\ar[r] & \pi_0(A)\ar[d]^-{(\id, 0)}\\
\tau_{\leq a-1}A\ar[r]_-{k_a} & \pi_0(A)\oplus \Sigma^{a+1}\pi_a(A)
}$$
where $\pi_0(A)\oplus \Sigma^{a+1}\pi_a(A)$ is the square zero extension of $\pi_0(A)$ by the operadic module $\Sigma^{a+1}\pi_a(A)$. By specializing to $\O=\EE_n$, this recovers \cite[Corollary 7.4.1.28]{Lur14}.
\end{example}

\section{Abstract Postnikov towers of enriched categories}\label{s:ncat}
In the previous section we have seen how multiplicative abstract Postnikov towers give rise to multiplicative abstract Postnikov towers on $\infty$-categories of algebras over operads (Proposition \ref{p:quillen-for-algebras}). The purpose of this section is to prove that similarly, a multiplicative abstract Postnikov tower on a symmetric monoidal $\infty$-category $\V$ induces a multiplicative abstract Postnikov tower on the $\infty$-category of $\V$-enriched $\infty$-categories.

\subsection{Recollections on enriched $\infty$-categories}
Let us briefly recall some elements of the theory of enriched $\infty$-categories developed by Gepner--Haugseng \cite{GH15}. Let $\V$ be a symmetric monoidal $\infty$-category. For every space $X$, there exists a (nonsymmetric) $(X\times X)$-coloured $\infty$-operad $\O_X$ \cite[Definition 4.2.4]{GH15} whose algebras can be informally described as follows: an algebra consists of objects $\Map(x, y)\in \V$, depending functorially on $(x, y)\in X\times X$, together with composition operations satisfying obvious associativity conditions. We will refer to such algebras as \textbf{$\V$-enriched categorical algebras with space of objects} $X$ and denote the $\infty$-category of these by $\Alg_{\Cat}^X(\V)$.

The $\infty$-category $\Alg_{\Cat}^X(\V)$ depends (contravariantly) functorially on $X$ and for each $f\colon X\lrar Y$, the restriction functor $f^*\colon \Alg_{\Cat}^Y(\V)\lrar \Alg_{\Cat}^X(\V)$ preserves limits \cite[Corollary 3.2.2.4]{Lur14}. One defines the $\infty$-category of \textbf{categorical algebras}
\begin{equation}\label{e:categorical algebras fibration}\xymatrix{
\Ob\colon \Alg_{\Cat}(\V) = \int_{X\in \cS} \Alg_{\Cat}^X(\V)\ar[r] & \cS
}\end{equation}
to be the domain of the corresponding Cartesian fibration \cite[Definition 4.3.1]{GH15}. If $\V$ is a presentable monoidal $\infty$-category, then $\Alg_{\Cat}(\V)$ is presentable as well. The $\infty$-category $\Alg_{\Cat}(\V)$ is functorial in $\V$ with respect to lax monoidal functors; in fact, the resulting functor from the $\infty$-category of monoidal $\infty$-categories and lax monoidal functors
$$\xymatrix{
\Cat_\infty^{\otimes, \text{lax}}\ar[r] & \Cat_\infty; \hspace{4pt} \V\ar@{|->}[r] & \Alg_{\Cat}(\V)
}$$
is itself a lax monoidal functor, with respect to the Cartesian product of monoidal $\infty$-categories \cite[Proposition 4.3.11]{GH15}. Consequently, when $\V$ is (presentable) symmetric monoidal, there exists a (presentable) symmetric monoidal structure on $\Alg_{\Cat}(\V)$ \cite[Corollary 4.3.12]{GH15}; informally, given two categorical algebras $\CC$, $\DD$ with spaces of objects $X, Y$, their tensor product $\CC\otimes \DD$ has space of objects $X\times Y$ and mapping objects
$$
\Map_{\CC\otimes\DD}\big((x_0, y_0), (x_1, y_1)\big)=\Map_{\CC}(x_0, x_1)\otimes \Map_{\DD}(y_0, y_1).
$$
In particular, the unit is given by the categorical algebra $[0]_{1_\V}$ with a single object $\ast$ and with $1_\V$ as endomorphisms.

The $\infty$-category of \textbf{$\V$-enriched $\infty$-categories} $\Cat(\V)$ is defined to be the full subcategory $\Cat(\V)\subseteq \Alg_{\Cat}(\V)$ of \textbf{complete} categorical algebras. More precisely, there is a functor 
$$
J[-]\colon \Delta\lrar \Alg_{\Cat}(\V)
$$
sending $[n]$ to the categorical algebra with object set $\{0, \dots, n\}$, all mapping objects being \(1_{\V}\) and all compositions being equivalences. We will abbreviate $J=J[1]$. Every categorical algebra $\CC$ then defines a simplicial space
$$\xymatrix{
\Delta^{\op}\ar[r] & \cS; \hspace{4pt} [n]\ar@{|->}[r] & \Map_{\Alg_{\Cat}(\V)}\big(J[n], \CC\big).
}$$
This simplicial space is a Segal groupoid \cite[Corollary 5.2.7]{GH15} and $\CC$ is defined to be complete if this Segal groupoid is essentially constant. Note that the above Segal space only depends on the categorical algebra in $\cS$ obtained by applying the lax monoidal functor $\Map(1_\V, -)\colon \V\lrar \cS$ to all mapping objects. Furthermore, the space $\Map(J[n], \CC)\subseteq \Map([n]_{1_\V}, \CC)$ is a union of path components in the space of $n$-composable sequences of arrows in $\CC$ \cite[Proposition 5.1.17]{GH15}.

The inclusion of $\V$-enriched $\infty$-categories into categorical algebras is part of an adjoint pair
$$\xymatrix@C=3pc{
\Alg_{\Cat}(\V)\ar@<1ex>[r]^-{(-)^\wedge} & \hspace{2pt}\Cat(\V)\ar@{_{(}->}@<1ex>[l]_-\perp
}$$
whose left adjoint is called \textbf{completion} \cite[Theorem 5.6.6]{GH15}. When $\V$ is symmetric monoidal, this is a symmetric monoidal localization \cite[Proposition 5.7.14]{GH15}. 

Finally, let us recall that the completion functor realizes $\Cat(\V)$ as the localization of $\Alg_{\Cat}(\V)$ at the \textbf{Dwyer--Kan equivalences}, i.e.\ the fully faithful and essentially surjective functors in the following sense:
\begin{defn}
We will say that a map of categorical algebras $f\colon \CC\lrar\DD$ is:
\begin{enumerate}[leftmargin=*]
\item \textbf{fully faithful} if for every two objects $x, y\in \Ob(\CC)$, the map
$$
f\colon \Map_{\CC}(x, y)\lrar \Map_{\DD}\big(f(x), f(y)\big)
$$
is an equivalence in $\V$. Equivalently, $f$ is a Cartesian arrow for the Cartesian fibration \eqref{e:categorical algebras fibration}.
\item \textbf{essentially surjective} if the map
$$
\Map(\{0\}, \CC)\times_{\Map(\{0\}, \DD)}\Map(J, \DD)\lrar \Map(\{1\}, \DD)
$$
is surjective on \(\pi_0\). Here the mapping spaces are taken in the \(\infty\)-category \(\Alg_{\Cat}(\V)\).
\item an \textbf{isofibration} if the induced map
\[ \Map(J,\CC) \lrar \Map(J,\DD) \times_{\Map(\{0,1\},\DD)} \Map(\{0,1\},\CC) \]
is surjective on \(\pi_0\).
\end{enumerate}
\end{defn}

\subsection{The cube and tower lemmas}
Throughout, let $\V$ be a monoidal $\infty$-category. The purpose of this section is to record two kinds of (`homotopy') limits of categorical algebras that are preserved by the completion functor $(-)^\wedge\colon \Alg_{\Cat}(\V)\lrar \Cat(\V)$. The results and arguments are very analogous to the usual way of computing homotopy limits of categories in terms of the canonical model structure on categories.
\begin{lem}\label{l:chase}
Consider a commutative square of categorical algebras
\begin{equation}\label{e:square} 
\vcenter{\xymatrix{
\CC' \ar[r]^-{g'}\ar[d]_-{p} & \DD'\ar[d]^-{q} \\
\CC \ar[r]_-{g} & \DD \\
}}
\end{equation}
such that \(g'\) is essentially surjective, \(g\) is fully faithful and \(p\) is an isofibration. Then
\[\scalebox{0.92}{$\Map(\{0\},\CC') \hspace{-2pt}\displaystyle\mathop{\times}_{\Map(\{0\},\DD')} \hspace{-2pt} \Map(J,\DD')  \lrar \Map(\{0\},\CC) \hspace{-2pt}\displaystyle\mathop{\times}_{\Map(\{0\},\DD)} \hspace{-2pt} \Map(J,\DD) \hspace{-2pt} \displaystyle\mathop{\times}_{\Map(\{1\},\DD)}\hspace{-2pt} \Map(\{1\},\DD')$}\]
is surjective on path components. 
\end{lem}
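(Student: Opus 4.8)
The plan is to run a diagram chase, entirely parallel to the classical verification that in the canonical model structure on $\Cat$ the isofibrations are stable under the relevant pullbacks; the three hypotheses on \eqref{e:square} play the roles of ``$g'$ a cofibration and equivalence'', ``$g$ fully faithful'', and ``$p$ a fibration''.

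First I unwind both sides. Since $J$ is the walking equivalence, a point of $\Map(J,\DD)$ is an equivalence $\delta\colon d_0\xrightarrow{\ \sim\ }d_1$ in $\DD$, recorded with its source and target; write $s,t$ for these. Thus a point of the codomain of the map in question is a triple $(c,\delta,d_1')$ with $c\in\CC$ an object, $\delta\colon g(c)\xrightarrow{\ \sim\ }d_1$ an equivalence in $\DD$, and $d_1'\in\DD'$ an object with $q(d_1')=d_1$; a point of the domain is a pair $(c',\delta')$ with $c'\in\CC'$ and $\delta'\colon g'(c')\xrightarrow{\ \sim\ }d_1'$ an equivalence in $\DD'$, and the map carries it to $\bigl(p(c'),\,q(\delta'),\,t(\delta')\bigr)$. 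So, given $(c,\delta,d_1')$, I must produce a point $(c',\delta')$ of the domain together with a path in the codomain from its image to $(c,\delta,d_1')$.

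Here is the chase. \textbf{(1)} Apply the essential surjectivity of $g'$ to the object $d_1'\in\DD'$, producing an object $\bar c\in\CC'$ and an equivalence $\bar\delta\colon g'(\bar c)\xrightarrow{\ \sim\ }d_1'$ in $\DD'$. \textbf{(2)} Push down along $q$, using $q\circ g'=g\circ p$, to get $q(\bar\delta)\colon g(p\bar c)\xrightarrow{\ \sim\ }d_1$; composing with $\delta^{-1}$ gives an equivalence $g(p\bar c)\xrightarrow{\ \sim\ }g(c)$ in $\DD$. \textbf{(3)} As $g$ is fully faithful it induces equivalences on mapping objects, hence reflects equivalences, so this arrow is $g(\nu)$ for an essentially unique equivalence $\nu\colon p\bar c\xrightarrow{\ \sim\ }c$ in $\CC$. \textbf{(4)} As $p$ is an isofibration we may transport $\bar c$ along $\nu$: there is an object $c'\in\CC'$ lying over $c$ and an equivalence $\bar\nu\colon\bar c\xrightarrow{\ \sim\ }c'$ in $\CC'$ lying over $\nu$. \textbf{(5)} Put $\delta':=\bar\delta\circ g'(\bar\nu)^{-1}\colon g'(c')\xrightarrow{\ \sim\ }d_1'$, so that $(c',\delta')$ is a point of the domain. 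Using $q\circ g'(\bar\nu)=g(p\bar\nu)=g(\nu)$ and $g(\nu)\simeq\delta^{-1}\circ q(\bar\delta)$, its image is $\bigl(c,\,q(\bar\delta)\circ g(\nu)^{-1},\,d_1'\bigr)$, which is connected to $(c,\delta,d_1')$ by the evident path of equivalences; since that path is constant on both endpoints it is compatible with the two fibre products, and this gives the required surjectivity on path components.

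The only genuine work is to make this chase homotopy-coherent. The identifications in Steps 2--5 are witnessed by homotopies, not equalities, and Step 4 uses ``isofibration'' in the form of an actual transport of objects and equivalences with prescribed source rather than a bare $\pi_0$-statement; the clean way to handle both is to choose, once and for all, inverses and coherence data for the equivalences produced above and to assemble Steps 1--5 into a single map out of a contractible parameter space, after which the claims that it lands in the domain and carries the stated image become routine bookkeeping. Everything else --- unwinding the fibre products, the fact that fully faithful functors reflect equivalences, and the description of $\Map(J,-)$ as a space of equivalences --- is formal, and the whole argument is patterned on the usual computation of homotopy limits of ordinary categories that this section is emulating.
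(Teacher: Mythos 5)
Your chase is essentially the paper's own proof: after (implicitly) passing to the underlying space-valued categorical algebras via $\Map_{\V}(1_{\V},-)$, it uses essential surjectivity of $g'$ to lift $d_1'$, full faithfulness of $g$ to transfer the resulting comparison equivalence to $\CC$, and the isofibration property of $p$ to move the lift to an object over $c$, then composes and checks the evident homotopy, exactly as in the paper. The only cosmetic difference is the direction of the transport along $p$ (you lift $\nu\colon p\bar c\to c$ with prescribed source, the paper lifts $\eps\colon c\to p(t')$ with prescribed target $t'$), which is immaterial.
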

\begin{proof}
Applying the functor \(\Alg_{\Cat}(\V) \lrar \Alg_{\Cat}(\cS)\) induced by  the lax monoidal functor \(\Map_{\V}(1_{\V},-)\colon \V \lrar \cS\), we may as well assume that \(\V = \cS\). Explicitly, given objects \(c \in \CC\) and \(d'\in \DD'\) together with an equivalence \(\alp\colon g(c) \x{\simeq}{\lrar} d=q(d')\) in \(\DD\), we need to find an object $c'\in \CC'$ lying over $c$ and an equivalence \(\alp'\colon g'(c') \x{\simeq}{\lrar} d'\) in \(\DD'\) lying over \(\alp\).

To begin, since \(g'\) is essentially surjective there exists an object \(t' \in \CC'\) and an equivalence \(\gam'\colon g'(t') \x{\simeq}{\lrar} d'\) in \(\DD'\). %, where \(t'\) is the image of \(w'\). 
One can then complete \(q(\gam')\) and \(\alp\) to a commutative triangle
\begin{equation}\label{e:top-triangle}
\vcenter{\xymatrix{
& q(g'(t')) \ar[dr]^{q(\gam')} & \\
g(c)\ar[ur]^{\del}\ar[rr]^{\alp} && d\\
}}
\end{equation}
in \(\DD\) for some equivalence \(\del\colon g(c) \lrar q(g'(t')) \simeq g(p(t'))\). Since \(g\) is fully-faithful there exists an equivalence \(\eps\colon c\lrar p(t')\) lying over \(\del\), and since \(p\) is an isofibration we may lift \(\eps\) to an equivalence \(\eps'\colon c' \lrar t'\) for some \(c' \in \CC'\) lying over \(c\).  
We may then complete \(g'(\eps')\) and \(\gam'\) to a commutative diagram
\begin{equation}\label{e:bottom-triangle}
\vcenter{\xymatrix{
& g'(t') \ar[dr]^{\gam'} & \\
g'(c')\ar[ur]^{g'(\eps')}\ar[rr]^{\alp'} && d'\\
}}
\end{equation}
for some equivalence \(\alp'\colon g'(c') \lrar d'\) in \(\DD'\). Since the image of triangle~\eqref{e:bottom-triangle} under \(q\colon \DD'\lrar\DD\) agrees with triangle~\eqref{e:top-triangle} on the inner horn it follows that \(q(\alp')\) is homotopic to \(\alp\). This yields the desired data of \(c'\) and \(\alp'\colon g'(c') \lrar d'\) so that the proof is complete.
\end{proof}

\begin{lem}[Cube lemma]\label{l:cube lemma}
Consider a map of Cartesian squares in \(\Alg_{\Cat}(\V)\)
\begin{equation}\label{e:alpha} 
\vcenter{\xymatrix@R=10pt{
\PP \ar[r]\ar[dd]  & \CC'   \ar[dd]^{p} &&& \QQ \ar[r]\ar[dd] & \DD' \ar[dd]^{q} \\
&& \ar@{=>}[r]^{\left(\begin{matrix} f & g' \\ g'' & g \end{matrix}\right)} &&& \\
\CC'' \ar[r]_h & \CC  &&& \DD'' \ar[r]_k & \DD \\
}}
\end{equation}
such that \(p\) is an isofibration. If the components \(g\colon\CC\lrar \DD\), \(g'\colon\CC' \lrar \DD'\) and \(g''\colon\CC'' \lrar \DD''\) are Dwyer--Kan equivalences, then the same holds for \(f\colon\PP \lrar \QQ\).
\end{lem}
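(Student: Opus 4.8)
The statement concerns a map of Cartesian squares of categorical algebras in which the three "target" components are Dwyer--Kan equivalences and one of the structure maps is an isofibration; we must conclude that the induced map on pullbacks is also a Dwyer--Kan equivalence. Since Dwyer--Kan equivalences are exactly the maps inverted by the completion functor $(-)^\wedge$, and since being a Dwyer--Kan equivalence means being fully faithful and essentially surjective, the natural strategy is to verify these two properties of $f\colon \PP\lrar\QQ$ separately, reducing as much as possible to $\V=\cS$ by applying the lax monoidal functor $\Map_\V(1_\V,-)\colon\V\lrar\cS$ (as in the proof of Lemma~\ref{l:chase}).

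\emph{Fully faithfulness.} For objects $x,y\in\Ob(\PP)$, I would compute $\Map_{\PP}(x,y)$ using that \eqref{e:alpha} is Cartesian and that for the Cartesian fibration \eqref{e:categorical algebras fibration} fibre products of categorical algebras are computed "fibrewise" over $\cS$: concretely, $\PP$ has mapping objects given by the pullback $\Map_{\CC''}\times_{\Map_{\CC}}\Map_{\CC'}$ over the appropriate objects, and likewise for $\QQ$ over $\DD$. The map on mapping objects is then the map of pullbacks induced by $g,g',g''$, each of which is an equivalence on mapping objects since $g,g',g''$ are fully faithful; hence $f$ is fully faithful. (This step uses no isofibration hypothesis.)

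\emph{Essential surjectivity.} This is where the isofibration is needed and where the real work lies; it is essentially a reprise of the argument of Lemma~\ref{l:chase}. Reducing to $\V=\cS$, I must show: given an object $d=(d'',d')$ of $\QQ$ — i.e.\ $d''\in\DD''$, $d'\in\DD'$ with matching images in $\DD$ — there is an object $(c'',c')$ of $\PP$ and an equivalence in $\QQ$ from its image under $f$ to $d$, where the space of such lifting data is controlled by the pullback $\Map(\{0\},\PP)\times_{\Map(\{0\},\QQ)}\Map(J,\QQ)$. Since $g''$ is essentially surjective, lift $d''$ to $c''\in\CC''$ with an equivalence $\beta''\colon g''(c'')\lrar d''$; now I need to lift $d'$ compatibly over $\CC$. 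Here I apply Lemma~\ref{l:chase} to the right-hand Cartesian square (or to a square extracted from \eqref{e:alpha}), using that $p\colon\CC'\lrar\CC$ is an isofibration, $g'$ is fully faithful, and $g''$ (hence the relevant map into $\CC$) is essentially surjective: this produces $c'\in\CC'$ lying over the image of $c''$ in $\CC$ together with a compatible equivalence $\beta'\colon g'(c')\lrar d'$. By the fibrewise description of the pullback, $(c'',c')$ assembles to an object of $\PP$ and $(\beta'',\beta')$ to an equivalence $f(c'',c')\lrar d$ in $\QQ$.

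\emph{Main obstacle.} The genuine difficulty is bookkeeping the compatibility of the two lifts over the base $\CC$, i.e.\ checking that the object $c'$ produced by Lemma~\ref{l:chase} really has underlying object in $\CC$ equal to the image of $c''$, so that $(c'',c')$ defines an object of the pullback $\PP=\CC''\times_\CC\CC'$, and similarly that $\beta''$ and $\beta'$ glue to an equivalence in $\QQ$. Phrasing Lemma~\ref{l:chase} for the correct square — with $\CC''\lrar\CC$ playing the role of $g$ after passing through the essential surjectivity of $g''$ — and tracking the evident naturality in \eqref{e:alpha} resolves this, but it is the step that requires care. Everything else (the reduction to $\cS$, the fibrewise pullback formula for mapping objects, and the fact that fully faithful $+$ essentially surjective implies Dwyer--Kan equivalence, hence inverted by completion) is formal.
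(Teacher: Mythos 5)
Your proposal follows the paper's proof essentially verbatim: fully faithfulness is checked by observing that the mapping objects of $\PP$ and $\QQ$ are pullbacks of the mapping objects of the other corners, and essential surjectivity is obtained by reducing to $\V=\cS$ via $\Map_{\V}(1_\V,-)$, lifting $d''$ through $g''$, and then applying Lemma~\ref{l:chase} to the square formed by $p$, $q$, $g'$, $g$ to lift $d'$ compatibly over $c=h(c'')$. The only slip is in your attribution of the hypotheses of Lemma~\ref{l:chase} (it requires $g'$ essentially surjective and $g$ fully faithful, not $g'$ fully faithful), but since $g$, $g'$, $g''$ are all Dwyer--Kan equivalences every needed property is available, so the argument goes through exactly as in the paper.
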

\begin{cor}\label{c:main-point-cube}
The completion functor \((-)^{\wedge}\colon \Alg_{\Cat}(\V) \lrar \Cat(\V)\) sends pullback squares with one leg being an isofibration to pullback squares. 
\end{cor}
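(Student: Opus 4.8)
The plan is to reduce the statement to the Cube Lemma (Lemma~\ref{l:cube lemma}), using that $(-)^{\wedge}$ exhibits $\Cat(\V)$ as the localization of $\Alg_{\Cat}(\V)$ at the Dwyer--Kan equivalences. So consider a pullback square in $\Alg_{\Cat}(\V)$
$$\xymatrix{
\PP\ar[r]\ar[d] & \CC'\ar[d]^-{p}\\
\CC''\ar[r]_-{h} & \CC
}$$
in which one of the two legs is an isofibration. Since pullback squares are symmetric in the two legs, we may assume without loss of generality that this leg is $p\colon\CC'\lrar\CC$.

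First I would form the pullback $\QQ := \CC'^{\wedge}\times_{\CC^{\wedge}}\CC''^{\wedge}$ \emph{inside} $\Alg_{\Cat}(\V)$ of the diagram obtained by applying $(-)^{\wedge}$ to $\CC'\to\CC\leftarrow\CC''$. The completion unit maps $\CC'\lrar\CC'^{\wedge}$, $\CC''\lrar\CC''^{\wedge}$ and $\CC\lrar\CC^{\wedge}$ are natural over $\CC\to\CC^{\wedge}$, so via the universal property of $\QQ$ they assemble into a map of Cartesian squares in $\Alg_{\Cat}(\V)$ whose fourth component is a canonical comparison map $f\colon\PP\lrar\QQ$. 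The three completion units are Dwyer--Kan equivalences and $p$ is an isofibration, so the Cube Lemma applies and shows that $f$ is a Dwyer--Kan equivalence as well.

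Since $(-)^{\wedge}$ inverts Dwyer--Kan equivalences, the induced map $\PP^{\wedge}\lrar\QQ^{\wedge}$ is an equivalence in $\Cat(\V)$. Finally I would identify $\QQ^{\wedge}$ with the pullback of $\CC'^{\wedge}\lrar\CC^{\wedge}\llar\CC''^{\wedge}$ formed inside $\Cat(\V)$: because $\Cat(\V)\subseteq\Alg_{\Cat}(\V)$ is a full reflective subcategory with reflector $(-)^{\wedge}$, the inclusion preserves limits, so the pullback in $\Cat(\V)$ of a diagram of complete categorical algebras is computed by taking the pullback in $\Alg_{\Cat}(\V)$ and then applying completion. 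Consequently the image under $(-)^{\wedge}$ of the original square, which has vertices $\PP^{\wedge},\CC'^{\wedge},\CC''^{\wedge},\CC^{\wedge}$, coincides with this pullback square, and is therefore Cartesian in $\Cat(\V)$.

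The genuine content lies entirely in the Cube Lemma; the remaining steps are formal manipulations of the reflective localization $(-)^{\wedge}$. The only point that deserves a little care is the last identification — that completing a pullback of complete categorical algebras recovers the pullback computed in $\Cat(\V)$ — and this is immediate from the fact that the inclusion of a reflective subcategory, being a right adjoint, preserves all limits.
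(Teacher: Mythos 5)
Your proposal is correct and follows essentially the same route as the paper: the paper's proof is precisely to apply the Cube Lemma with $g, g', g''$ taken to be the completion units, noting that $\QQ = \CC'^{\wedge}\times_{\CC^{\wedge}}\CC''^{\wedge}$ is automatically complete. Your extra remarks about the reflective localization and the identification of $\QQ^{\wedge}$ with the pullback in $\Cat(\V)$ just spell out what the paper leaves implicit.
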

\begin{proof}
Apply Lemma~\ref{l:cube lemma} to the case where the maps \(g,g'\) and \(g''\) exhibit \(\DD,\DD'\) and \(\DD''\) as the completions of \(\CC,\CC'\) and \(\CC''\) respectively (in this case \(\QQ\simeq \DD'\times_{\DD} \DD''\) is automatically complete).
\end{proof}

\begin{proof}[Proof of Lemma~\ref{l:cube lemma}]
To show that \(f\) is fully-faithful, let \(x,y \in \PP\) be two objects, and consider the induced map of squares
\[ \xymatrix@R=10pt@C=1.1pc{
\Map_{\PP}(x,y) \ar[rr]\ar[dd] & & \Map_{\CC'}(x,y)   \ar[dd]^{p_*} &&& \Map_{\QQ}(x,y) \ar[rr]\ar[dd] & &\Map_{\DD'}(x,y) \ar[dd]^{q_*} \\
&&& \ar@{=>}[r] &&&& \\
\Map_{\CC''}(x,y) \ar[rr] & & \Map_{\CC}(x,y)  &&& \Map_{\DD''}(x,y) \ar[rr] & & \Map_{\DD}(x,y). \\
}\]
Both squares are Cartesian in \(\V\) and by assumption the three maps associated to \(g,g'\) and \(g''\) are equivalences, so that the map \(f_*\colon \Map_{\PP}(x, y)\lrar \Map_{\QQ}(x,y)\) is an equivalence as well.

Let us now show that \(f\) is essentially surjective as a map of categorical algebras. Essential surjectivity is detected on the level of the underlying space-valued categorical algebras under the canonical map \(\Alg_{\Cat}(\V) \lrar \Alg_{\Cat}(\cS)\) (induced by the lax monoidal functor \(\Map_{\V}(1_\V,-)\)). We may hence assume that \(\V = \cS\). Let \(y \in \QQ\) be an object and let \(d' \in \DD', d\in \DD, d'' \in \DD''\) be its images.
Since \(g''\colon \CC''\lrar \DD''\) is essentially surjective there exists an object \(c'' \in \CC''\) and an equivalence \(\alp''\colon g''(c'') \x{\simeq}{\lrar} d''\) in \(\DD''\). Let \(c := h(c'') \in \CC\). 
Applying Lemma~\ref{l:chase} to the image 
\[\alp \colon g(c) \simeq k(g''(c'')) \x{\simeq}{\lrar} k(d'') \simeq d\simeq q(d')\] 
of \(\alp''\) in \(\DD\) we deduce the existence of an object \(c' \in \CC'\) lying over \(c\) and an equivalence \(\alp'\colon g'(c') \lrar d'\) in \(\DD'\) lying over \(\alp\).
The compatible triple \((c,c',c'')\) now determines an object \(x\in\PP\) while the compatible triple \((\alp,\alp',\alp'')\) determines an equivalence \(g(x)\x{\simeq}{\lrar} y\) in \(\QQ\).
\end{proof}

\begin{lem}[Tower lemma]\label{l:tower lemma}
Consider a natural transformation between limit cones in \(\Alg_{\Cat}(\V)\)
$$\xymatrix{
\PP\ar[r]\ar[d]_{f} & \dots\ar[r] & \CC_2\ar[r]^{p_2}\ar[d]_{g_2} & \CC_1\ar[d]_{g_1}\ar[r]^{p_1} & \CC_0\ar[d]^{g_0}\\
\QQ\ar[r] & \dots\ar[r] & \DD_2\ar[r]_{q_2} & \DD_1\ar[r]_{q_1} & \DD_0.
}$$
Suppose that all $p_i$ for $i\geq 1$ are isofibrations and all $g_i$ for $i\geq 0$ are Dwyer--Kan equivalences. Then $f$ is a Dwyer--Kan equivalence as well.
\end{lem}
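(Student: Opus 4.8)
The plan is to verify directly that $f\colon\PP\lrar\QQ$ is fully faithful and essentially surjective, handling the two halves separately. For \emph{full faithfulness}, I would first record that mapping objects commute with the limit of the tower: since $\Ob\colon\Alg_{\Cat}(\V)\lrar\cS$ is a Cartesian fibration whose transition functors $f^*$ preserve limits and whose fibres admit limits, it preserves limits, and over a fixed space of objects $X$ the mapping objects are computed by the forgetful functor $\Alg_{\Cat}^X(\V)\lrar\Fun(X\times X,\V)$, which preserves limits by \cite[Corollary 3.2.2.4]{Lur14}. Hence $\Ob(\PP)\simeq\lim_i\Ob(\CC_i)$ and, for $x,y\in\Ob(\PP)$ with images $x_i,y_i\in\Ob(\CC_i)$, one gets $\Map_\PP(x,y)\simeq\lim_i\Map_{\CC_i}(x_i,y_i)$, and similarly for $\QQ$. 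The comparison map $\Map_\PP(x,y)\lrar\Map_\QQ(f(x),f(y))$ is then the limit of the maps $\Map_{\CC_i}(x_i,y_i)\lrar\Map_{\DD_i}(g_i(x_i),g_i(y_i))$, each an equivalence because $g_i$ is fully faithful; hence so is $f$.

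For \emph{essential surjectivity}, the idea is to rephrase the condition as a $\pi_0$-surjectivity statement about a limit of a tower of maps of spaces and then feed in Lemma~\ref{l:chase}. Since $\Map(\{0\},-)$, $\Map(J,-)$ and $\Map(\{1\},-)$ on $\Alg_{\Cat}(\V)$ are corepresentable, hence limit-preserving, and pullbacks commute with limits, the map
\[
\Map(\{0\},\PP)\times_{\Map(\{0\},\QQ)}\Map(J,\QQ)\lrar\Map(\{1\},\QQ)
\]
whose surjectivity on $\pi_0$ expresses essential surjectivity of $f$ is identified with $\lim_i$ of the tower of maps $\Phi_i\colon T_i\lrar\Map(\{1\},\DD_i)$, where $T_i:=\Map(\{0\},\CC_i)\times_{\Map(\{0\},\DD_i)}\Map(J,\DD_i)$. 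Two inputs then drive a limit argument: (i) $\Phi_0$ is surjective on $\pi_0$, which is exactly essential surjectivity of $g_0$; and (ii) for each $i\geq 1$ the natural map $T_i\lrar T_{i-1}\times_{\Map(\{1\},\DD_{i-1})}\Map(\{1\},\DD_i)$ is surjective on $\pi_0$ --- which is precisely the conclusion of Lemma~\ref{l:chase} applied to the naturality square with vertices $\CC_i,\DD_i,\CC_{i-1},\DD_{i-1}$, whose hypotheses are met because $g_i$ is essentially surjective, $g_{i-1}$ is fully faithful and $p_i$ is an isofibration.

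It then remains to prove the elementary statement that a map of towers of spaces $A_\bullet\to B_\bullet$ with $A_0\to B_0$ surjective on $\pi_0$ and each $A_i\to A_{i-1}\times_{B_{i-1}}B_i$ surjective on $\pi_0$ induces a surjection on $\pi_0$ after passing to the limit, and to apply it with $A_i=T_i$, $B_i=\Map(\{1\},\DD_i)$. Here I would use the standard presentation of the homotopy limit of a tower as the homotopy equalizer of two self-maps of $\prod_i A_i$, so that a point of $\lim_i A_i$ amounts to a family $(a_i)$ together with a path in $A_{i-1}$ from the image of $a_i$ to $a_{i-1}$, with no higher coherence required; a lift of a given point $(b_i)\in\lim_i B_i$ is then built by induction, using surjectivity of $A_0\to B_0$ in the base case and lifting the point $(a_{i-1},b_i)\in A_{i-1}\times_{B_{i-1}}B_i$ (with its witnessing path) along $A_i\to A_{i-1}\times_{B_{i-1}}B_i$ at each step. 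The reductions and the identifications of mapping spaces as limits are routine; the step requiring genuine care is bookkeeping the homotopy coherences --- both in asserting that mapping objects and equivalence spaces commute with the limit of the tower, and in assembling the stepwise lifts into a single point of the homotopy limit --- though this is all standard once set up carefully.
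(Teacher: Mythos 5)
Your proof is correct and takes essentially the same approach as the paper: full faithfulness via the fact that the towers of mapping objects are limit towers, and essential surjectivity by an induction whose step is exactly Lemma \ref{l:chase} applied to the naturality squares on $(\CC_i,\DD_i,\CC_{i-1},\DD_{i-1})$. The only difference is presentational: you package the induction as an explicit $\pi_0$-surjectivity statement about limits of towers of spaces built from $\Map(\{0\},-)$, $\Map(J,-)$, $\Map(\{1\},-)$, whereas the paper reduces to $\V=\cS$ and runs the same induction informally at the level of objects and equivalences, leaving the assembly of the compatible choices into a point of the limit implicit.
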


\begin{cor}\label{c:main-point-tower}
The completion functor \((-)^{\wedge}\colon \Alg_{\Cat}(\V) \lrar \Cat(\V)\) sends limits of towers of isofibrations to limits. 
\end{cor}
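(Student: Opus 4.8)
The plan is to deduce this from the Tower lemma (Lemma~\ref{l:tower lemma}), in exactly the way that Corollary~\ref{c:main-point-cube} was deduced from the Cube lemma. Start with a tower of isofibrations
$$
\CC_\bullet\colon\quad \dots \lrar \CC_2 \x{p_2}{\lrar} \CC_1 \x{p_1}{\lrar} \CC_0
$$
in $\Alg_{\Cat}(\V)$ and write $\PP$ for its limit, computed in $\Alg_{\Cat}(\V)$. Applying the completion functor levelwise produces a second tower $\CC_\bullet^{\wedge}$, together with a natural transformation $\CC_\bullet\lrar \CC_\bullet^{\wedge}$ whose components $g_i\colon\CC_i\lrar\CC_i^{\wedge}$ are Dwyer--Kan equivalences by construction. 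Let $\QQ:=\lim_i \CC_i^{\wedge}$ in $\Alg_{\Cat}(\V)$ and let $f\colon\PP\lrar\QQ$ be the induced map on limits.

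The first thing to observe is that $\QQ$ is already complete: since $\Cat(\V)\subseteq\Alg_{\Cat}(\V)$ is reflective it is closed under limits, so the limit of the tower $\CC_\bullet^{\wedge}$ of complete categorical algebras is again complete. Hence $\QQ$ also computes the limit of $\CC_\bullet^{\wedge}$ in $\Cat(\V)$, and $\QQ^{\wedge}\simeq\QQ$. Now apply Lemma~\ref{l:tower lemma}: its hypotheses are exactly that the $p_i$ are isofibrations and the $g_i$ are Dwyer--Kan equivalences, so the conclusion gives that $f\colon\PP\lrar\QQ$ is a Dwyer--Kan equivalence. Since the completion functor inverts Dwyer--Kan equivalences (it realizes $\Cat(\V)$ as the localization of $\Alg_{\Cat}(\V)$ at these maps), the induced map $\PP^{\wedge}\lrar\QQ^{\wedge}\simeq\QQ$ is an equivalence. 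Chasing the universal properties, this equivalence is the canonical comparison map $\PP^{\wedge}\lrar\lim_i\CC_i^{\wedge}$ in $\Cat(\V)$, which is therefore an equivalence; that is, $(-)^{\wedge}$ carries the limit $\PP=\lim_i\CC_i$ to the limit $\lim_i\CC_i^{\wedge}$, as claimed.

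Given Lemma~\ref{l:tower lemma}, there is essentially no obstacle here; the only point needing a moment's thought is the completeness of $\QQ$, which reduces to reflectivity of $\Cat(\V)$ in $\Alg_{\Cat}(\V)$. The genuine work sits in the Tower lemma itself, whose proof combines the inductive lifting argument of Lemma~\ref{l:chase} (used to propagate essential surjectivity down a tower of isofibrations) with the pointwise computation of the mapping objects of a limit, in the same spirit as the proof of the Cube lemma.
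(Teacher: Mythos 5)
Your proof is correct and is essentially the paper's own argument: apply the Tower lemma to the levelwise completion maps $g_i\colon \CC_i\lrar \CC_i^{\wedge}$, noting that $\QQ=\lim_i\CC_i^{\wedge}$ is automatically complete since $\Cat(\V)\subseteq\Alg_{\Cat}(\V)$ is reflective and hence closed under limits. The only difference is that you spell out the identification $\PP^{\wedge}\simeq\QQ$ via the universal property, which the paper leaves implicit.
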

\begin{proof}
Apply Lemma~\ref{l:tower lemma} to the case where the maps \(g_i\) exhibit \(\DD_i\) as the completion of \(\CC_i\) (in which case \(\QQ\) is automatically complete).
\end{proof}
\begin{proof}[Proof of Lemma~\ref{l:tower lemma}]
To show that \(f\) is fully-faithful, let \(x,y \in \PP\) be two objects, and consider the induced map of towers
\[ \xymatrix{
\Map_{\PP}(x,y) \ar[r]\ar[d]_{f_*}  & \dots\ar[r] & \Map_{\CC_1}(x,y)\ar[r]\ar[d]_{(g_1)_*} & \Map_{\CC_0}(x, y)\ar[d]^{(g_0)_*}\\
\Map_{\QQ}(x,y) \ar[r] &\dots\ar[r] & \Map_{\DD_1}(x,y) \ar[r] & \Map_{\DD_0}(x,y) \\
}.\]
Then both towers are limit towers in \(\V\) and by assumption the \((g_i)_*\) are equivalences, so that the map \(f_*\colon \Map_{\PP}(x, y)\lrar \Map_{\QQ}(x,y)\) is an equivalence as well.

Let us now show that \(f\) is essentially surjective as a map of categorical algebras. We may again assume that \(\V = \cS\). Let \(y \in \QQ\) be an object and let \(d_i \in \DD_i\) be its images. 
Since \(g_0\colon \CC_0\lrar \DD_0\) is essentially surjective, there exists an object \(c_0 \in \CC_0\) and an equivalence \(\alp_0\colon g_0(c_0) \x{\simeq}{\lrar} d_0\) in \(\DD_0\). 
Applying Lemma~\ref{l:chase} to $\alp_0$ and $d_1\in\DD_1$, we deduce the existence of an object \(c_1 \in \CC_1\) lying over \(c_0\) and an equivalence \(\alp_1\colon g_1(c_1) \lrar d_1\) in \(\DD_1\) lying over \(\alp_1\). Proceeding inductively, we obtain compatible sequences of objects \(c_i\in\CC_i\) and equivalences \(\alp_i\colon g_i(c_i)\lrar d_i\). These determine an object \(x\in\PP\) and an equivalence \(g(x)\x{\simeq}{\lrar} y\) in \(\QQ\).
\end{proof}

\subsection{Abstract Postnikov towers of enriched $\infty$-categories}
We now turn to our main result, which allows one to produce abstract Postnikov towers of $\V$-enriched $\infty$-categories from (certain) multiplicative abstract Postnikov towers on $\V$.

\begin{define}\label{d:0-connected}
Let $\V$ be a SM $\infty$-category. 
We will say that a map \(f\colon X \lrar Y\) in \(\V\) is \emph{\(0\)-connected} if the induced map \(\Map_{\V}(1_{\V},X) \lrar \Map_{\V}(1_{\V},Y)\) is \(0\)-connected as a map of spaces, that is, its homotopy fibers are all non-empty and connected. 

We will say that an abstract Postnikov tower of an object \(\T\colon \ET \lrar \V\) is \(0\)-connected if it sends every map in \(\ET\) to a \(0\)-connected map. An abstract Postnikov tower \(\Phi\colon \V \lrar \Fun(\ET,\V)\) on $\V$ is \(0\)-connected if it sends each object $X\in\V$ to a \(0\)-connected abstract Postnikov tower of $X$.
\end{define}
\begin{example}
The usual Postnikov tower on spaces (Example \ref{e:postnikov for spaces}) is a $0$-connected abstract Postnikov tower: for every space $X$, the resulting abstract Postnikov tower is constant after applying $\tau_{\leq 1}$. For more general $\infty$-toposes (Example \ref{e:toposes}), the Postnikov tower is typically \emph{not} a $0$-connected abstract Postnikov tower: even though all maps induce isomorphisms on $\pi_0$-sheaves, on global sections they typically do not induce bijections on $\pi_0$.
\end{example}
\begin{example}
Let $\V$ be a stable, presentable SM $\infty$-category with a left complete $t$-structure such that the connective part $\V^{\geq 0}$ is closed under finite tensor products. If the mapping spectrum functor $\Map(1_\V, -)\colon \V\lrar \Sp$ sends $\V^{\geq 0}$ to $\Sp^{\geq 0}$, then the abstract Postnikov tower on $\V^{\geq 0}$ from Example \ref{e:postnikov for prestable} is $0$-connected. This is notably the case when $\V=\Mod_R$ with $R$ a connective ring spectrum (with the usual $t$-structure).
\end{example}
\begin{thm}\label{t:main-theorem}
Let $\V$ be a SM $\infty$-category equipped with a multiplicative abstract Postnikov tower $\Phi\colon \V \lrar \Fun(\ET,\V)$. If \(\Phi\) is \(0\)-connected, then the composite
\[\scalebox{0.92}{
\xymatrix@C=1.5pc{
\Cat(\V)\subseteq \Alg_{\Cat}(\V) \ar[r]^-{\Phi_\ast} & \Alg_{\Cat}(\Fun(\ET,\V)) \ar[r] & \Fun(\ET,\Alg_{\Cat}(\V)) \ar[r]^-{(-)^\wedge} & \Fun(\ET,\Cat(\V))
}}\]
defines a multiplicative abstract Postnikov tower $\Phi_{\Cat}$ on $\Cat(\V)$. Furthermore, this abstract Postnikov tower $\Phi_{\Cat}$ is itself $0$-connected.
\end{thm}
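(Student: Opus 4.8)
The plan is to check the three requirements of Definition~\ref{d:multiplicative-quillen-tower} for the composite $\Phi_{\Cat}\colon\Cat(\V)\lrar\Fun(\ET,\Cat(\V))$, the crucial point being that $0$-connectedness of $\Phi$ is exactly what forces the relevant maps of categorical algebras to be isofibrations, so that the cube and tower lemmas of the previous subsection apply to the pointwise completion. First, the formal properties. Each functor in the composite is lax symmetric monoidal: the inclusion $\Cat(\V)\hookrightarrow\Alg_{\Cat}(\V)$ is right adjoint to the symmetric monoidal localization $(-)^\wedge$ \cite[Proposition 5.7.14]{GH15}; $\Phi_\ast=\Alg_{\Cat}(\Phi)$ inherits a lax monoidal structure from the lax monoidal functoriality of $\V\mapsto\Alg_{\Cat}(\V)$ \cite[Proposition 4.3.11]{GH15}; and the identification of $\Alg_{\Cat}(\Fun(\ET,\V))$ with the full subcategory of $\Fun(\ET,\Alg_{\Cat}(\V))$ on diagrams with constant object space, as well as the levelwise completion $\Fun(\ET,\Alg_{\Cat}(\V))\lrar\Fun(\ET,\Cat(\V))$, are symmetric monoidal for the levelwise tensor products. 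Since $\Phi$ is a section of $\ev_\infty$ and completion is computed pointwise (in particular at the cone point $\infty$), one has $\ev_\infty\circ\Phi_{\Cat}(\CC)\simeq(\ev_\infty\Phi_\ast(\CC))^\wedge\simeq\CC^\wedge\simeq\CC$ for $\CC\in\Cat(\V)$, as $\CC$ is complete; so $\Phi_{\Cat}$ is a lax symmetric monoidal section of $\ev_\infty$.

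Next I would show that, for fixed $\CC\in\Cat(\V)$ with object space $X=\Ob(\CC)$, the diagram $\Phi_\ast(\CC)\colon\ET\lrar\Alg_{\Cat}(\V)$ is an abstract Postnikov tower of $\CC$ \emph{inside} $\Alg_{\Cat}(\V)$. Indeed, $\Phi$ does not change object spaces (being a section of $\ev_\infty$), so $\Phi_\ast(\CC)$ factors through the fibre $\Alg^X_{\Cat}(\V)$, and at a pair $(x,y)$ of objects its mapping object is the abstract Postnikov tower $\Phi(\Map_\CC(x,y))$ of $\Map_\CC(x,y)\in\V$. All diagram shapes appearing in Definition~\ref{d:abstract Postnikov tower} --- the cospans underlying the designated pushout and Cartesian squares, and the tower $\ZZ^{\op}_{\geq 1}$ --- are weakly contractible; since $\Ob$ is a Cartesian fibration, limits of $\O_X$-algebras are computed on underlying objects \cite[Corollary 3.2.2.4]{Lur14}, and the limit over a weakly contractible diagram of the constant object space $X$ is again $X$, a limit diagram of such shape in $\Alg^X_{\Cat}(\V)$ --- computed objectwise on mapping objects --- is again a limit diagram in $\Alg_{\Cat}(\V)$ \cite[\S 4.3.1]{Lur09}. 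Applying this to the abstract Postnikov towers $\Phi(\Map_\CC(x,y))$, we conclude that $\Phi_\ast(\CC)$ sends each designated pushout square to a pullback square, each $\Delta^1\times\Delta^{\{a,a-1\}}$ to a Cartesian square, and $(\ZZ^{\op}_{\geq 1})^{\triangleleft}$ to a limit cone with cone point $\CC$, all in $\Alg_{\Cat}(\V)$.

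Now comes the step where $0$-connectedness is used. By Definition~\ref{d:0-connected}, $\Phi$ carries every morphism of $\ET$ to a $0$-connected map in $\V$; hence $\Phi_\ast(\CC)$ carries every morphism of $\ET$ to a map of categorical algebras that is the identity on object spaces and, by \cite[Proposition 5.1.17]{GH15}, $\pi_0$-surjective on the space of equivalences between any fixed pair of objects --- that is, to an isofibration. In particular all tower maps $\P_a\lrar\P_{a-1}$, the bottom edges $k_a\colon\P_{a-1}\lrar\Omega^\infty\K_a$ of the Cartesian squares, and (for each designated pushout square in $\cS^{\fin}_\ast$) the two maps into its terminal corner, are isofibrations. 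Therefore Corollary~\ref{c:main-point-cube} shows that the levelwise completion $(-)^\wedge$ carries the designated pushout squares and the $\Delta^1\times\Delta^{\{a,a-1\}}$ squares of $\Phi_\ast(\CC)$ to pullback squares in $\Cat(\V)$, and Corollary~\ref{c:main-point-tower} shows it carries the limit tower to a limit tower in $\Cat(\V)$ with cone point $\CC^\wedge\simeq\CC$; these are precisely conditions (a), (b), (c) of Definition~\ref{d:abstract Postnikov tower} for $\Phi_{\Cat}(\CC)$, so $\Phi_{\Cat}$ is a multiplicative abstract Postnikov tower. Finally, to see that $\Phi_{\Cat}$ is $0$-connected, note that $\Map_{\Cat(\V)}(1_{\Cat(\V)},-)=\Map_{\Cat(\V)}([0]_{1_\V},-)$ is the space-of-objects functor $\Ob(-)$, so a map in $\Cat(\V)$ is $0$-connected iff it is $0$-connected on object spaces; for a morphism $e\to e'$ in $\ET$, the map $\Phi_\ast(\CC)(e)\lrar\Phi_\ast(\CC)(e')$ is the identity on objects and $0$-connected on each mapping object, so (using the Segal conditions and that $0$-connected maps are stable under base change) the induced map of Segal groupoids $\Map(J[\bullet],\Phi_\ast(\CC)(e))\lrar\Map(J[\bullet],\Phi_\ast(\CC)(e'))$ is levelwise $0$-connected, whence its geometric realization --- which computes $\Ob$ of the completions --- is $0$-connected as well \cite[\S 6.5.1]{Lur09}.

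The essentially non-formal points --- and the steps I expect to require the most care --- are: (i) the translation in the third paragraph of $0$-connectedness of $\Phi$ into the isofibration hypotheses demanded by Corollaries~\ref{c:main-point-cube} and~\ref{c:main-point-tower}; this is the whole reason the $0$-connectedness assumption is needed, since the intermediate categorical algebras $\P_a(\Phi_\ast\CC)$ are typically \emph{not} complete, so completion genuinely has to be shown to preserve the relevant limits rather than being absorbed into a right adjoint; and (ii) the bookkeeping in the second paragraph showing that limits of categorical algebras with a fixed object space, over weakly contractible index categories, are computed on mapping objects and remain limits in $\Alg_{\Cat}(\V)$.
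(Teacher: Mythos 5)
Your proposal follows essentially the same route as the paper: reduce to an abstract Postnikov tower of categorical algebras with fixed object space (computed on mapping objects), use $0$-connectedness to show every map in the resulting $\ET$-diagram is an isofibration so that the cube and tower lemmas apply to the pointwise completion, and deduce $0$-connectedness of $\Phi_{\Cat}$ from the Segal-groupoid description of the completed object spaces. The only minor divergence is the last step, where the paper shows the diagram of $1$-truncated object spaces is essentially constant while you instead invoke closure of $0$-connected maps under levelwise geometric realization (together with the same implicit point, also used for the isofibration step, that the $\pi_0$-bijections on mapping spaces identify the equivalence components); both arguments are sound.
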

Slightly informally (i.e.\ up to Dwyer--Kan equivalence), the abstract Postnikov tower $\Phi_{\Cat}(\CC)$ of a $\V$-enriched $\infty$-category $\CC$ is obtained by applying $\Phi$ to all mapping objects in $\CC$. We will need $\Phi$ to be $0$-connected in order to make use of the cube and tower lemmas (Lemma \ref{l:cube lemma} and \ref{l:tower lemma}).

Note that Theorem \ref{t:main-theorem} can be applied repeatedly:
\begin{defn}\label{d:enriched n-cat}
The $\infty$-category of \textbf{$\V$-enriched $(\infty, n)$-categories}, resp.\ of $n$-categorical algebras, is defined inductively as
$$
\Cat_{n}(\V) := \Cat(\Cat_{n-1}(\V))\qquad\qquad\qquad \Alg_{\Cat_n}(\V)=\Alg_{\Cat}(\Alg_{\Cat_{n-1}}(\V)).
$$
\end{defn}
The fully faithful inclusion $\Cat_n(\V)\subseteq \Alg_{\Cat_n}(\V)$ admits a left adjoint which realizes $\Cat_n(\V)$ as a localization of the $\infty$-category $\Alg_{\Cat_n}(\V)$ at a certain class of `$n$-categorical Dwyer--Kan equivalences'. One then immediately obtains the following:
\begin{cor}\label{c:enriched n-cats}
Let $\V$ be a SM $\infty$-category equipped with a \(0\)-connected multiplicative abstract Postnikov tower $\Phi\colon \V\lrar \Fun(\ET, \V)$. Then there is an induced multiplicative abstract Postnikov tower
$$\xymatrix{
\Phi_{\Cat_n}\colon \Cat_n(\V)\ar[r] & \Tow\big(\Cat_n(\V)\big)
}$$
on the $\infty$-category of $\V$-enriched $(\infty, n)$-categories. Explicitly, $\Phi_{\Cat_n}$ is given (up to $n$-categorical Dwyer--Kan equivalence) by applying $\Phi$ to objects of $n$-morphisms.
\end{cor}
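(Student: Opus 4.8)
The plan is to induct on $n$, with Theorem~\ref{t:main-theorem} furnishing the inductive step. For $n=0$ one has $\Cat_0(\V)=\V$ and there is nothing to prove: $\Phi_{\Cat_0}:=\Phi$ is the given $0$-connected multiplicative abstract Postnikov tower. For the inductive step I would assume the claim for $n-1$, so that $\Cat_{n-1}(\V)$ carries a $0$-connected multiplicative abstract Postnikov tower $\Phi_{\Cat_{n-1}}$. First I would record that $\Cat_{n-1}(\V)$ is itself a symmetric monoidal $\infty$-category: by \cite[Corollary 4.3.12]{GH15} the $\infty$-category $\Cat(\W)$ of $\W$-enriched $\infty$-categories inherits a symmetric monoidal structure from any SM $\infty$-category $\W$, and Definition~\ref{d:enriched n-cat} presents $\Cat_{n-1}(\V)$ as the $(n-1)$-fold iterate of this construction applied to $\V$.

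Next I would apply Theorem~\ref{t:main-theorem} verbatim, with $\V$ there replaced by the SM $\infty$-category $\Cat_{n-1}(\V)$ equipped with $\Phi_{\Cat_{n-1}}$. This produces a multiplicative abstract Postnikov tower
$$
\Phi_{\Cat_n}:=\big(\Phi_{\Cat_{n-1}}\big)_{\Cat}\colon \Cat_n(\V)=\Cat\big(\Cat_{n-1}(\V)\big)\lrar \Fun\big(\ET,\Cat_n(\V)\big),
$$
taking values in $\Tow\big(\Cat_n(\V)\big)$, and the ``furthermore'' clause of Theorem~\ref{t:main-theorem} shows that $\Phi_{\Cat_n}$ is again $0$-connected --- which is exactly what is needed to continue the induction. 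This establishes the tower on $\Cat_n(\V)$ for every $n$.

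For the explicit description I would unwind the construction in Theorem~\ref{t:main-theorem}: for a $\V$-enriched $(\infty,n)$-category $\CC$ --- a complete categorical algebra in $\Cat_{n-1}(\V)$ --- the tower $\Phi_{\Cat_n}(\CC)$ is obtained, up to $n$-categorical Dwyer--Kan equivalence, by applying $\Phi_{\Cat_{n-1}}$ to the mapping objects of $\CC$, which are $\V$-enriched $(\infty,n-1)$-categories. Feeding in the inductive description of $\Phi_{\Cat_{n-1}}$ and iterating down to $n=0$ then identifies $\Phi_{\Cat_n}(\CC)$ with the result of applying $\Phi$ to the objects of $n$-morphisms of $\CC$. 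The analogous description of $\Cat_n(\V)$ as a localization of $\Alg_{\Cat_n}(\V)$ at the $n$-categorical Dwyer--Kan equivalences follows by the same iteration from the case of $\Cat$.

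I do not expect a serious obstacle: the corollary is a purely formal iteration of Theorem~\ref{t:main-theorem}, which already contains all the analytic content (via the cube and tower lemmas and the $0$-connectedness hypothesis). The only genuine point of care is the bookkeeping of symmetric monoidal structures at each stage of the induction --- one must check that the monoidal structure on $\Cat_{n-1}(\V)$ for which $\Phi_{\Cat_{n-1}}$ is lax monoidal agrees with the Gepner--Haugseng monoidal structure that the next application of Theorem~\ref{t:main-theorem} takes as input. This is immediate from the construction, since the output of Theorem~\ref{t:main-theorem} is by definition a lax symmetric monoidal section of $\ev_\infty$ for the levelwise tensor product on $\Fun(\ET,\Cat(\W))$ induced by the symmetric monoidal structure of \cite[Corollary 4.3.12]{GH15} on $\Cat(\W)$.
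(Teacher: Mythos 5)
Your proposal is correct and follows essentially the same route as the paper, which obtains the corollary by iterating Theorem \ref{t:main-theorem}, using its ``furthermore'' clause (that $\Phi_{\Cat}$ is again $0$-connected) to run the induction and the identification $\Cat_n(\V)=\Cat(\Cat_{n-1}(\V))$ with its inherited symmetric monoidal structure. Your explicit unwinding to ``applying $\Phi$ to objects of $n$-morphisms'' and the remark on matching monoidal structures are exactly the (implicit) content of the paper's one-line justification.
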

We will now turn to the proof of Theorem \ref{t:main-theorem}. Our strategy will be to first prove a version of Theorem \ref{t:main-theorem} for categorical algebras and then descend to enriched $\infty$-categories. To produce an abstract Postnikov tower at the level of categorical algebras, let $\V$ be a SM $\infty$-category and $\I$ an $\infty$-category. Then there is a natural functor
\begin{equation}\label{e:evaluation}
\I\lrar \Fun^{\otimes}\big(\Fun(\I, \V), \V\big)
\end{equation}
sending each object $i\in \I$ to the symmetric monoidal functor $\Fun(\I, \V)\lrar \V$ evaluating at $i$. Since $\Alg_{\Cat}(\V)$ depends on $\V$ in a symmetric monoidal manner \cite[Corollary 5.7.11]{GH15}, Diagram \ref{e:evaluation} induces a functor
\[\xymatrix@C=1.3pc{
\I\ar[r] &  \Fun^{\otimes}\big(\Fun(\I, \V), \V\big)\ar[rr]^-{\Alg_{\Cat}(-)} & & \Fun^{\otimes}\big(\Alg_{\Cat}(\Fun(\I,\V)), \Alg_{\Cat}(\V)\big)
}\]
or equivalently, by adjunction, a symmetric monoidal functor
\begin{equation}\label{d:categorical algebras in diagrams}
\vphi\colon \Alg_{\Cat}(\Fun(\I, \V))\lrar \Fun(\I, \Alg_{\Cat}(\V)).
\end{equation}
\begin{pro}\label{p:tower-algebra}
Let $\V$ be a SM $\infty$-category and consider the natural symmetric monoidal functor 
$$
\vphi\colon \Alg_{\Cat}(\Fun(\ET, \V))\lrar \Fun(\ET, \Alg_{\Cat}(\V)).
$$
If $\CC$ is a categorical algebra over $\Fun(\ET, \V)$ whose mapping objects belong to the full sub-$\infty$-category $\Tow(\V)$, then $\vphi(\CC)$ is  contained in $\Tow(\Alg_{\Cat}(\V))$.
\end{pro}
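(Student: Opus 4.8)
The strategy is to verify directly that $\vphi(\CC)\colon\ET\lrar\Alg_{\Cat}(\V)$ satisfies the three conditions defining $\Tow(\Alg_{\Cat}(\V))$: by Remark~\ref{r:tow-pullback} these amount to $\vphi(\CC)$ sending each designated pushout square inside a copy of $\cS^{\fin}_\ast\subseteq\ET_a$ to a pullback square, each square $\Del^1\times\Del^{\{a,a-1\}}\subseteq\ET_a$ to a Cartesian square, and the cone $(\ZZ^{\op}_{\geq 1})^{\triangleleft}\subseteq\ET$ to a limit cone in $\Alg_{\Cat}(\V)$. The first key point is that $\vphi(\CC)$ has \emph{constant space of objects}: by the construction of $\vphi$ in \eqref{d:categorical algebras in diagrams} one has $\vphi(\CC)(e)\simeq\Alg_{\Cat}(\ev_e)(\CC)$, and since the underlying-space functor $\Ob\colon\Alg_{\Cat}(-)\lrar\cS$ is natural in the monoidal $\infty$-category while $\ev_e$ does not change underlying objects, the composite $\Ob\circ\vphi(\CC)\colon\ET\lrar\cS$ is the constant functor at $X:=\Ob(\CC)$ with all transition maps the identity of $X$. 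Hence $\vphi(\CC)$ lifts to a functor $\ET\lrar\Alg^X_{\Cat}(\V)=\Alg_{\O_X}(\V)$ valued in the fibre of $\Ob$ over $X$, where $\O_X$ is the $(X\times X)$-coloured $\infty$-operad of \cite[Definition 4.2.4]{GH15}.

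The second point is to reduce each of the three limit statements to the level of mapping objects. Every designated diagram shape appearing in $\ET$ — a cospan in the first two cases and $\ZZ^{\op}_{\geq 1}$ in the third — is weakly contractible, so the constant diagram on $X$ over the corresponding cone shape is a limit cone in $\cS$. As $\Ob$ is a Cartesian fibration, it follows (cf.\ \cite[\S 4.3.1]{Lur09}) that $\vphi(\CC)$ sends such a diagram to a limit cone in $\Alg_{\Cat}(\V)$ precisely when the corresponding cone, viewed in the fibre $\Alg_{\O_X}(\V)$, is a limit cone; indeed, since the base cone is constant, being an $\Ob$-relative limit cone is equivalent to being a limit cone in the fibre. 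Now the forgetful functor $\Alg_{\O_X}(\V)\lrar\Fun(X\times X,\V)$ creates limits \cite[Corollary 3.2.2.4]{Lur14}, and limits in $\Fun(X\times X,\V)$ are computed pointwise in $\V$. Hence it is enough to show that, for every pair of objects $x,y$, the functor $\ET\lrar\V,\ e\longmapsto\Map_{\vphi(\CC)(e)}(x,y)$ sends the designated diagrams to limit diagrams in $\V$.

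This follows at once from the hypothesis. Since $\vphi$ is built from the functors $\ev_e$, there is a natural identification $\Map_{\vphi(\CC)(e)}(x,y)\simeq\ev_e\big(\Map_{\CC}(x,y)\big)=\Map_{\CC}(x,y)(e)$, so that $e\longmapsto\Map_{\vphi(\CC)(e)}(x,y)$ is canonically equivalent, as a functor $\ET\lrar\V$, to the mapping object $\Map_{\CC}(x,y)\in\Fun(\ET,\V)$ itself. By assumption $\Map_{\CC}(x,y)$ lies in $\Tow(\V)$, which by Remark~\ref{r:tow-pullback} is exactly the statement that it sends each designated pushout square in a copy of $\cS^{\fin}_\ast$ to a pullback square, each $\Del^1\times\Del^{\{a,a-1\}}$ to a Cartesian square, and $(\ZZ^{\op}_{\geq 1})^{\triangleleft}$ to a limit cone in $\V$. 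Combining this with the reduction of the previous paragraph gives $\vphi(\CC)\in\Tow(\Alg_{\Cat}(\V))$.

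The step requiring the most care is the middle one: one must make sure that $\Ob\circ\vphi(\CC)$ is genuinely the constant diagram (rather than merely pointwise equivalent to a constant one), so that $\vphi(\CC)$ really lifts to the fibre $\Alg_{\O_X}(\V)$ and the relative-limit computation over a constant base applies, and one uses that the diagram shapes occurring in $\ET$ are weakly contractible so that the limit of the underlying spaces of objects is again $X$. Apart from this bookkeeping around the Cartesian fibration $\Ob$, the proof is a direct transport of the defining property of $\Tow(\V)$ to $\Tow(\Alg_{\Cat}(\V))$ through mapping objects.
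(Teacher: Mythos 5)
Your proposal is correct and takes essentially the same route as the paper's proof: note that $\vphi(\CC)$ has constant space of objects $X$ and hence factors through the fibre $\Alg_{\Cat}^X(\V)\simeq\Alg_{\O_X}(\V)$, use that the fibre inclusion preserves the weakly contractible limits in question because $\Ob$ is a Cartesian fibration, and then transfer the defining conditions of $\Tow$ to mapping objects via the limit-preserving, equivalence-detecting forgetful functor to $\Fun(X\times X,\V)$, where they become exactly the hypothesis $\Map_{\CC}(x,y)\in\Tow(\V)$. The only point to tighten is your assertion that, over a constant base cone, being an $\Ob$-relative limit cone is equivalent to being a limit cone in the fibre: this is not a formal consequence of $\Ob$ being a Cartesian fibration and the base being constant, but also uses that the reindexing functors $f^*\colon\Alg_{\Cat}^Y(\V)\lrar\Alg_{\Cat}^X(\V)$ preserve limits (\cite[Corollary 3.2.2.4]{Lur14}), which is exactly the extra input the paper invokes alongside \cite[Proposition 4.3.1.10]{Lur09}.
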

\begin{proof}
Let $X$ denote the space of objects of $\CC$, so that for every \(x,y \in X\) we have a mapping object \(\Map_{\CC}(x,y)\in \Tow(\V)\). By construction, the image \(\vphi(\CC)\) sends \(i \in \ET\) to the categorical algebra with object space \(X\) and mapping objects \(\Map_{\CC(i)}(x,y)\). In particular, the composed functor 
\[\xymatrix{
\ET \ar[r]^-{\phi(\CC)} & \Alg_{\Cat}(\V) \ar[r]^-{\Ob} & \cS
}\]
is constant on \(X\), so that we can consider \(\vphi(\CC)\) as a functor from \(\ET\) to the fiber \(\Alg_{\Cat}^X(\V)\). 

We claim that it suffices to show that \(\vphi(\CC)\) is an abstract Postnikov tower when considered as a diagram in \(\Alg_{\Cat}^X(\V)\). To see this, recall that \(\vphi(\CC)\) is an abstract Postnikov tower in $\Alg_{\Cat}(\V)$ if it sends certain designated squares in \(\ET\) to pullback squares in \(\Alg_{\Cat}(\V)\) and a certain \(\ZZ_{\geq 1}^{\triangleleft}\)-diagram to a limit diagram (Remark \ref{r:tow-pullback}). But the fiber inclusion \(\Cat_\V^X \subseteq \Alg_{\Cat}(\V)\) preserves limits indexed by weakly contractible \(\infty\)-categories (such as squares and towers): indeed, this follows from the fact that \(\Ob\colon \Alg_{\Cat}(\V) \lrar \cS\) is a Cartesian fibration such that for every map of spaces $X\lrar Y$, the induced functor $f^*\colon \Alg_{\Cat}^Y(\V)\lrar \Alg_{\Cat}^X(\V)$ between fibers preserves limits  (see, e.g, \cite[Proposition 4.3.1.10]{Lur09}).  

Let us now prove that \(\vphi(\CC)\) is an abstract Postnikov tower in \(\Alg_{\Cat}^X(\V) \simeq \Alg_{\O_{X}}(\V)\), where \(\O_{X}\) is the \(\infty\)-operad which controls the theory of categorical algebras with object space \(X\). 
To this end, note that the symmetry of the Boardman-Vogt tensor product induces a commuting square
$$\xymatrix{
\Fun\big(\ET, \Alg_{\O_{X}}(\V)\big)\ar[r]^\simeq\ar[d] & \Alg_{\O_{X}}(\Fun(\ET, V))\ar[d]\\
\Fun\big(\ET, \Fun(X\times X, \V)\big)\ar[r]_\simeq & \Fun\big(X\times X, \Fun(\ET, \V)\big). 
}$$
The top horizontal equivalence identifies $\vphi(\CC)$ with $\CC$. Since the vertical functors preserve limits and detect equivalences \cite[Corollary 3.2.2.4]{Lur14}, it suffices to show that the image of $\vphi$ under the left vertical functor is an abstract Postnikov tower. But this follows from our assumption that the image of $\CC$ in $\Fun\big(X\times X, \Fun(\ET, \V)\big)$ is contained in $\Fun(X\times X, \Tow(\V))$. 
\end{proof}
\begin{rem}\label{r:towers of cat alg}
The functor of Proposition~\ref{p:tower-algebra} fits into a Cartesian square of symmetric monoidal \(\infty\)-categories
\[\xymatrix{
\Alg_{\Cat}(\Tow(\V)) \hspace{2pt}\ar[d]_-{\Ob}\ar@{^{(}->}[r] & \Tow(\Alg_{\Cat}(\V)) \ar@<-1pt>[d]^{\Tow(\Ob)} \\
\cS \hspace{2pt}\ar@{^{(}->}[r] & \Tow(\cS) \\
}\]
in which the horizontal arrows are full inclusions and the maps induced on vertical fibers are the equivalences of Proposition~\ref{p:quillen-for-algebras} with respect to the various \(\infty\)-operads \(\{\O_X\}_{X \in \cS}\) such that \(\Alg_{\O_X}(\V) \simeq \Alg_{\Cat}^X(\V)\).
\end{rem}
\begin{proof}[Proof of Theorem \ref{t:main-theorem}]
We have to verify that $\Phi_{\Cat}$ is a lax symmetric monoidal section of the (lax) symmetric monoidal functor $\ev_\infty\colon \Fun(\ET, \Cat(\V))\lrar \V$, and that it takes values in the full subcategory $\Tow(\Cat(\V))\subseteq \Fun(\ET, \Cat(\V))$ of abstract Postnikov towers.

For the first assertion, consider the commuting diagram
$$\scalebox{0.92}{\xymatrix@C=1.6pc{
\Cat(\V)\ar@{^{(}->}[r]\ar[d]_= & \Alg_{\Cat}(\V)\ar[d]_{=}\ar[r]^-{\Phi_*} & \Alg_{\Cat}(\Fun(\ET, V))\ar[r]^-{\vphi}\ar[d]_{\ev_\infty} & \Fun(\ET, \Alg_{\Cat}(\V))\ar[d]^{\ev_\infty}\ar[r]^-{(-)^\wedge} & \Fun(\ET, \Cat(\V))\ar[d]^{\ev_\infty}\\
\Cat(\V)\ar@{^{(}->}[r] & \Alg_{\Cat}(\V)\ar[r]_= & \Alg_{\Cat}(\V)\ar[r]_= & \Alg_{\Cat}(\V)\ar[r]_{(-)^\wedge} & \Cat(\V).
}}$$
All arrows are lax symmetric monoidal functors: for the first and last horizontal arrows (in both rows), this follows from \cite[Proposition 5.7.14]{GH15}. The functor $\vphi$ was the symmetric monoidal functor \eqref{d:categorical algebras in diagrams}. The second square commutes since $\Phi_*$ is a multiplicative abstract Postnikov tower and all other squares commute by naturality in the $\infty$-category $\ET$. It will therefore suffice to show that the composed lax symmetric monoidal functor 
\[
\Cat(\V) \subseteq \Alg_{\Cat}(\V) \x{(-)^{\wedge}}{\lrar} \Cat(\V)
\] 
is naturally equivalent to the identity. Indeed, this natural equivalence is given by the counit of the lax symmetric monoidal adjunction between the inclusion \(\Cat(\V) \subseteq \Alg_{\Cat}(\V)\) and the completion functor \cite[Proposition 5.7.14]{GH15}.

For the second assertion, it suffices to verify that for a $\V$-enriched category $\CC$, its image 
\[
\TT^\wedge:= \Phi_{\Cat}(\CC)\colon \ET\lrar \Cat(\V)
\]
defines an abstract Postnikov tower in $\Cat(\V)$. By construction, $\TT^\wedge$ is the objectwise completion of the diagram which applies the multiplicative abstract Postnikov tower $\Phi$ to all mapping objects
\[\TT:=\Phi_\ast(\CC)\colon \ET\lrar \Alg_{\Cat}(\V).
\] 
By Proposition \ref{p:tower-algebra}, $\TT$ is an abstract Postnikov tower in $\Alg_{\Cat}(\V)$. In other words, $\TT$ sends certain designated squares in $\ET$ to pullback squares of categorical algebras and a certain \(\ZZ_{\geq 1}^{\triangleleft}\)-diagram to a limit diagram  (see Remark \ref{r:tow-pullback}). We have to show that these squares remain pullback squares and this limit diagram remains a limit diagram after applying the completion functor objectwise. This will follow from Corollary \ref{c:main-point-cube} and Corollary~\ref{c:main-point-tower} once we verify that $\TT$ sends every arrow in $\ET$ to an isofibration.
  
To this end, consider a map $f\colon \TT(i)\lrar \TT(j)$ of categorical algebras for $i,j\in\E$. By construction, $f$ induces the identity on spaces of objects and for every two objects $x, y\in \TT(i)$, the map
$$
\Map\big(1_{\V}, \Map_{\TT(i)}(x,y)\big)\lrar \Map\big(1_{\V}, \Map_{\TT(j)}(x, y)\big)
$$
induces a bijection on path components since the abstract Postnikov tower $\Phi_*$ is $0$-connected (Definition \ref{d:0-connected}). In particular, it also induces a bijection between the path components consisting of equivalences. Consequently, for every object $x$ in $\TT(i)$ and every equivalence $\alp\colon f(x)\x{\simeq}{\lrar} y$ in $\TT(j)$, there is a (unique up to homotopy) lift of $\alp$ to an equivalence in $\TT(i)$, so that $f$ is an isofibration.

Finally, we have to verify that $\TT^\wedge$ is a $0$-connected abstract Postnikov tower. Note that the monoidal unit of $\Cat(\V)$ is the completion $[0]_{1_\V}^\wedge$ of the one object enriched $\infty$-category with endomorphisms $1_\V$. In particular, the functor 
$$
\Map_{\Cat(\V)}(1_{\Cat(\V)}, -)\colon \Cat(\V)\lrar \cS
$$
is equivalent to the functor sending a $\V$-enriched $\infty$-category to its space of objects. To see that $\TT^\wedge$ is $0$-connected, it therefore suffices to verify that the diagram of $1$-truncated object spaces
$$\xymatrix{
\tau_{\leq 1}\Ob(\TT^\wedge)\colon \ET\ar[r]^-{\TT^{\wedge}} & \Cat(\V)\ar[r]^-{\Ob}& \cS\ar[r]^-{\tau_{\leq 1}} & \cS_{\leq 1}
}$$
is essentially constant. By \cite[Theorem 5.6.2]{GH15} and the fact that truncation preserves colimits, this diagram of truncated spaces of objects arises as the geometric realization of the $\ET$-diagram of Segal groupoid objects
$$\xymatrix{
\tau_{\leq 1}\Ob(\TT^\wedge) & \ar[l] \tau_{\leq 1}\Ob(\TT) & \ar@<0.5ex>[l]\ar@<-0.5ex>[l]\tau_{\leq 1}\Map\big(J, \TT\big) & \dots\ar@<1ex>[l]\ar@<-1ex>[l]\ar[l]
}$$
where the mapping space is taken in the $\infty$-category of categorical algebras. Note that the tower of categorical algebras $\TT$ is essentially constant at the level of objects. Furthermore, the fibers of the map $(d_0, d_1)\colon \tau_{\leq 1}\Map_{\Alg_{\Cat}(\V)}\big(J, \TT\big)\lrar \tau_{\leq 1}\Ob(\TT)\times \tau_{\leq 1}\Ob(\TT)$ are exactly the sets of homotopy classes of equivalences in $\TT$; we have already seen above that these are essentially constant in $\ET$ as well. It therefore follows that the entire $\ET$-diagram of Segal groupoid objects is essentially constant, hence its colimit is essentially constant as well.
\end{proof}

\section{Local systems on $(\infty, n)$-categories}\label{s:local systems}
In this section, we spell out the contents of Theorem \ref{t:main-theorem} and Corollary \ref{c:enriched n-cats} in the setting of $(\infty, n)$-categories. There are many equivalent models for the $\infty$-category of $(\infty, n)$-categories, one of which is the $\infty$-category $\Cat_n(\cS)$ of $\cS$-enriched $(\infty, n)$-categories of Definition \ref{d:enriched n-cat} \cite[Corollary 7.21]{Hau15}. This model is particularly well-adapted to definitions that proceed by induction on mapping objects, such as the following:
\begin{defn}[{\cite[Definition 6.1.1]{GH15}}]
An $(m, 0)$-category is defined to be an $m$-truncated space. For any $0\leq n\leq m$, an $(\infty, n)$-category $\C$ is called an \textbf{$(m, n)$-category} if each mapping $(\infty, n-1)$-category is an $(m-1, n-1)$-category.
\end{defn}
The inclusion $\Cat_{(m, n)}\subseteq \Cat_{(\infty, n)}$ of the full subcategory of $(m, n)$-categories admits a left adjoint  \cite[Proposition 6.1.7]{GH15}, sending an $(\infty, n)$-category $\C$ to its \textbf{homotopy $(m, n)$-category} $\Ho_{(m, n)}\C$. The universal properties of the homotopy $(m, n)$-categories imply that they fit into a converging tower
\begin{equation}\label{d:homotopy categories}\vcenter{\xymatrix{
\C\ar[r] & \dots\ar[r] & \Ho_{(m, n)}(\C)\ar[r] & \Ho_{(m-1, n)}(\C)\ar[r] & \dots\ar[r] & \Ho_{(n+1, n)}(\C).
}}\end{equation}
Theorem \ref{t:main-theorem} and Corollary \ref{c:enriched n-cats} then yield the following more precise statement of Theorem \ref{t:main-theorem-intro}:
\begin{thm}\label{t:main-theorem-oo-n-cats}
For any $(\infty, n)$-category $\C$, the tower of homotopy $(n, m)$-categories \eqref{d:homotopy categories} is part of a (functorial, multiplicative) abstract Postnikov tower: for each $a\geq 2$, there exists a parametrized spectrum object
$$
\rH\pi_a(\C)\in \T_{\Ho_{(n+1, n)}(\C)}\big(\Cat_{(\infty, n)}\big)
$$
and a pullback square of $(\infty, n)$-categories
$$\xymatrix{
\Ho_{(n+a, n)}\C\ar[d]\ar[r] & \Ho_{(n+1, n)}\C\ar[d]^0\\
\Ho_{(n+a-1, n)}\C\ar[r]_-{k_a} & \Omega^{\infty}\big(\Sigma^{a+1}\rH\pi_a(\C)\big).
}$$
\end{thm}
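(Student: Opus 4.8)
The plan is to apply Corollary \ref{c:enriched n-cats} with $\V = \cS$ equipped with its Cartesian symmetric monoidal structure, and then to identify the resulting tower with the tower of homotopy $(m,n)$-categories. The hypothesis of Corollary \ref{c:enriched n-cats} holds: the Postnikov tower of spaces is a multiplicative abstract Postnikov tower on $(\cS, \times)$ by Example/Proposition \ref{e:postnikov for spaces}, and it is $0$-connected because it becomes constant after applying $\tau_{\leq 1}$ (the first example following Definition \ref{d:0-connected}). Hence for every $n$ one obtains a functorial multiplicative abstract Postnikov tower $\Phi_{\Cat_n}$ on $\Cat_{(\infty,n)}\simeq\Cat_n(\cS)$, given (up to $n$-categorical Dwyer--Kan equivalence) by applying $\P_a = \tau_{\leq a}$ to the objects of $n$-morphisms.

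It then remains to match the pieces of $\Phi_{\Cat_n}(\C)$ with \eqref{d:homotopy categories}. Since an $(\infty,n)$-category is an $(m,n)$-category precisely when its space of $n$-morphisms is $(m-n)$-truncated, and completion (a localization at Dwyer--Kan equivalences) preserves mapping objects up to equivalence, the $a$-th stage $\P_a\big(\Phi_{\Cat_n}(\C)\big)$ is an $(n+a,n)$-category, and the map from $\C$ to it is initial among maps to $(n+a,n)$-categories because on $n$-morphism spaces it is the truncation $\tau_{\leq a}$; formally, one verifies $\Ho_{(n+a,n)}\simeq\Cat\big(\Ho_{(n+a-1,n-1)}\big)$ by induction on $n$, in parallel with the construction of $\Phi_{\Cat_n}$ (base case $\Ho_{(a,0)} = \tau_{\leq a}$), using the functoriality of enrichment and its compatibility with completion \cite{GH15}. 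Thus $\P_a\big(\Phi_{\Cat_n}(\C)\big)\simeq\Ho_{(n+a,n)}\C$ with the tower maps the canonical ones; in particular $\P_1\big(\Phi_{\Cat_n}(\C)\big)\simeq\Ho_{(n+1,n)}\C$, so the tower runs down exactly to $\Ho_{(n+1,n)}\C$. The same induction, using that $\pi = \ev_\ast$ commutes with restriction, with the functor $\vphi$ of \eqref{d:categorical algebras in diagrams}, and with objectwise completion, shows that the base $\pi\big(\K_a(\Phi_{\Cat_n}(\C))\big)$ is again $\P_1\big(\Phi_{\Cat_n}(\C)\big)\simeq\Ho_{(n+1,n)}\C$; the base case here is the computation $\pi(\K_{X,a}) = \K_{X,a}(\ast) = (\ast)_{h\Pi_1(X)} = \tau_{\leq 1}X$ from the proof of Example/Proposition \ref{e:postnikov for spaces}.

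Finally, the fiber $\T_{\Ho_{(n+1,n)}\C}\big(\Cat_{(\infty,n)}\big)\simeq\Sp\big((\Cat_{(\infty,n)})_{/\Ho_{(n+1,n)}\C}\big)$ is stable, so I would set $\rH\pi_a(\C) := \Sigma^{-(a+1)}\K_a\big(\Phi_{\Cat_n}(\C)\big)$ there. The Cartesian square attached to the abstract Postnikov tower $\Phi_{\Cat_n}(\C)$ by Definition \ref{d:abstract Postnikov tower} is then precisely the asserted pullback square: its corners are $\Ho_{(n+a,n)}\C$, $\Ho_{(n+a-1,n)}\C$, $\pi(\K_a) = \Ho_{(n+1,n)}\C$ and $\Omega^{\infty}(\K_a) = \Omega^{\infty}\big(\Sigma^{a+1}\rH\pi_a(\C)\big)$, its bottom map is $k_a$, and its right map is the zero section. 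The main obstacle is the bookkeeping in the second paragraph --- propagating the truncation adjunctions through the iterated enrichment and completion so that the stages and $k$-invariant bases of the abstractly produced tower are matched with the homotopy $(m,n)$-categories; everything else is a direct assembly of results already established.
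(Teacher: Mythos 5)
Your proposal follows essentially the same route as the paper: apply Corollary \ref{c:enriched n-cats} to the $0$-connected multiplicative Postnikov tower on $(\cS,\times)$ from Example/Proposition \ref{e:postnikov for spaces}, identify the underlying tower of $\Phi_{\Cat_n}$ with the tower of homotopy $(m,n)$-categories via the fact that $\Ho_{(m,n)}$ is induced by the monoidal localization $\tau_{\leq m-n}\colon\cS\to\cS_{\leq m-n}$ through the functoriality of enrichment (the paper phrases your inductive identification $\Ho_{(n+a,n)}\simeq\Cat(\Ho_{(n+a-1,n-1)})$ as $\Cat_{(m,n)}\simeq\Cat_n(\cS_{\leq m-n})$ with $\Ho_{(m,n)}=\Cat_n(\tau_{\leq m-n})$, citing \cite{GH15}), and define $\rH\pi_a(\C)$ as the $(a+1)$-fold desuspension of the abstractly produced $\K_a$ over $\Ho_{(n+1,n)}\C$. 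The bookkeeping you flag is exactly what the paper's proof carries out, so the argument is correct and matches the paper's.
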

\begin{proof}
Observe that (by induction) the full subcategory $\Cat_{(m, n)}\subseteq \Cat_{(\infty, n)}$ can be identified with the $\infty$-category $\Cat_n(\cS_{\leq m-n})$ of $(\infty, n)$-categories enriched in $(m-n)$-truncated spaces. Consequently, the localization
$$\xymatrix@C=5pc{
\Ho_{(m, n)}\colon \Cat_{(\infty, n)} = \Cat_n(\cS)\ar@<1ex>[r]^-{\Cat_n(\tau_{\leq m-n})} & \hspace{2pt} \Cat_n(\cS_{\leq m-n})=\Cat_{(m, n)}\ar@<1ex>@{_{(}->}[l]^{\Cat_n(\iota)}\colon \iota
}$$
is induced by the monoidal localization of spaces $\tau_{\leq m-n}\colon \cS\adj \cS_{\leq m-n}\colon \iota$, using the functoriality of $\V$-enriched $(\infty, n)$-categories in lax monoidal functors (cf.\ \cite[Corollary 5.7.6]{GH15}). 

Now let $\Phi\colon \cS\lrar \Fun(\ET, \cS)$ be the multiplicative, $0$-connected abstract Postnikov tower on spaces refining the classical Postnikov tower (Example \ref{e:postnikov for spaces}). By Theorem \ref{t:main-theorem} and Corollary \ref{c:enriched n-cats}, this induces a multiplicative abstract Postnikov tower $\Phi_{\Cat_n}$ on $\Cat_{(\infty, n)}$. Note that we can view $\Phi_{\Cat_n}$ as a diagram $\ET\lrar \Fun^{\otimes, \mathrm{lax}}(\Cat_{(\infty, n)}, \Cat_{(\infty, n)})$ of (lax symmetric monoidal) endofunctors of $\Cat_{(\infty, n)}$. Forgetting about the $k$-invariants, the underlying tower of $\Phi_{\Cat_n}$ is given by the tower of functors
$$\xymatrix{
\id\ar[r] & \dots \ar[r] & \Cat_n(\tau_{\leq a})\ar[r] & \Cat_n(\tau_{\leq a-1})\ar[r] & \dots\ar[r] & \Cat_{n}(\tau_{\leq 1}). 
}$$
When evaluated on $\C$, the above argument identifies this with the tower of homotopy $(m, n)$-categories \eqref{d:homotopy categories}. Furthermore, by construction $\Phi_{\Cat_n}$ provides $k$-invariants with coefficients in certain parametrized spectra over $\Ho_{(n+1, n)}(\C)$. We simply \emph{define} the spectra $\rH\pi_a(\C)$ to be the $(a+1)$-fold desuspensions of these spectra.
\end{proof}
The proof of Theorem \ref{t:main-theorem-oo-n-cats} is not completely satisfying because the parametrized spectra $\rH\pi_a(\C)$ are defined somewhat implicitly. In the remainder of this section, we will explain how (as the notation suggests) the parametrized spectra $\rH\pi_a(\C)$ can be considered as the Eilenberg--Maclane spectra associated to \textbf{local systems of abelian groups} on $\Ho_{(n+1, n)}(\C)$, as considered in \cite{Lur09b}.

\subsection{$t$-orientations on tangent categories}
\begin{defn}\label{d:t-structure}
Let $\V$ be an $\infty$-category with finite limits and let $\pi\colon \T\V\lrar \V$ be its tangent bundle. A \textbf{$t$-orientation} on $\T\V$ is a tuple of full subcategories $\big(\T^{\geq 0}\V, \T^{\leq 0}\V\big)$ such that:
\begin{enumerate}
\item For each $\pi$-Cartesian arrow $E\lrar F$ in $\T\V$ with $F\in \T^{\leq 0}\V$, we have that $E\in \T^{\leq 0}\V$.
\item For every $X\in \V$, the tuple
$$
\Big(\T^{\geq 0}\V\cap \T_{X}\V, \T^{\leq 0}\V\cap \T_{X}\V\Big)
$$
defines a $t$-structure on the stable $\infty$-category $\T_X\V$.
\end{enumerate}
In this case, we will refer to $\T^{\heartsuit}\V=\T^{\geq 0}\V\cap \T^{\leq 0}\V$ as the \emph{heart} of the $t$-orientation. 
\end{defn}
Note that Condition (1) of Definition \ref{d:t-structure} is equivalent to $\T^{\leq 0}\V\lrar \V$ being a Cartesian fibration and the inclusion $\T^{\leq 0}\V\lrar \T\V$ preserving Cartesian edges. 
\begin{lem}\label{l:heart}
Let $\V$ be an $\infty$-category with finite limits and let $(\T^{\geq 0}\V, \T^{\leq 0}\V)$ be a $t$-orientation on its tangent bundle. Then:
\begin{enumerate}
\item the restriction of the projection $\pi\colon \T\V\lrar \V$ to each of the three subcategories $\T^{\geq 0}, \T^{\leq 0}\V$ and $\T^{\heartsuit}\V$ is a Cartesian fibration.
\item there exists a commuting square of adjunctions over $\V$, i.e.\ in $\Cat_{\infty}/\V$, of the form
$$\xymatrix@C=3.8pc@R=3pc{
\parbox[c][1em][t]{2em}{$\T^{\heartsuit}\V$}\ar@<1ex>@{^{(}->}[r]\ar@<1ex>@{^{(}->}[d] & \parbox[c][1em][c]{2.2em}{$\T^{\leq 0}\V$}\ar@<1ex>[l]^-{\tau_{\geq 0}}_-{\perp}\ar@<1ex>@{^{(}->}[d]\\
\parbox[c][1em][c]{2.2em}{$\T^{\geq 0}\V$}\ar@<1ex>@{^{(}->}[r]\ar@<1ex>[u]_-{\dashv}^-{\tau_{\leq 0}} & \parbox[c][1em][c]{2em}{$\T\V.$}\ar@<1ex>[l]^-{\tau_{\geq 0}}_-{\perp}\ar@<1ex>[u]_-{\dashv}^-{\tau_{\leq 0}}
}$$
Furthermore, all right adjoint functors preserve Cartesian edges.
\end{enumerate}
\end{lem}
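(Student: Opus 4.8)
The plan is to reduce everything to the two defining conditions of a $t$-orientation together with standard facts about $t$-structures on stable $\infty$-categories. For part (1), recall that $\pi\colon \T\V\lrar \V$ is a Cartesian fibration and that Condition (1) of Definition \ref{d:t-structure} says exactly that $\T^{\leq 0}\V\hookrightarrow \T\V$ is closed under $\pi$-Cartesian edges; hence the restriction $\T^{\leq 0}\V\lrar \V$ is again a Cartesian fibration with the same Cartesian edges. For $\T^{\geq 0}\V$ I would argue dually: in each fiber $\T_X\V$ the connective part is a reflective (in fact coreflective) subcategory with coreflector $\tau_{\geq 0}$, and the key point is that the truncation functors are compatible with the transition functors of the Cartesian fibration. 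Concretely, for a $\pi$-Cartesian arrow $E\lrar F$, the pullback functor $f^*\colon \T_Y\V\lrar \T_X\V$ is exact (it is a right adjoint between stable $\infty$-categories that also preserves the relevant colimits), and Condition (1) forces $f^*$ to send $\T^{\leq 0}_Y\V$ into $\T^{\leq 0}_X\V$; by adjunction $f^*$ then also sends $\T^{\geq 0}_Y\V$ into $\T^{\geq 0}_X\V$. With this compatibility in hand, a $\pi$-Cartesian arrow in $\T\V$ between objects of $\T^{\geq 0}\V$ remains Cartesian in $\T^{\geq 0}\V$, so $\T^{\geq 0}\V\lrar \V$ is a Cartesian fibration; intersecting the two gives the same for $\T^{\heartsuit}\V$.

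For part (2), the square of inclusions of fiberwise-defined subcategories is tautological; the content is producing the adjoints and checking they assemble into functors over $\V$. Fiberwise, the adjunctions $\tau_{\geq 0}\dashv (\text{incl})$ and $(\text{incl})\dashv \tau_{\leq 0}$ are precisely the data of the $t$-structure on each $\T_X\V$. To globalize, I would invoke the fiberwise criterion for adjunctions between Cartesian fibrations (\cite[Proposition 7.3.2.6]{Lur14} or \cite[Corollary 7.3.2.7]{Lur14}): given a functor $G\colon \D\lrar \D'$ over $\V$ between Cartesian fibrations which preserves Cartesian edges, $G$ admits a left adjoint over $\V$ as soon as each fiber functor $G_X$ admits a left adjoint, \emph{and} the left adjoints are automatically compatible with the Cartesian structure, i.e.\ the total left adjoint exists and is a morphism in $\Cat_\infty/\V$ (though it need not preserve Cartesian edges). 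Applying this to the inclusions $\T^{\heartsuit}\V\hookrightarrow \T^{\leq 0}\V$, $\T^{\heartsuit}\V\hookrightarrow \T^{\geq 0}\V$, $\T^{\geq 0}\V\hookrightarrow \T\V$ and $\T^{\leq 0}\V\hookrightarrow \T\V$ — each of which preserves Cartesian edges by part (1) — yields all four adjunctions in the square over $\V$. The commutativity of the square of adjoints then follows from the fiberwise commutativity (the four truncation functors on a single stable $\infty$-category with a $t$-structure visibly commute as in the classical case) together with essential uniqueness of adjoints.

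The remaining clause — that all right adjoints preserve Cartesian edges — is exactly what part (1) established together with the observation that the transition functors $f^*$ commute with $\tau_{\leq 0}$ and $\tau_{\geq 0}$: the inclusions preserve Cartesian edges by part (1), and $\tau_{\leq 0}\colon \T\V\lrar \T^{\leq 0}\V$ (viewed as a right adjoint to the inclusion $\T^{\leq 0}\V\hookrightarrow \T\V$) preserves Cartesian edges because, for a $\pi$-Cartesian arrow $E\lrar f^*E$ over $f\colon X\lrar Y$, we have $\tau_{\leq 0}(f^*E)\simeq f^*(\tau_{\leq 0}E)$ and the induced map $\tau_{\leq 0}E\lrar f^*(\tau_{\leq 0}E)$ is again $\pi$-Cartesian. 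I expect the main obstacle to be precisely this verification that truncation commutes with the Cartesian transition functors $f^*$: it hinges on $f^*$ being $t$-exact (not merely left or right $t$-exact), which in turn is forced by Condition (1) applied in both directions via adjunction, so the argument is not hard but does require spelling out this exactness carefully rather than taking it for granted.
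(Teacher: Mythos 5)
Your reduction hinges on the claim that the transition functors $f^*\colon \T_Y\V\lrar \T_X\V$ are $t$-exact, which you justify by saying that Condition (1) ``applied in both directions via adjunction'' forces right $t$-exactness as well. This is where the argument breaks: Condition (1) of Definition \ref{d:t-structure} makes $f^*$ \emph{left} $t$-exact, and by the standard adjunction yoga this is equivalent to right $t$-exactness of a \emph{left adjoint of} $f^*$ (if one exists), not of $f^*$ itself. Nothing in the definition of a $t$-orientation forces $f^*$ to preserve connective objects, and the statement of the lemma is calibrated to this asymmetry: only the \emph{right} adjoints are claimed to preserve Cartesian edges (if $f^*$ were $t$-exact, the left adjoints $\tau_{\leq 0}$ would preserve them too, which is not asserted). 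Consequently your argument for part (1) — that a $\pi$-Cartesian edge of $\T\V$ between connective objects stays in $\T^{\geq 0}\V$, so that $\T^{\geq 0}\V\lrar\V$ is Cartesian with the same Cartesian edges — does not go through; in general the Cartesian lift of $f$ at $E\in\T^{\geq 0}_Y\V$ is $\tau_{\geq 0}(f^*E)\lrar E$, not $f^*E\lrar E$. Similarly, your final verification that $\tau_{\geq 0}$ preserves Cartesian edges uses the full commutation $f^*\tau_{\geq 0}\simeq\tau_{\geq 0}f^*$; what is actually available (and sufficient) is the weaker $\tau_{\geq 0}f^*\tau_{\geq 0}\simeq\tau_{\geq 0}f^*$, which follows from exactness plus left $t$-exactness of $f^*$ alone, since the fiber and cofiber of $f^*\tau_{\geq 0}E\lrar f^*E$ are $(-1)$-coconnective.

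There is also a mismatch in how you invoke \cite[Proposition 7.3.2.6]{Lur14}: applied to the inclusions $\T^{\geq 0}\V\hookrightarrow\T\V$ and $\T^{\heartsuit}\V\hookrightarrow\T^{\leq 0}\V$ it would require fiberwise \emph{left} adjoints, but fiberwise these inclusions only have right adjoints $\tau_{\geq 0}$; so the horizontal adjunctions of the square are not produced this way. The paper's route avoids both problems: it localizes $\T\V$ (resp.\ $\T^{\leq 0}\V$) fiberwise at the morphisms whose cofiber is coconnective — a class preserved by the $f^*$ precisely because they are exact and left $t$-exact — and applies \cite[Proposition 2.1.4]{Hin16} to conclude that the resulting $\U^{\geq 0}$, $\U^{\heartsuit}$ are Cartesian fibrations over $\V$ and that the localization functors $\tau_{\geq 0}$ preserve Cartesian edges. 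Then \cite[Proposition 7.3.2.6]{Lur14} supplies fully faithful left adjoints relative to $\V$, identifying $\U^{\geq 0}\simeq\T^{\geq 0}\V$ and $\U^{\heartsuit}\simeq\T^{\heartsuit}\V$; this simultaneously proves (1), gives the horizontal adjunctions with Cartesian-edge-preserving right adjoints, and the vertical left adjoints $\tau_{\leq 0}$ then do come from \cite[Proposition 7.3.2.6]{Lur14} applied to the Cartesian-edge-preserving inclusions, as you proposed. Your treatment of $\T^{\leq 0}\V$ and of the vertical adjunctions is fine; the construction of the connective part and heart as fibrations, and of the $\tau_{\geq 0}$'s over $\V$, is the part that needs the localization argument rather than $t$-exactness of $f^*$.
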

In particular, note that $\T^{\heartsuit}\V\lrar \V$ is a Cartesian fibration whose fibers are (ordinary) abelian categories.
\begin{proof}
For each $X\in \V$, the fiber $\T_X\V$ comes equipped with a $t$-structure. In particular, for each $X$ there are coreflective localizations \cite[Proposition 1.2.1.5]{Lur14}
\begin{equation}\label{eq:connective and heart as localization}
\xymatrix@C=3pc{
\T_X^{\geq 0}\V\hspace{1pt}\ar@<1ex>@{^{(}->}[r] & T_X\V\ar@<1ex>[l]^-{\tau_{\geq 0}}_-{\perp} & & 
\T_X^{\heartsuit}\V\hspace{1pt}\ar@<1ex>@{^{(}->}[r] & T_X^{\leq 0}\V.\ar@<1ex>[l]^-{\tau_{\geq 0}}_-{\perp}
}\end{equation}
The functors $\tau_{\geq 0}$ realize their codomain as the localization of the domain at the $(-1)$-coconnective morphisms, i.e.\ those morphisms whose cofiber in $\T_X\V$ is contained in $\T^{\leq 0}_X\V$. By Condition (1), each morphisms $f\colon X\lrar Y$ in $\V$ induces a left $t$-exact functor $f^*\colon \T_Y\V\lrar \T_X\V$ between the fibers. It follows that the $(-1)$-coconnective morphisms in $\T\V$ and $\T^{\leq 0}\V$ are stable under the functors $f^*$. Let $\U^{\geq 0}$ (resp.\ $\U^{\heartsuit}$) denote the $\infty$-category obtained from $\T\V$ (resp.\ $\T^{\leq 0}\V$) by localizing at the $(-1)$-coconnective arrows in each fiber. By \cite[Proposition 2.1.4]{Hin16}, this yields maps of Cartesian fibrations (preserving Cartesian arrows)
\begin{equation}\label{eq:fiberwise connective cover}\vcenter{\xymatrix{
\T\V\ar[rr]^-{\tau_{\geq 0}}\ar[rd]_\pi & & \U^{\geq 0}\ar[ld] & & \T^{\leq 0}\V\ar[rd]_\pi\ar[rr]^-{\tau_{\geq 0}} & & \U^{\heartsuit}\ar[ld]\\
& \V & & & & \V.
}}\end{equation}
On the fiber over an object $X\in \V$, these maps can be identified with the localization functors from \eqref{eq:connective and heart as localization}. In particular, it follows from \cite[Proposition 7.3.2.6]{Lur14} that the localizations from \eqref{eq:fiberwise connective cover} both admit a left adjoint over $\V$. These left adjoints are (fiberwise) fully faithful and identify $\U^{\geq 0}$ and $\U^{\heartsuit}$ with the full subcategories $\T^{\geq 0}\V$ and $\T^{\heartsuit}\V$, respectively. 

In particular, this shows that the projections from $\T^{\geq 0}\V$ and $\T^{\heartsuit}\V$ to $\V$ are Cartesian fibrations, proving (1). Furthermore, the functors from \eqref{eq:fiberwise connective cover} provide the horizontal right adjoints (relative to $\V$) in (2). Finally, the inclusions $\T^{\heartsuit}\V\lrar \T^{\geq 0}\V$ and $\T^{\leq 0}\V\lrar \T\V$ admit left adjoints over $\V$ by \cite[Proposition 7.3.2.6]{Lur14}.
\end{proof}
\begin{define}
Let $\V$ be an SM $\infty$-category with finite limits. A $t$-orientation on $\T\V$ is \textbf{monoidal} if $\T^{\geq 0}\V$ is closed under the square-zero tensor product and contains the unit.
\end{define}
\begin{example}\label{e:postnikov for prestable on TS}
Consider the full subcategories of excisive functors $F\colon \cS^{\fin}_\ast\lrar \cS$
$$
\T^{\geq 0}\cS\subseteq \T\cS \qquad\qquad\qquad \T^{\leq 0}\cS\subseteq \T\cS
$$
such that for every $n$, the map $F(S^n)\lrar  F(\ast)$ has $n$-connected, resp.\ $n$-truncated fibers. This defines a $t$-orientation on $\T\cS$, whose restriction to each fiber $\T_X\cS\simeq \Fun(X, \Sp)$ consists of diagrams of connective, resp.\ coconnective spectra. Furthermore, this $t$-orientation is monoidal (the square zero monoidal structure simply being the Cartesian product).

In particular, the heart $\T^\heartsuit\cS$ can be identified with the $\infty$-category of \textbf{local systems of abelian groups}. The inclusion $\T^\heartsuit\cS\subseteq \T\cS$ sends a local system of abelian groups $\A$ to the corresponding parametrized Eilenberg--Maclane spectrum $\rH\A$.
\end{example}
\begin{example}\label{e:t-structure additive}
Let $\V$ be an additive presentable $\infty$-category, so that $\T\V\simeq \V\times \Sp(\V)$ (Example \ref{e:stable}). Then any $t$-structure on $\Sp(\V)$ determines a $t$-orientation on $\V$.
Now suppose that $\V$ is furthermore symmetric monoidal and recall that the square zero monoidal structure on $\T\V$ can be identified with the monoidal structure on $\V\times \Sp(\V)$ given by Remark \ref{r:square zero in additive setting}. From this description, one sees that a $t$-structure on $\Sp(\V)$ determines a \emph{monoidal} $t$-orientation on $\T\V$ if and only if $\Sp(\V)^{\geq 0}$ is closed under taking the tensor product in $\Sp(\V)$ with objects of the form $\Sigma^\infty(X)$, for $X\in \V$.

In particular, one can always construct a monoidal $t$-orientation on $\T\V$ using the $t$-structure on $\Sp(\V)$ whose connective part is the smallest full subcategory which is closed under colimits and contains all $\Sigma^\infty(X)$ with $X\in \V$. When $\V\simeq \Sp(\V)$ is stable, this simply produces the trivial $t$-structure and if $\V$ is prestable, this produces the canonical $t$-structure on $\Sp(\V)$ whose connective part is $\V$ itself. 
\end{example}
Let $\V$ be a SM $\infty$-category with finite limits and suppose that $\T\V$ carries a monoidal $t$-orientation. If $\O$ is an $\infty$-operad, we can use Proposition \ref{p:quillen-for-algebras} to identify the Cartesian fibration $\pi\colon \T\Alg_{\O}(\V)\lrar\Alg_{\O}(\V)$ with $\Alg_{\O}(\T\V)\lrar \Alg_{\O}(\V)$. Using this identification, consider the two full subcategories
$$
\T^{\geq 0}\Alg_{\O}(\V)=\Alg_{\O}(\T^{\geq 0}\V) \qquad\qquad \qquad \T^{\leq 0}\Alg_{\O}(\V)=\Alg_{\O}(\T^{\leq 0}\V)
$$
where we view $\T^{\geq 0}\V$ and $\T^{\leq 0}\V$ as full suboperads of $\T\V$. In other words, these are the full subcategories of $\O$-algebras in $\T\V$ whose underlying objects (for every colour $x\in \O$) are $0$-connective, resp.\ $0$-coconnective in $\T\V$.
\begin{pro}\label{p:t-structure on tangent of algebras}
These two full subcategories $\T^{\geq 0}\Alg_{\O}(\V)$ and $\T^{\leq 0}\Alg_{\O}(\V)$ define a monoidal $t$-orientiation on $\T\Alg_{\O}(\V)$. For every colour $x\in \O$, the forgetful functor $x^*\colon \T\Alg_\O(\V)\lrar \T\V$ is $t$-exact, i.e.\ it preserves both $0$-connective and $0$-coconnective objects.
\end{pro}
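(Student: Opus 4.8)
The plan is to transport everything across the equivalence $\T\Alg_{\O}(\V)\simeq \Alg_{\O}(\T\V)$ over $\Alg_{\O}(\V)$ of Proposition~\ref{p:tangent to algebras}, which is moreover symmetric monoidal for the square zero tensor products: it arises by restricting the symmetric monoidal Boardman--Vogt equivalence $\Alg_{\O}\big(\Fun(\cS^{\fin}_\ast,\V),\otimes_{\lev}\big)\simeq \Fun\big(\cS^{\fin}_\ast,\Alg_{\O}(\V)\big)$ to excisive diagrams, and the localizations cutting out these excisive diagrams are precisely the ones defining the square zero tensor products on $\T\V$ and on $\T\Alg_{\O}(\V)$. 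Under this identification $\T^{\geq 0}\Alg_{\O}(\V)=\Alg_{\O}(\T^{\geq 0}\V)$ and $\T^{\leq 0}\Alg_{\O}(\V)=\Alg_{\O}(\T^{\leq 0}\V)$ (full suboperads) are by definition the full subcategories of $\O$-algebras in $\T\V$ whose value $x^{*}E$ at each colour $x\in\O$ is $0$-connective, resp.\ $0$-coconnective in $\T\V$; so the forgetful functors $x^{*}\colon \T\Alg_{\O}(\V)\lrar \T\V$ preserve $\T^{\geq 0}$ and $\T^{\leq 0}$ tautologically, and the asserted $t$-exactness is immediate once the $t$-orientation is known to exist. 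Thus all the content lies in checking Definition~\ref{d:t-structure}(1), (2) and monoidality, and I would do this using two structural inputs. First, for each $A\in\Alg_{\O}(\V)$ the functors $x^{*}\colon \T_{A}\Alg_{\O}(\V)\lrar \T_{A(x)}\V$ are jointly conservative and preserve limits \cite[Corollary 3.2.2.4]{Lur14}; since source and target are stable, they are in fact exact. Second, since the $t$-orientation on $\T\V$ is monoidal, the inclusion $\T^{\geq 0}\V\hrar\T\V$ is symmetric monoidal, so its right adjoint $\tau_{\geq 0}$ --- which by Lemma~\ref{l:heart} lies over $\V$ --- is lax symmetric monoidal, hence induces a functor $\tau_{\geq 0}\colon \Alg_{\O}(\T\V)\lrar \Alg_{\O}(\T^{\geq 0}\V)=\T^{\geq 0}\Alg_{\O}(\V)$ right adjoint to the inclusion, lying over $\Alg_{\O}(\V)$, and computed colourwise (so $x^{*}\tau_{\geq 0}E\simeq \tau_{\geq 0}x^{*}E$); in particular it restricts to connective covers on each fibre $\T_{A}\Alg_{\O}(\V)$.

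For Definition~\ref{d:t-structure}(1) --- equivalently, that $\T^{\leq 0}\Alg_{\O}(\V)\lrar\Alg_{\O}(\V)$ is a Cartesian fibration and the inclusion into $\T\Alg_{\O}(\V)$ preserves Cartesian edges --- I would use that a $\pi$-Cartesian arrow $E\lrar F$ in $\T\Alg_{\O}(\V)\simeq\Alg_{\O}(\T\V)$ is $\pi$-Cartesian colourwise: $\pi$-Cartesian arrows in $\T\V$ are pullbacks of maps of constant diagrams (Remark~\ref{r:excisive approximation preserves cartesian arrows}), and $\pi$-Cartesian arrows of $\O$-algebras are detected on underlying objects. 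Hence if $F$ is colourwise $0$-coconnective, so is $E$, by condition (1) of the $t$-orientation on $\T\V$.

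For Definition~\ref{d:t-structure}(2), fix $A\in\Alg_{\O}(\V)$ and verify the three axioms of a $t$-structure for $(\T^{\geq 0}_{A},\T^{\leq 0}_{A})$ on the stable $\infty$-category $\T_{A}\Alg_{\O}(\V)$, where $\T^{\geq 0}_{A}$, $\T^{\leq 0}_{A}$ denote the colourwise $0$-connective, resp.\ $0$-coconnective objects. Closure of $\T^{\geq 0}_{A}$ under suspension and of $\T^{\leq 0}_{A}$ under loops follows from exactness of the $x^{*}$ and the analogous closures in each $\T_{A(x)}\V$. For the truncation triangles, the counit $\tau_{\geq 0}E\lrar E$ has cofibre whose value at $x$ is $\cofib(\tau_{\geq 0}x^{*}E\to x^{*}E)\simeq \tau_{\leq -1}x^{*}E$, so $\tau_{\geq 0}E\in\T^{\geq 0}_{A}$ and the cofibre lies in $\T^{\leq -1}_{A}$. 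Finally, for $E'\in\T^{\geq 0}_{A}$ and $E''\in\T^{\leq -1}_{A}$ one has $\tau_{\geq 0}E''\simeq 0$ (it is colourwise zero and the $x^{*}$ are jointly conservative), so the adjunction between the inclusion and $\tau_{\geq 0}$ gives $\Map_{\T_{A}\Alg_{\O}(\V)}(E',E'')\simeq \Map_{\T^{\geq 0}_{A}}(E',\tau_{\geq 0}E'')\simeq\ast$, the required orthogonality. This gives the $t$-structure with truncations computed colourwise, so each $x^{*}$ is $t$-exact on fibres, finishing the proof of $t$-exactness. Monoidality is then read off colourwise: the unit of $\T\Alg_{\O}(\V)$ is colourwise the unit of $\T\V$, which lies in $\T^{\geq 0}\V$ since that $t$-orientation is monoidal, and for $E,F\in\T^{\geq 0}\Alg_{\O}(\V)$ the square zero tensor product satisfies $x^{*}(E\otimes F)\simeq x^{*}E\otimes x^{*}F\in\T^{\geq 0}\V$, again by monoidality on $\T\V$.

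The main obstacle is the bookkeeping packed into the first paragraph rather than the verification of the axioms: one must confirm that the equivalence of Proposition~\ref{p:tangent to algebras} is symmetric monoidal for the square zero tensor products, and that the lax symmetric monoidal functor $\tau_{\geq 0}$ on $\T\V$ genuinely induces a right adjoint to $\T^{\geq 0}\Alg_{\O}(\V)\hrar\T\Alg_{\O}(\V)$ over $\Alg_{\O}(\V)$ computed on underlying objects --- both of which ultimately rest on compatibility of the Boardman--Vogt tensor product with the relevant localizations and with the adjunctions of Lemma~\ref{l:heart}. Once these structural facts are in place, every remaining step is the routine colourwise argument sketched above.
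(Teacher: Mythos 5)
Your proposal is correct and follows essentially the same route as the paper's proof: identify $\T\Alg_{\O}(\V)\simeq\Alg_{\O}(\T\V)$ via Proposition \ref{p:tangent to algebras}, detect Cartesian arrows colourwise for condition (1), use the lax symmetric monoidal $\tau_{\geq 0}$ of Lemma \ref{l:heart} to get a fiberwise colocalization computed colourwise for condition (2), and check monoidality and $t$-exactness colourwise. The only divergence is cosmetic: where you verify the $t$-structure axioms on each fiber directly (truncation triangles and orthogonality via the adjunction), the paper instead invokes \cite[Proposition 1.2.1.16]{Lur14} to reduce to closure of the connective part under extensions.
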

\begin{proof}
To see that the $t$-orientation is monoidal, note that the full subcategory $\T^{\geq 0}\Alg_{\O}(\V)\simeq \Alg_{\O}(\T^{\geq 0}\V)\subseteq \Alg_{\O}(\T\V)$ is closed under tensor products, since a symmetric monoidal functor induces a symmetric monoidal functor on $\O$-algebras.

To verify condition (1) of Definition \ref{d:t-structure}, notice that a morphism in $\Alg_{\O}(\T\V)$ is $\pi$-Cartesian if and only if for every colour $x\in \O$, its image under $x^*\colon \Alg_{\O}(\T\V)\lrar \T\V$ is a Cartesian arrow. This immediately implies that for every Cartesian arrow in $\Alg_{\O}(\T\V)$ whose codomain is contained in $\T^{\leq 0}\Alg_{\O}(\V)$, the domain is contained in $\T^{\leq 0}\Alg_{\O}(\T\V)$ as well.

For condition (2), note that Lemma \ref{l:heart} provides a colocalization $\T^{\geq 0}\V\adj \T\V$ whose counit maps to an equivalence in $\V$. Since the inclusion $\T^{\geq 0}\V\lrar \T\V$ is symmetric monoidal, its right adjoint $\tau_{\geq 0}$ is lax symmetric monoidal. We therefore obtain a colocalization at the level of $\O$-algebras, whose counit maps to an equivalence in $\Alg_{\O}(\V)$. Furthermore, this commutes with the forgetful functors for each colour $x\in\O$
$$\xymatrix{
\T^{\geq 0}\Alg_{\O}(\V)\simeq \Alg_{\O}(\T^{\geq 0}\V)\ar@<1ex>@{^{(}->}[r]\ar[d]_{x^*} & \Alg_{\O}(\T\V)\ar@<1ex>[l]_-{\perp}\ar[d]^{x^*}\\
\T^{\geq 0}\V\ar@<1ex>@{^{(}->}[r] & \T\V.\ar@<1ex>[l]
}$$
On fibers over an $\O$-algebra $A$, we therefore obtain commuting colocalizations
$$\xymatrix{
\T^{\geq 0}_A \Alg_{\O}(\V)\ar@<1ex>@{^{(}->}[r]\ar[d]_{x^*} & \T_A\Alg_{\O}(\V)\ar@<1ex>[l]_-{\perp}^{\tau_{\geq 0}}\ar[d]^{x^*}\\
\T^{\geq 0}_{x^*A}\V\ar@<1ex>@{^{(}->}[r] & \T_{x^*A}\V.\ar@<1ex>[l]^{\tau_\geq 0}
}$$
In particular, it follows that an object $X\in \T_A\Alg_{\O}(\V)$ is:
\begin{itemize}
\item contained in $\T_A^{\geq 0}\Alg_{\O}(\V)$ if and only if $\tau_{\geq 0}(X)\simeq X$.
\item contained in $\T_A^{\leq -1}\Alg_{\O}(\V)$ if and only if for every colour $x\in \O$, $x^*X\in \T_{x^*A}\V$ is $(-1)$-coconnective, i.e.\ $\tau_{\geq 0}(x^*X)\simeq 0$. In turn, this is equivalent to $\tau_{\geq 0}(X)\simeq 0$ in $\T_A\Alg_{\O}(\V)$.
\end{itemize}
By \cite[Proposition 1.2.1.16]{Lur14}, the subcategories $\T_A^{\geq 0}\Alg_{\O}(\V)$ and $\T_A^{\leq 0}\Alg_{\O}(\V)$ then determine a $t$-structure on $\T_A\Alg_{\O}(\V)$ if and only if $\T_A^{\geq 0}\Alg_{\O}(\V)$ is closed under extensions. This holds because the forgetful functors $x^*$ preserve extensions and $\T_{x^*A}^{\geq 0}\V$ is closed under extensions.
\end{proof}
\begin{example}
Suppose that $\V$ is a prestable SM $\infty$-category and \(\O\) an \(\infty\)-operad. Endowing $\T\V \simeq \V \times \Sp(\V)$ with its ``constant'' monoidal $t$-orientation of Example \ref{e:t-structure additive} and applying Proposition \ref{p:t-structure on tangent of algebras} we obtain a $t$-orientation on $\T\Alg_{\O}(\V)$, and hence a $t$-structure on $\T_A\Alg_{\O}(\V)$ for any $\O$-algebra $A$. Under the identification % by . For every $\O$-algebra $A$ in $\V$, the resulting $t$-structure on 
$\T_A\Alg_{\O}(\V)\simeq \Mod_A\big(\Sp(\V)\big)$ (see Example \ref{e:square zero extensions in stable}) %. This 
this is simply the $t$-structure whose connective part is given by $\Mod_A(\V)\subseteq \Mod_A\big(\Sp(\V)\big)$.
\end{example}

\subsection{Tangent bundle of enriched $\infty$-categories}
To prove a version of Proposition \ref{p:t-structure on tangent of algebras} for enriched $\infty$-categories, we will need a description of the tangent bundle to enriched $\infty$-categories along the lines of Proposition \ref{p:tangent to algebras}:
\begin{pro}\label{p:tangent to categories}
Let $\V$ be a differentiable presentable SM $\infty$-category such that $1_\V$ is compact. Then there exists a natural equivalence of SM $\infty$-categories
\begin{equation}\label{d:L from previous paper}\vcenter{\xymatrix{
\Cat(\T\V)\ar[rd]_-{\Cat(p)}\ar[rr]^-{\L}_-\sim &  & \T\Cat(\V)\ar[ld]^-\pi\\
& \Cat(\V)
}}\end{equation}
where $\Cat(p)$ is induced by the (monoidal) projection $p\colon \T\V\lrar \V$ and $\pi$ is the natural projection.
\end{pro}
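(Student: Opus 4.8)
The plan is to follow the strategy of Proposition~\ref{p:tangent to algebras}: first prove the statement for categorical algebras, then descend to complete objects using the cube and tower lemmas. Writing $\T\V=\Exc(\cS^{\fin}_\ast,\V)$ as the monoidal localization of $(\Fun(\cS^{\fin}_\ast,\V),\otimes_{\lev})$ from Proposition~\ref{p:tens}, I would combine the symmetric monoidal functor $\vphi\colon\Alg_{\Cat}(\Fun(\cS^{\fin}_\ast,\V))\lrar\Fun(\cS^{\fin}_\ast,\Alg_{\Cat}(\V))$ of~\eqref{d:categorical algebras in diagrams} with Proposition~\ref{p:tangent to algebras} applied fibrewise to the $\infty$-operads $\O_X$ governing categorical algebras with a fixed space of objects $X$ (so $\Alg_{\O_X}(\T\V)\simeq\T\Alg_{\O_X}(\V)$). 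As in Remark~\ref{r:towers of cat alg}, this exhibits the restriction of $\vphi$ as a fully faithful symmetric monoidal functor $\Alg_{\Cat}(\T\V)\hookrightarrow\Fun(\cS^{\fin}_\ast,\Alg_{\Cat}(\V))$ with essential image the excisive diagrams whose space of objects is constant; over $\Alg_{\Cat}(\V)$ it identifies $\Alg_{\Cat}(\T\V)$ with $\T\Alg_{\Cat}(\V)\times_{\T\cS}\cS$, the fibre product of $\T\Ob\colon\T\Alg_{\Cat}(\V)\to\T\cS$ with the zero section $\cS\hookrightarrow\T\cS$. Differentiability of $\V$ is used here through Remark~\ref{r:excisive approximation preserves cartesian arrows}, so that excisive approximation is compatible with the Cartesian fibration $\Ob$.

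The functor $\L$ is then obtained by composing with objectwise completion:
$$\Cat(\T\V)\hookrightarrow\Alg_{\Cat}(\T\V)\overset{\vphi}{\lrar}\Fun(\cS^{\fin}_\ast,\Alg_{\Cat}(\V))\overset{(-)^\wedge}{\lrar}\Fun(\cS^{\fin}_\ast,\Cat(\V)),$$
and the work is to show $\L$ factors through $\Exc(\cS^{\fin}_\ast,\Cat(\V))=\T\Cat(\V)$ and is an equivalence over $\Cat(\V)$. For the first point one must show that objectwise completion preserves the pullback squares and limit towers cutting out $\T\Cat(\V)$ inside $\Fun(\cS^{\fin}_\ast,\Cat(\V))$. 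This is where the cube and tower lemmas (Corollaries~\ref{c:main-point-cube} and~\ref{c:main-point-tower}) enter: one checks that for $\CC\in\Cat(\T\V)$ the associated excisive diagram $G=\vphi(\CC)$ sends the relevant arrows of $\cS^{\fin}_\ast$ to isofibrations of categorical algebras. These maps are the identity on the (constant) space of objects, while on mapping objects they are evaluation maps of excisive functors; applying $\Map_\V(1_\V,-)$ (which detects equivalences and, since $1_\V$ is compact, commutes with the colimits computing $\Omega^\infty$), one checks as in the proof of Theorem~\ref{t:main-theorem} that they induce bijections on components of equivalences. Granting this, the cube and tower lemmas show $\L(\CC)$ is excisive; functoriality, the identification $\pi\circ\L\simeq\Cat(p)$, and $0$-connectedness of the tower $G$ follow as in the proof of Theorem~\ref{t:main-theorem}.

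It then remains to check $\L$ is an equivalence, which I would do fibrewise over $\D\in\Cat(\V)$: by the first step the fibre of $\Cat(\T\V)$ over $\D$ is the full subcategory of $\Sp(\Alg_{\Cat}(\V)_{/\D})$ on spectrum objects with constant space of objects, the fibre of $\T\Cat(\V)$ over $\D$ is $\Sp(\Cat(\V)_{/\D})$, and the comparison is induced by the completion functor $\Alg_{\Cat}(\V)_{/\D}\to\Cat(\V)_{/\D}$; once more the cube and tower lemmas allow one to lift any reduced excisive diagram into $\Cat(\V)_{/\D}$ to a reduced excisive diagram into $\Alg_{\Cat}(\V)_{/\D}$ with constant space of objects, producing an inverse to $\L$ on fibres. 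The symmetric monoidal refinement is then automatic, since $\vphi$ is symmetric monoidal, objectwise completion is a symmetric monoidal localization by~\cite[Proposition~5.7.14]{GH15}, and the fibrewise equivalences of the first step are symmetric monoidal by Proposition~\ref{p:tangent to algebras}; compatibility of $\L$ with the projections to $\Cat(\V)$ is built into the construction.

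The main obstacle is the descent step: showing both that objectwise completion preserves excisiveness and that the induced functor on fibres is an equivalence. Both reductions run through the cube and tower lemmas, whose hypothesis of an isofibration is exactly what forces the assumptions that $\V$ be differentiable and that $1_\V$ be compact; without these the transition maps of the relevant excisive diagrams need not be isofibrations, and completion need not commute with the limits that define $\T\Cat(\V)$.
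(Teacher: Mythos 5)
There are two genuine gaps, and the first is a step that is actually false. You claim that the objectwise completion of $\vphi(\CC)$ is already excisive, by arguing that its transition maps are isofibrations "as in the proof of Theorem \ref{t:main-theorem}". But in Theorem \ref{t:main-theorem} that argument rests on the \emph{hypothesis} that the Postnikov tower is $0$-connected, and there is no counterpart of that hypothesis for an excisive diagram. Concretely, take $\V=\cS$ and let $\CC\in\Cat(\T\cS)$ be the one-object category whose endomorphism object is the spectrum $E=\Sigma^{-1}\rH\ZZ\in\Sp\simeq\T_{\ast}\cS$, with algebra structure induced by the infinite-loop addition (i.e.\ the image under Proposition \ref{p:tangent to algebras} of the reduced excisive functor $S\mapsto\Omega^{\infty}(E\wedge S)$ regarded as taking values in monoids). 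Then $\vphi(\CC)(S)$ is the one-object categorical algebra with grouplike mapping space $\Omega^{\infty}(E\wedge S)$, so its completion is the classifying space $B\Omega^{\infty}(E\wedge S)$. Excisiveness of the completed diagram at the square exhibiting $S^1\simeq \Sigma S^0$ would force $B\Omega^{\infty}E\simeq \Omega B\Omega^{\infty}\Sigma E$, but the left-hand side is contractible while the right-hand side is $\Omega^{\infty}\rH\ZZ\simeq\ZZ$. Correspondingly, the zero-section map $\vphi(\CC)(\ast)\lrar\vphi(\CC)(S^1)$ is not an isofibration (the equivalence corresponding to $1\in\pi_{-1}E$ does not lift), so the cube lemma does not apply to these legs. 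This is exactly why $\L$ has to be defined via the universal property of the localization of $\Alg_{\Cat}(\T\V)$ at Dwyer--Kan equivalences, i.e.\ with the excisive approximation $(-)^{\exc}$ inserted after pointwise completion; the cube lemma is then needed only to show that completion commutes with pullback along \emph{constant} diagrams (the maps $F(S)\lrar F(\ast)$ are isofibrations, being the identity on objects and split by the basepoint). Also, differentiability of $\V$ and compactness of $1_\V$ have nothing to do with isofibrations: they enter through Remarks \ref{r:excisive approximation preserves cartesian arrows} and \ref{r:differentiable}, guaranteeing that $\Cat(\V)$ is again differentiable with compact unit so that $(-)^{\exc}$ preserves Cartesian arrows and the proposition can be iterated. (Incidentally, $\T\Cat(\V)$ is cut out by pullback conditions only; no "limit towers" occur here.)

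The second gap is the fiberwise equivalence, which is the real content of the statement and is not addressed by your sketch. First, identifying the fiber of $\Cat(p)$ over $\CC^{\wedge}$ with $\T_{\CC}\Alg_{\Cat}^{S}(\V)$ requires knowing that $\Cat(p)$ is a Cartesian fibration and that completion yields a pullback of fibrations; this is obtained by characterizing Dwyer--Kan equivalences in $\Alg_{\Cat}(\T\V)$ as precisely the $\Alg_{\Cat}(p)$-Cartesian arrows lying over Dwyer--Kan equivalences (using that $\Map(1_{\T\V},-)\simeq\Map(1_{\V},p(-))$) and applying \cite[Proposition 2.1.4]{Hin16} — none of this appears in your argument. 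More seriously, the assertion that the cube and tower lemmas let you lift an arbitrary reduced excisive diagram in $\Cat(\V)_{/\D}$ to one in $\Alg_{\Cat}(\V)_{/\D}$ with constant space of objects is not a limit-preservation statement but a rectification statement, and those lemmas provide no such lifts, let alone a functorial inverse to $\L$ on fibers. The needed comparison, that completion induces an equivalence $\Sp\big(\Alg_{\Cat}^{S}(\V)_{/\CC}\big)\simeq\Sp\big(\Cat(\V)_{/\CC^{\wedge}}\big)$, is precisely \cite[Proposition 3.1.9]{part2}, proved there by model-categorical methods (using a presentation of $\V$ by a monoidal model category with all objects cofibrant, which is where presentability is used); without invoking that result or supplying an argument of comparable substance, your proof of the fiberwise equivalence is missing.
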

\begin{rem}\label{r:differentiable}
Recall from \cite[Definition 6.1.1.6]{Lur14} that a presentable $\infty$-category $\V$ is differentiable if the sequential colimit functor $\colim\colon \Fun(\NN, \V)\lrar \V$ is left exact. In particular, any compactly generated $\infty$-category is differentiable.

If a presentable $\infty$-category $\V$ is differentiable, then $\Cat(\V)$ is differentiable as well. To see this, pick a monoidal model category $\mathbf{V}$ presenting $\V$ in which all objects are cofibrant \cite[Remark 4.1.8.9]{Lur14}, so that $\Cat(\V)$ arises from the model category $\Cat(\mathbf{V})$ on $\mathbf{V}$-enriched categories \cite{Hau15}. It then follows from \cite[Corollary 3.1.12]{part2} that $\Cat(\V)$ is again differentiable. 
Furthermore, the monoidal unit $1_{\Cat(\V)}$ is compact. Indeed, the corepresentable functor $\Map(1_{\Cat(\V)}, -)$ can be identified with the functor taking spaces of objects. This functor can be factored as
$$\xymatrix{
\Cat(\V)\ar[r] & \Cat(\cS)\simeq \Cat_\infty\ar[r]^-{\mathrm{Core}} & \cS
}$$
where the first functor preserves filtered colimits because $1_\V$ was compact and the second functor, taking the core (or maximal sub-$\infty$-groupoid), preserves filtered colimits as well. It follows that Proposition \ref{p:tangent to categories} can be applied inductively.
\end{rem}
\begin{proof}
The idea will be to first work at the level of categorical algebras and then localize at the Dwyer--Kan equivalences to obtain the desired triangle on enriched $\infty$-categories. We then prove that $\L$ induces an equivalence on the fibers over a fixed object in $\Cat(\V)$ by using the results of \cite{part2}. 

\medskip 

\noindent \textbf{Step 1.} Consider the natural symmetric monoidal functor $\vphi\colon \Alg_{\Cat}(\Fun(\cS^{\fin}_\ast, \V))\lrar \Fun(\cS^{\fin}_{\ast}, \Alg_{\Cat}(\V))$ from \eqref{d:categorical algebras in diagrams}, where $\Fun(\cS^{\fin}_\ast, \V)$ comes equipped with the pointwise tensor product from $\V$. By Remark \ref{r:towers of cat alg}, this is a fully faithful inclusion, whose essential image consists of diagrams of categorical algebras whose underlying diagram of spaces of objects is essentially constant. In particular, note that $\vphi$ sends Dwyer--Kan equivalences between categorical algebras in $\Fun(\cS^{\fin}_\ast, \V)$ to $\cS^{\fin}_\ast$-diagrams of Dwyer--Kan equivalences in $\Alg_{\Cat}(\V)$.

The proof of Proposition \ref{p:tower-algebra} shows that this restricts to a functor $\vphi\colon \Alg_{\Cat}(\T\V)\lrar \T(\Alg_{\Cat}(\V))$, which fits into a pullback square of symmetric monoidal functors (see Remark \ref{r:towers of cat alg})
$$\xymatrix{
\Alg_{\Cat}(\T\V)\hspace{2pt}\ar@{^{(}->}[r]^-\vphi\ar@<-1pt>[d] & \T\Alg_{\Cat}(\V)\ar[d]\\
\cS\hspace{2pt}\ar@{^{(}->}[r] & \T\cS.
}$$
Note that $\vphi$ commutes with evaluation at $\ast\in \cS^{\fin}_{\ast}$, so that it fits into a commuting triangle of symmetric monoidal functors
$$\xymatrix{
\Alg_{\Cat}(\T\V)\hspace{2pt}\ar@{^{(}->}[rr]^\vphi \ar[rd]_{\Alg_{\Cat}(p)} & & \T\Alg_{\Cat}(\V)\ar[ld]^\pi\\
& \Alg_{\Cat}(\V) & 
}$$
where $\Alg_{\Cat}(p)$ is induced by $p\colon \T\V\lrar \V$ and $\pi$ is the natural projection. By functoriality of tangent $\infty$-categories, the functor $\Ob\colon \Alg_{\Cat}(\V)\lrar \cS$  induces a functor between tangent bundles $\T\Alg_{\Cat}(\V)\lrar \T\cS$ which preserves Cartesian arrows. Consequently, given a $\pi$-Cartesian arrow in $\T\Alg_{\Cat}(\V)$ whose codomain is contained in $\Alg_{\Cat}(\T\V)$, i.e.\ the underlying parametrized spectrum of objects is constant, then its domain is contained in $\Alg_{\Cat}(\T\V)$ as well. In particular, $\Alg_{\Cat}(p)$ is a Cartesian fibration and $\vphi$ preserves Cartesian arrows.

\medskip

\noindent\textbf{Step 2.} To obtain the desired functor $\L$ as in \eqref{d:L from previous paper}, we need to invert the Dwyer--Kan equivalences. To this end, note that localization at the Dwyer--Kan equivalence determines a commuting diagram on tangent $\infty$-categories
$$\xymatrix{
\T\Alg_{\Cat}(\V)\ar@<1ex>[r]\ar[d] & \T\Cat(\V)\ar[d]\ar@<1ex>@{_{(}->}[l]_-{\perp}\\
\Alg_{\Cat}(\V)\ar@<1ex>[r] & \Cat(\V)\ar@<1ex>@{_{(}->}[l]_-{\perp}.
}$$
Let us say that a map in $\T\Alg_{\Cat}(\V)$ is a Dwyer--Kan equivalence if it becomes an equivalence in $\T\Cat(\V)$. Then $\pi$ preserves Dwyer--Kan equivalences. Furthermore, $\vphi$ sends a Dwyer--Kan equivalence in $\Alg_{\Cat}(\T\V)\subseteq \Alg_{\Cat}\big(\Fun(\cS^{\fin}_\ast, \V)\big)$ to a pointwise Dwyer--Kan equivalence in $\T\Alg_{\Cat}(\V)\subseteq \Fun\big(\cS^{\fin}_\ast, \Alg_{\Cat}(\V)\big)$. Such pointwise Dwyer--Kan equivalences certainly become equivalences in $\T\Cat(\V)$. We can therefore inverting the Dwyer--Kan equivalences, and obtain the desired triangle of symmetric monoidal functors \eqref{d:L from previous paper} \cite[Proposition 4.1.7.4]{Lur14}. 

To verify that the functor $\L$ is an equivalence, we will first check that $\pi$ and $\Cat(p)$ are Cartesian fibrations and that $\L$ preserves Cartesian arrows. Assuming this, it will then suffice to prove that $\L$ induces a fiberwise equivalence \cite[Corollary 2.4.4.4]{Lur09} (see Step 3).

Note that $\pi$ is clearly a Cartesian fibration, being the projection of the tangent bundle to $\Cat(\V)$. On the other hand, to see that $\Cat(p)$ is a Cartesian fibration, let us make the following two observations about Dwyer--Kan equivalences in $\Alg_{\Cat}(\T\V)$:
\begin{enumerate}\setlength{\itemsep}{4pt}
\item A map $f\colon \CC\lrar \DD$ in $\Alg_{\Cat}(\T\V)$ is fully faithful if and only if it is an $\Alg_{\Cat}(p)$-Cartesian lift of a fully faithful map in $\Alg_{\Cat}(\V)$. Indeed, $f$ is fully faithful if and only if each $f_*\colon \Map_{\CC}(x, y)\lrar \Map_{\DD}(x, y)$ is an equivalence in $\T\V$. In turn, this is equivalent to each $f_*$ being $p$-Cartesian and mapping to an equivalence in $\V$. This means precisely that $f$ is an $\Alg_{\Cat}(p)$-Cartesian arrow mapping to a fully faithful map in $\Alg_{\Cat}(\V)$.

\item $\Alg_{\Cat}(p)$ detects essentially surjective functors. Indeed, note that the monoidal unit of the square zero monoidal structure on $\T\V$ is equivalent to $0_p(1_\V)$, where $0_p$ denotes the left adjoint section of $p\colon \T\V\lrar \V$. Consequently, there is a natural equivalence of lax monoidal functors $\Map(1_{\T\V}, -)\simeq \Map(1_{\V}, p(-))$. It follows that the $\cS$-enriched categorical algebra underlying $\CC\in \Alg_{\Cat}(\T\V)$ is naturally equivalent to the $\cS$-enriched categorical algebra underlying the image of $\CC$ in $\Alg_{\Cat}(\V)$. Since being essentially surjective is determined on the underlying $\cS$-enriched categorical algebras, the claim follows.
\end{enumerate}
Combining (1) and (2), one sees that a map in $\Alg_{\Cat}(\T\V)$ is a Dwyer--Kan equivalence if and only if it is an $\Alg_{\Cat}(p)$-Cartesian arrow covering a Dwyer--Kan equivalence in $\Alg_{\Cat}(\V)$. It then follows from \cite[Proposition 2.1.4]{Hin16} that the induced functor between localizations $\Cat(p)\colon \Cat(\T\V)\lrar \Cat(\V)$ is a Cartesian fibration, which fits into a pullback square
\begin{equation}\label{d:localization diagram}\vcenter{\xymatrix{
\Alg_{\Cat}(\T\V)\ar[r]^-{(-)^\wedge}\ar[d]_{\Alg_{\Cat}(p)} & \Cat(\T\V)\ar[d]^{\Cat(p)}\\
\Alg_{\Cat}(\V)\ar[r]_-{(-)^\wedge} & \Cat(\V).
}}\end{equation}
In other words, for every categorical algebra $\CC$, the fiber of $\Cat(p)$ over $\CC^\wedge$ is equivalent to the fiber of $\Alg_{\Cat}(p)$ over $\CC$.

It remains to verify that $\L$ preserves Cartesian edges. Unraveling the definitions, the functor $\L$ arises as the top horizontal composite in a diagram
$$\scalebox{0.95}{\xymatrix@C=1.6pc{
\Cat(\T\V)\ar@{^{(}->}[r]\ar[d] & \Alg_{\Cat}(\T\V)\ar@{^{(}->}[r]^-{\vphi}\ar[d] & \Fun(\cS^{\fin}_{\ast}, \Alg_{\Cat}(\V)\big)\ar[d]\ar[r]^-{(-)^\wedge} & \Fun(\cS_{\ast}^{\fin}, \Cat(\V))\ar[d]\ar[r]^-{(-)^{\exc}} & \T\Cat(\V)\ar[d]\\
\Cat(\V)\ar@{^{(}->}[r] & \Alg_{\Cat}(\V)\ar[r]_= & \Alg_{\Cat}(\V)\ar[r]_-{(-)^\wedge} & \Cat(\V)\ar[r]_= & \Cat(\V).
}}$$
Here $(-)^\wedge$ takes the completion (pointwise in $\cS^{\fin}_\ast$) and $(-)^{\exc}$ takes the excisive approximation (Definition \ref{d:aff}). Note that the bottom composition is equivalent to the identity and that all vertical functors are Cartesian fibrations.

The first two functors $\Cat(\T\V)\lrar \Alg_{\Cat}(\T\V)$ and $\vphi$ preserve Cartesian arrows (by Remark \ref{r:excisive approximation preserves cartesian arrows} and Step 1) and the last functor $(-)^{\exc}$ preserves Cartesian arrows because $\Cat(\V)$ is differentiable (Remark \ref{r:excisive approximation preserves cartesian arrows} and \ref{r:differentiable}). It therefore remains to verify that $(-)^\wedge$ preserves Cartesian arrows contained in the essential image of $\vphi$. To see this, let $f\colon \CC\lrar \DD$ be a map of categorical algebras in $\V$ and let $F\colon \cS^{\fin}_\ast\lrar \Alg_{\Cat}(\V)$ be a diagram in the image of $\vphi$ such that $F(*)\simeq \DD$. Then a Cartesian lift of $f$ is given by the canonical map from the fiber product
$$
F\times_{\Delta(\DD)} \Delta(\CC)\lrar F
$$
where $\Delta(-)$ takes the constant $\cS^{\fin}_\ast$-diagram. To see that $(-)^{\wedge}$ sends this to a Cartesian arrow in $\Fun(\cS^{\fin}_\ast, \Cat(\V))$, it suffices to verify that the natural transformation
$$
F^\wedge\times_{\Delta(\DD^{\wedge})} \Delta(\CC^\wedge)\lrar \big(F\times_{\Delta(\DD)} \Delta(\CC)\big)^\wedge
$$
is an equivalence, since Cartesian arrows in $\Fun(\cS^{\fin}_\ast, \Cat(\V))$ are also given by the canonical map from the fiber product. When evaluated at a finite pointed space $S$, this map can be identified with the map in $\Cat(\V)$
\begin{equation}\label{d:completion of cartesian}
F(S)^\wedge\times_{F(\ast)^\wedge} \CC^\wedge\lrar \big(F(S)\times_{F(\ast)} \CC\big)^\wedge. 
\end{equation}
But now notice that the map $F(S)\lrar F(\ast)$ is an equivalence on spaces of objects (since $F$ was in the image of $\vphi$) and admits a section $F(\ast)\lrar F(S)$  induced by the basepoint $\ast\lrar S$. This implies that $F(S)\lrar F(\ast)$ is an isofibration, so that Corollary \ref{c:main-point-cube} implies that the map \eqref{d:completion of cartesian} is indeed an equivalence.

\medskip

\noindent \textbf{Step 3.} 
It remains to verify that $\L$ induces an equivalence between the fiber over each object in $\Cat(\V)$. To this end, let us fix a categorical algebra $\CC$ with a \emph{set} of objects $S$ and let $\CC^\wedge\in \Cat(\V)$ be its completion. By \cite[Theorem 5.3.17]{GH15}, every $\V$-enriched $\infty$-category arises in this way. 

Let us now describe the fiber of $\Cat(p)\colon \Cat(\T\V)\lrar \Cat(\V)$ over $\CC^\wedge$ in more detail. Note that the left vertical fibration in the pullback square \eqref{d:localization diagram} arises as the Grothendieck construction $\int_{X\in\cS} \Alg_{\Cat}^X(\T\V)\lrar \int_{X\in \cS} \Alg_{\Cat}^X(\V)$. Consequently, there are equivalences
\begin{align*}
\T_{\CC}\Alg_{\Cat}^S(\V)&\simeq \Alg_{\Cat}^S(\T\V)\times_{\Alg_{\Cat}^S(\V)} \{\CC\}\\
&\simeq \Alg_{\Cat}(\T\V)\times_{\Alg_{\Cat}(\V)}\{\CC\}\simeq \Cat(\T\V)\times_{\Cat(\V)} \{\CC^\wedge\}.
\end{align*}
Here the first equivalence is Proposition \ref{p:tangent to algebras}, applied to the operad $\O_S$ whose algebras are $\V$-enriched categorical algebras with object set $S$. The last equivalence follows from the pullback square \eqref{d:localization diagram}. It therefore suffices to verify that the composite functor
$$\xymatrix{
\L_{\CC}\colon \T_{\CC}\Alg_{\O_S}(\V)\simeq \Cat(\T\V)\times_{\Cat(\V)} \{\CC^\wedge\}\ar[r] & \T_{\CC^\wedge}\Cat(\V)
}$$
is an equivalence. Unraveling the definitions, $\L_\CC$ arises from the functor 
$$\xymatrix{
\Alg_{\Cat}^S(\V)\ar[r] & \Alg_{\Cat}(\V)\ar[r]^-{(-)^\wedge} & \Cat(V)
}$$
from $\V$-enriched categorical algebras with object set $S$ to all $\V$-enriched $\infty$-categories, by taking the tangent $\infty$-category at $\CC$. The fact that $\L_{\CC}$ is an equivalence is then exactly \cite[Proposition 3.1.9]{part2}: indeed, we can present the compactly generated monoidal $\infty$-category $\V$ by a monoidal model category $\mathbf{V}$ all of whose objects are cofibrant \cite[Remark 4.1.8.9]{Lur14}, and $\Alg_{\Cat}^S(\V)\lrar \Cat(\V)$ then arises from the obvious functor $\Alg_{\Cat}^S(\mathbf{V})\lrar \Cat(\mathbf{V})$ (which preserves weak equivalences) between model categories of $\mathbf{V}$-enriched categories (see \cite{Hau15}). It was shown in \cite[Proposition 3.1.9]{part2} that this functor of model categories induces an equivalence on tangent $\infty$-categories at each $\CC\in \Alg_{\Cat}^S(\V)$.
\end{proof}
\begin{pro}\label{p:t-structure on tangent of categories}
Let $\V$ be a differentiable presentable SM $\infty$-category such that $1_\V$ is compact. If $\T\V$ carries a monoidal $t$-orientation, then the tangent bundle $\T\Cat(\V)$ carries a monoidal $t$-orientation as well.
\end{pro}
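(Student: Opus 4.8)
The plan is to transport the monoidal $t$-orientation on $\T\V$ across the equivalence of symmetric monoidal $\infty$-categories $\L\colon\Cat(\T\V)\overset{\sim}{\lrar}\T\Cat(\V)$ over $\Cat(\V)$ supplied by Proposition~\ref{p:tangent to categories}, in exact analogy with the way Proposition~\ref{p:t-structure on tangent of algebras} transports a monoidal $t$-orientation from $\V$ to $\Alg_\O(\V)$. Concretely, define $\T^{\geq 0}\Cat(\V)$ and $\T^{\leq 0}\Cat(\V)$ to be the full subcategories of $\T\Cat(\V)\simeq\Cat(\T\V)$ spanned by those $\T\V$-enriched $\infty$-categories all of whose mapping objects lie in $\T^{\geq 0}\V$, respectively $\T^{\leq 0}\V$. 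It then remains to check Conditions (1) and (2) of Definition~\ref{d:t-structure} and monoidality.

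For Condition (1), note that since $\L$ is an equivalence over $\Cat(\V)$, it carries $\pi$-Cartesian arrows of $\T\Cat(\V)$ to $\Cat(p)$-Cartesian arrows of $\Cat(\T\V)$. By the analysis of $\Cat(p)$ in Step~2 of the proof of Proposition~\ref{p:tangent to categories} (together with the pullback square \eqref{d:localization diagram}), the $\Cat(p)$-Cartesian lift of a map $g\colon\CC_E\to\CC_F$ in $\Cat(\V)$ with target $\DD\in\Cat(\T\V)$ over $\CC_F$ is obtained, up to Dwyer--Kan equivalence, by restricting each mapping object of $\DD$ along the induced map of mapping objects, so that $\Map_{g^*\DD}(x,y)\simeq g^*\Map_{\DD}(gx,gy)$. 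If $\DD\in\T^{\leq 0}\Cat(\V)$, then each $\Map_{\DD}(gx,gy)$ is $0$-coconnective in $\T\V$, so Condition (1) of the $t$-orientation on $\T\V$ forces every $g^*\Map_{\DD}(gx,gy)$ to be $0$-coconnective, whence $g^*\DD\in\T^{\leq 0}\Cat(\V)$; thus $\T^{\leq 0}\Cat(\V)$ is stable under $\pi$-Cartesian arrows. For Condition (2), fix $\CC\in\Cat(\V)$ and, using \cite[Theorem~5.3.17]{GH15}, choose a categorical algebra $\CC_0$ with a \emph{set} of objects $S$ and an equivalence $\CC\simeq\CC_0^\wedge$. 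Step~3 of the proof of Proposition~\ref{p:tangent to categories} then yields equivalences
$$
\T_{\CC}\Cat(\V)\ \simeq\ \Cat(\T\V)\times_{\Cat(\V)}\{\CC_0^\wedge\}\ \simeq\ \Alg_{\O_S}(\T\V)\times_{\Alg_{\O_S}(\V)}\{\CC_0\}\ \simeq\ \T_{\CC_0}\Alg_{\O_S}(\V),
$$
where $\O_S$ is the $\infty$-operad whose algebras are $\V$-enriched categorical algebras with object set $S$ and the last step is Proposition~\ref{p:tangent to algebras}. Under these equivalences the subcategories $\T^{\geq 0}\Cat(\V)\cap\T_{\CC}\Cat(\V)$ and $\T^{\leq 0}\Cat(\V)\cap\T_{\CC}\Cat(\V)$ correspond exactly to $\Alg_{\O_S}(\T^{\geq 0}\V)$ and $\Alg_{\O_S}(\T^{\leq 0}\V)$, because the underlying object of an $\O_S$-algebra at the colour $(x,y)$ is precisely the mapping object $\Map(x,y)$ and because completion does not change mapping objects up to equivalence. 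Proposition~\ref{p:t-structure on tangent of algebras}, applied to the given monoidal $t$-orientation on $\T\V$, then shows this pair is a $t$-structure on $\T_{\CC_0}\Alg_{\O_S}(\V)\simeq\T_{\CC}\Cat(\V)$.

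For monoidality: via $\L$ the square-zero tensor product on $\T\Cat(\V)$ corresponds to the symmetric monoidal structure on $\Cat(\T\V)$ induced by the square-zero tensor product on $\T\V$, which on categorical algebras is computed on mapping objects by $\Map_{\CC\otimes\DD}\big((x_0,y_0),(x_1,y_1)\big)\simeq\Map_{\CC}(x_0,x_1)\otimes_{\T\V}\Map_{\DD}(y_0,y_1)$ followed by completion. Since the $t$-orientation on $\T\V$ is monoidal, $\T^{\geq 0}\V$ is closed under $\otimes_{\T\V}$ and contains $1_{\T\V}$; as completion is a Dwyer--Kan equivalence it does not affect the mapping objects, so $\T^{\geq 0}\Cat(\V)$ is closed under the square-zero tensor product and contains the unit $[0]_{1_{\T\V}}^\wedge$. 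Hence $\big(\T^{\geq 0}\Cat(\V),\T^{\leq 0}\Cat(\V)\big)$ is a monoidal $t$-orientation on $\T\Cat(\V)$, and Lemma~\ref{l:heart} supplies the attendant Cartesian fibrations and adjunctions.

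I expect the only genuinely delicate point to be the bookkeeping in Conditions (1) and (2): one must make sure that the chain of equivalences of Step~3 and the description of $\Cat(p)$-Cartesian arrows of Step~2 in the proof of Proposition~\ref{p:tangent to categories} really do match the connective and coconnective subcategories mapping-object by mapping-object, and that this matching survives passage to completions — the latter being exactly what Corollary~\ref{c:main-point-cube} guarantees, just as in the proof of Proposition~\ref{p:tangent to categories}. Everything else is formal, once the equivalence $\L$ is recognized as the appropriate analogue of Proposition~\ref{p:tangent to algebras}.
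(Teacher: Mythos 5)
Your proposal is correct and follows essentially the same route as the paper: define the (co)connective parts of $\T\Cat(\V)\simeq\Cat(\T\V)$ mapping-object-wise, check Condition (1) via the description of Cartesian arrows from Step~2 of the proof of Proposition~\ref{p:tangent to categories}, reduce Condition (2) on each fiber to Proposition~\ref{p:t-structure on tangent of algebras} through the pullback square \eqref{d:localization diagram}, and deduce monoidality from that of the $t$-orientation on $\T\V$. The only cosmetic difference is that you fix a categorical algebra with a \emph{set} of objects via \cite[Theorem 5.3.17]{GH15} and the Step~3 equivalences, whereas the paper works directly with the operad $\O_X$ for the space of objects $X$; both reductions are equivalent.
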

\begin{proof}
Using the equivalence $\T\Cat(\V)\simeq \Cat(\T\V)$ from Proposition \ref{p:tangent to categories}, we define $\T^{\geq 0}\Cat(\V)\simeq \Cat(\T^{\geq 0}\V)$ and $\T^{\leq 0}\Cat(\V)\simeq \Cat(\T^{\leq 0}\V)$. Since $\Cat(-)$ preserves symmetric monoidal functors and fully faithful functors, $\T^{\geq 0}\Cat(\V)\subseteq \T\Cat(\V)$ is closed under the tensor product. 

To see that this determines a $t$-structure, one can repeat the argument given in Step 2 of the proof of Proposition \ref{p:tangent to categories} for $\T^{\geq 0}\V$ and $\T^{\leq 0}\V$ instead of $\T\V$ to show that $\Cat(\T^{\geq 0}\V)\lrar \Cat(\V)$ and $\Cat(\T^{\leq 0}\V)\lrar \Cat(\V)$ are Cartesian fibrations. This verifies condition (1) from Definition \ref{d:t-structure}. Both of these Cartesian fibrations fit into pullback diagrams of the form \eqref{d:localization diagram}. This implies that for a $\V$-enriched $\infty$-category $\CC$ with space of objects $X$, the intersections of $\T^{\geq 0}\Cat(\V)$ and $\T^{\leq 0}\Cat(\V)$ with the fiber over $\CC$ are equivalent to the fibers of
\begin{equation}\label{d:t-structure for cats}\xymatrix{
\Alg_{\Cat}^X(\T^{\geq 0}\V)\hspace{2pt}\ar@{^{(}->}[r] & \Alg_{\Cat}^X(\T\V) & \hspace{2pt}\Alg_{\Cat}^X(\T^{\leq 0}\V)\ar@{_{(}->}[l]
}\end{equation}
over $\CC\in \Alg_{\Cat}^X(\V)$. Since categories with a fixed space $X$ of objects are algebras over a certain $\infty$-operad $\O_X$, it follows from Proposition \ref{p:t-structure on tangent of algebras} that $\T^{\geq 0}\Cat(\V)$ and $\T^{\leq 0}\Cat(\V)$ restrict to a $t$-structure on the fiber over $\CC$.
\end{proof}

\subsection{Local systems of abelian groups on $(\infty, n)$-categories}
Applying Proposition \ref{p:t-structure on tangent of categories} inductively, starting with the $t$-structure on parametrized spectra from Example \ref{e:postnikov for prestable on TS}, we obtain the following:
\begin{cor}
Let $\C$ be an $(\infty, n)$-category. Then the tangent $\infty$-category $\T_{\C} \Cat_{(\infty, n)}$ carries a $t$-structure, in which an object $E$ is (co)connective if and only if for any two objects $x, y\in \C$, the functor 
$$
\Map_{(-)}(x, y)\colon \T_{\C}\Cat_{(\infty, n)}\lrar \T_{\Map_{\C}(x, y)} \Cat_{(\infty, n-1)}
$$
sends $E$ to a (co)connective object.
\end{cor}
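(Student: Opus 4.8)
The plan is to induct on $n$, applying Proposition~\ref{p:t-structure on tangent of categories} at each stage. Recall that $\Cat_{(\infty, n)} = \Cat_n(\cS)$, with $\Cat_{(\infty, 0)} = \cS$ and $\Cat_{(\infty, n)} = \Cat(\Cat_{(\infty, n-1)})$; accordingly, I will prove by induction on $n \geq 0$ the slightly stronger statement that $\T\Cat_{(\infty, n)}$ carries a \emph{monoidal} $t$-orientation, which for $n \geq 1$ is given, under the equivalence $\Cat(\T\Cat_{(\infty, n-1)}) \simeq \T\Cat_{(\infty, n)}$ of Proposition~\ref{p:tangent to categories}, by the full subcategories $\Cat(\T^{\geq 0}\Cat_{(\infty, n-1)})$ and $\Cat(\T^{\leq 0}\Cat_{(\infty, n-1)})$. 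The base case $n = 0$ is Example~\ref{e:postnikov for prestable on TS}. For the inductive step, one first needs to know that $\V := \Cat_{(\infty, n-1)}$ satisfies the hypotheses of Proposition~\ref{p:t-structure on tangent of categories}: it is a differentiable presentable SM $\infty$-category with compact unit, which follows by iterating Remark~\ref{r:differentiable} starting from the fact that $\cS$ is compactly generated, presentable symmetric monoidal for the Cartesian product, and has compact unit. Since $\T\V$ carries a monoidal $t$-orientation by the inductive hypothesis, Proposition~\ref{p:t-structure on tangent of categories} produces the required monoidal $t$-orientation on $\T\Cat(\V) = \T\Cat_{(\infty, n)}$, of the asserted form; restricting to the fiber over a fixed $(\infty, n)$-category $\C$ then yields a $t$-structure on $\T_\C\Cat_{(\infty, n)}$ by condition~(2) of Definition~\ref{d:t-structure}.

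It remains to identify the connective and coconnective objects. By \cite[Theorem 5.3.17]{GH15} (as used in Step~3 of the proof of Proposition~\ref{p:tangent to categories}) I may assume that $\C$ is a categorical algebra with a \emph{set} $S$ of objects; this does not affect the statement, since equivalent objects have equivalent mapping objects and the $t$-structures involved are invariant under equivalence. The construction of the $t$-orientation on $\T\Cat_{(\infty, n)}$ follows the argument of Step~2 of the proof of Proposition~\ref{p:tangent to categories}, so $\T^{\geq 0}\Cat_{(\infty, n)} \lrar \Cat_{(\infty, n)}$ and $\T^{\leq 0}\Cat_{(\infty, n)} \lrar \Cat_{(\infty, n)}$ sit in pullback squares of the shape~\eqref{d:localization diagram}. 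Hence the intersections of $\T^{\geq 0}\Cat_{(\infty, n)}$ and $\T^{\leq 0}\Cat_{(\infty, n)}$ with the fiber over $\C$ are, respectively, the fibers over $\C$ of
$$\Alg_{\Cat}^{S}\big(\T^{\geq 0}\Cat_{(\infty, n-1)}\big) \subseteq \Alg_{\Cat}^{S}\big(\T\Cat_{(\infty, n-1)}\big) \supseteq \Alg_{\Cat}^{S}\big(\T^{\leq 0}\Cat_{(\infty, n-1)}\big),$$
and $\T_\C\Cat_{(\infty, n)} \simeq \T_\C\Alg_{\O_S}(\Cat_{(\infty, n-1)})$, where $\O_S$ is the $(S\times S)$-coloured $\infty$-operad with $\Alg_{\O_S}(-) \simeq \Alg_{\Cat}^S(-)$.

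Now applying Proposition~\ref{p:t-structure on tangent of algebras} with $\V = \Cat_{(\infty, n-1)}$ — legitimate since $\T\Cat_{(\infty, n-1)}$ carries a monoidal $t$-orientation by the inductive hypothesis — identifies these subcategories as the full subcategories of those $E \in \T_\C\Cat_{(\infty, n)}$ such that for every colour $(x, y) \in S \times S$, the image $(x, y)^*E$ is $0$-connective, resp.\ $0$-coconnective, in $\T\Cat_{(\infty, n-1)}$, equivalently in the fiber $\T_{\Map_\C(x, y)}\Cat_{(\infty, n-1)}$ that contains it. Finally, unwinding the chain of equivalences $\T_\C\Cat_{(\infty, n)} \simeq \Cat(\T\Cat_{(\infty, n-1)}) \times_{\Cat_{(\infty, n)}} \{\C\} \simeq \T_\C\Alg_{\O_S}(\Cat_{(\infty, n-1)})$ of Proposition~\ref{p:tangent to categories}, the colour-$(x, y)$ forgetful functor $(x, y)^*$ is precisely the mapping-object functor $\Map_{(-)}(x, y)$ appearing in the statement, which gives the claimed characterization. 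The non-formal content of the argument is entirely carried by Propositions~\ref{p:tangent to categories}, \ref{p:t-structure on tangent of categories} and~\ref{p:t-structure on tangent of algebras}; the main points requiring care are the propagation of the standing hypotheses (differentiability, compactness of the unit, and especially the existence of a \emph{monoidal} $t$-orientation) through the induction, and matching the ``one colour at a time'' criterion of Proposition~\ref{p:t-structure on tangent of algebras} with the functor $\Map_{(-)}(x, y)$ after reducing to a set of objects.
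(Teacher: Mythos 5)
Your proof is correct and follows exactly the paper's route: the paper derives this corollary in one line by applying Proposition \ref{p:t-structure on tangent of categories} inductively starting from Example \ref{e:postnikov for prestable on TS}, with the identification of (co)connective objects coming from diagram \eqref{d:t-structure for cats} and Proposition \ref{p:t-structure on tangent of algebras}, which is precisely what you spell out. Your extra care about propagating the hypotheses (differentiability, compact unit, monoidality of the $t$-orientation, via Remark \ref{r:differentiable}) and about reducing to a set of objects via \cite[Theorem 5.3.17]{GH15} just makes explicit what the paper leaves implicit.
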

The construction of Proposition \ref{p:t-structure on tangent of categories} shows that the heart of the $t$-structure on $\T\Cat(\V)$ can be identified with the Cartesian fibration
$$
\Cat(\T^{\heartsuit}\V)\lrar \Cat(\V).
$$
Applying this inductively, one finds the following inductive description of the heart of the $t$-structure on $\T\Cat_{(\infty, n)}$:
\begin{defn}[{cf.\ \cite[Definition 3.5.10]{Lur09b}}]\label{d:local systems}
The $\infty$-category of \textbf{local systems of abelian groups on $(\infty, 0)$-categories} is defined to be the domain of the Cartesian fibration
$$
\Loc_{(\infty, 0)}\lrar \cS
$$
classified by the functor $\cS^{\op}\lrar \Cat_{\infty}$ sending a space  $X$ to the category of local systems $\Fun\big(\Pi_1(X), \Ab\big)$. This carries a symmetric monoidal structure given by the Cartesian product.

For $n\geq 1$, we define the symmetric monoidal $\infty$-category of \textbf{local systems of abelian groups on $(\infty, n)$-categories} to be the domain of the Cartesian fibration
$$
\Loc_{(\infty, n)} = \Cat\big(\Loc_{(\infty, n-1)}\big)\lrar \Cat\big(\Cat_{(\infty, n-1)}\big)= \Cat_{(\infty, n)}.
$$
Note that $\Loc_{(\infty, n)}$ inherits a symmetric monoidal structure from $\Loc_{(\infty, n-1)}$, such that the projection to $\Cat_{(\infty, n)}$ is symmetric monoidal.
\end{defn}
For each $\C$, let us denote the fiber of $\Loc_{(\infty, n)}$ over $\C$ by $\Loc_{(\infty, n)}(\C)$ and refer to it as the \textbf{abelian category of local systems on} $\C$. Note that $\Loc_{(\infty, n)}(\C)$ is indeed an (ordinary) abelian category, since it can be identified with the fiber of the heart $\T^{\heartsuit}\Cat_{(\infty, n)}\lrar \Cat_{(\infty, n)}$ over $\C$. 
\begin{rem}\label{r:local system}
The proof of Proposition \ref{p:t-structure on tangent of categories} (see diagram \eqref{d:t-structure for cats}) shows that for any $\V$-enriched $\infty$-category $\CC$ with space of objects $X$, there is an equivalence 
$$
\T^\heartsuit_{\CC}\Cat(\V)\simeq \Alg_{\O_X}(\T^{\heartsuit}\V)\times_{\Alg_{\O_X}(\V)} \{\CC\},
$$
where $\O_X$ denotes the operad for categorical algebras with a space of objects $X$. 

Applying this inductively to an $(\infty, n)$-category $\C$ with space of objects $X$, one finds that a local system $\A$ on $\C$ is given by the datum of map of $\infty$-operads, ,
$$\xymatrix{
& \Loc_{(\infty, n-1)}^{\otimes}\ar[d]\\
\O_X\ar[r]_-{\C} \ar@{..>}[ru]^{\A} & \Cat_{(\infty, n-1)}^{\otimes}.
}$$
Note that the right vertical functor is a $1$-categorical fibration, in the sense that the induced maps on mapping spaces have \emph{discrete fibers}. In particular, specifying a lift $\A$ corresponds to choosing images for the maps in $\O_X$ satisfying a certain associativity condition, but \emph{no higher coherences}. In other words, the datum of a local system $\A$ on an $(\infty, n)$-category $\C$ can therefore be described inductively as follows:
\begin{enumerate}[leftmargin=*, label={(\roman*)}]\setlength{\itemsep}{4pt}
\item[(i)] for each $x, y\in \C$, a local system $\A_{x, y}$ over the $(\infty, n-1)$-category of maps $\C(x, y)$.
\item[(ii)] for every triple $x, y, z\in \C$, a map of local systems 
$$
m_{x, y, z}\colon p_1^*\A_{y, z}\times p_0^*\A_{x, y} \lrar c^*\A_{x, z}$$
where $c\colon \C(y, z)\times \C(x, y)\lrar \C(x, z)$ is the composition and $p_0\colon \C(y, z)\times \C(x, y)\lrar \C(y, z)$ and $p_1\colon \C(y, z)\times \C(x, y)\lrar \C(x, y)$ are the projections.
\item[(iii)] for every quadruple $w, x, y, z\in \C$, there is an associativity condition, given by $m_{w, y, z}\circ (\id\times m_{w, x, y})=m_{w, x, z}\circ (m_{x, y, z}\times \id)$.
\end{enumerate}
Definition \ref{d:local systems} therefore gives a precise formulation of the (informal) definition of local systems on $(\infty, n)$-categories appearing in \cite[Definition 3.5.10]{Lur09b}.
\end{rem}

The inductive construction of the abstract Postnikov tower in Theorem \ref{t:main-theorem-oo-n-cats} shows that all parametrized spectra appearing in it are contained in the heart of the $t$-structure on $\T\Cat_{(\infty, n)}$. We therefore obtain the following result (which appears without proof as \cite[Claim 3.5.18]{Lur09b}):
\begin{cor}\label{c:local systems}
For every $(\infty, n)$-category $\C$, the parametrized spectrum $\rH\pi_a(\C)$ of Theorem \ref{t:main-theorem-oo-n-cats} is the Eilenberg--Maclane spectrum associated to
$$
\pi_a(\C)\in \Loc_{(\infty, n)}(\C)=\T^\heartsuit_{\Ho_{(n+1, n)}(\C)}\Cat_{(\infty, n)}.
$$
In terms of Remark \ref{r:local system}, it is the Eilenberg--Maclane spectrum of the local system of abelian groups on $\Ho_{(n+1, n)}\C$ given inductively by $\pi_a(\C)_{x, y} = \pi_a\Map_{\C}(x, y)$, for any $x, y\in \C$.
\end{cor}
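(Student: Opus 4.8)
The plan is to prove both assertions at once, by induction on the categorical dimension $n$, by unwinding the inductive construction (Theorem \ref{t:main-theorem}, Corollary \ref{c:enriched n-cats}) of the multiplicative abstract Postnikov tower $\Phi_{\Cat_n}$ on $\Cat_{(\infty,n)}$ used in the proof of Theorem \ref{t:main-theorem-oo-n-cats}. Recall that $\rH\pi_a(\C)$ is by definition the $(a+1)$-fold desuspension of the $k$-invariant parametrized spectrum $\K_a(\Phi_{\Cat_n})(\C)$, which lives in the tangent fibre $\T_{\Ho_{(n+1,n)}\C}\Cat_{(\infty,n)}$. For the base case $n=0$, an $(\infty,0)$-category is a space $X$, and by Example/Proposition \ref{e:postnikov for spaces} and the explicit formula for its $k$-invariants, $\Sigma^{-(a+1)}\K_a(\Phi)(X)$ is the parametrized Eilenberg--Maclane spectrum of the local system $x\mapsto \pi_a(X,x)$ on $\Pi_1(X) = \Ho_{(1,0)}X$; this parametrized spectrum lies in the heart $\T^{\heartsuit}\cS$ by the description of the $t$-orientation in Example \ref{e:postnikov for prestable on TS}. (Throughout we use that $\cS$, hence each $\Cat_{(\infty,m)}$, is differentiable with compact unit, by Remark \ref{r:differentiable}, so that Propositions \ref{p:tangent to categories} and \ref{p:t-structure on tangent of categories} apply.)

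\textbf{Inductive step.} Recall from the proof of Theorem \ref{t:main-theorem} that $\K_a(\Phi_{\Cat_n})(\C)$ is, up to completion, the categorical algebra in $\T\Cat_{(\infty,n-1)}$ with space of objects $\Ob(\C)$ whose mapping object between $x$ and $y$ is $\K_a(\Phi_{\Cat_{n-1}})(\Map_\C(x,y))$. Since completion does not change the mapping objects of a categorical algebra, the ``mapping object'' functor
\[
\Map_{(-)}(x,y)\colon \T_{\Ho_{(n+1,n)}\C}\Cat_{(\infty,n)}\lrar \T_{\Ho_{(n,n-1)}\Map_\C(x,y)}\Cat_{(\infty,n-1)}
\]
sends $\K_a(\Phi_{\Cat_n})(\C)$ to $\K_a(\Phi_{\Cat_{n-1}})(\Map_\C(x,y))$. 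This functor is exact: under the equivalence $\T\Cat(\V)\simeq \Cat(\T\V)$ of Proposition \ref{p:tangent to categories} it is the forgetful functor for the colour $(x,y)$ of the $\infty$-operad $\O_{\Ob(\C)}$ whose algebras are categorical algebras with that space of objects, hence it preserves all limits on the stable tangent fibres, and therefore also all finite colimits, since pushout squares coincide with pullback squares in a stable $\infty$-category. In particular it commutes with the desuspension $\Sigma^{-(a+1)}$, whence
\[
\Map_{(-)}(x,y)\big(\rH\pi_a(\C)\big) \simeq \Sigma^{-(a+1)}\K_a(\Phi_{\Cat_{n-1}})(\Map_\C(x,y)) \simeq \rH\pi_a\big(\Map_\C(x,y)\big).
\]

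\textbf{Conclusion.} By the inductive hypothesis the right-hand side lies in the heart $\T^{\heartsuit}\Cat_{(\infty,n-1)}$ and is the Eilenberg--Maclane spectrum of the local system $\pi_a(\Map_\C(x,y))$. Now the heart of the $t$-orientation on $\T\Cat_{(\infty,n)}\simeq\Cat(\T\Cat_{(\infty,n-1)})$ is detected on mapping objects: by the proof of Proposition \ref{p:t-structure on tangent of categories} (see diagram \eqref{d:t-structure for cats}) together with Proposition \ref{p:t-structure on tangent of algebras}, an object of the tangent fibre over $\Ho_{(n+1,n)}\C$ lies in $\T^{\heartsuit}\Cat_{(\infty,n)}\simeq\Cat(\T^{\heartsuit}\Cat_{(\infty,n-1)})$ precisely when all of its mapping objects lie in $\T^{\heartsuit}\Cat_{(\infty,n-1)}$. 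Since we have just computed these mapping objects, it follows that $\rH\pi_a(\C)\in \T^{\heartsuit}_{\Ho_{(n+1,n)}\C}\Cat_{(\infty,n)} = \Loc_{(\infty,n)}(\C)$; and, under the inductive description of local systems in Remark \ref{r:local system}, its $(x,y)$-component is the local system $\pi_a(\Map_\C(x,y))$, with composition and associativity data carried by the categorical-algebra structure of $\rH\pi_a(\C)$. Unwinding this recursion down to the case of spaces identifies $\rH\pi_a(\C)$ with the Eilenberg--Maclane spectrum of the local system of abelian groups on $\Ho_{(n+1,n)}\C$ given inductively by $\pi_a(\C)_{x,y} = \pi_a\Map_\C(x,y)$, as claimed.

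\textbf{Main obstacle.} The delicate input is the exactness step: one must verify that desuspension in the tangent fibre $\T_\C\Cat_{(\infty,n)}$ is computed on mapping objects, i.e.\ that the mapping-object functors are exact and hence compatible both with the inductive equivalence $\T\Cat(\V)\simeq \Cat(\T\V)$ of Proposition \ref{p:tangent to categories} and with its monoidal $t$-orientation. Once this is in hand, the remaining work is bookkeeping: tracking which base object each parametrized spectrum lives over and matching the inductive shape of $\K_a(\Phi_{\Cat_n})$ against Definition \ref{d:local systems} and Remark \ref{r:local system}.
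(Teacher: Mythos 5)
Your proposal is correct and takes essentially the same route as the paper: the paper's (very brief) argument is exactly that the inductive construction of $\Phi_{\Cat_n}$ applies the tower for spaces to mapping objects, so that the $k$-invariant spectra land in the inductively defined heart $\Cat(\T^{\heartsuit}\Cat_{(\infty,n-1)})$ of local systems, with base case Example \ref{e:postnikov for spaces} and Example \ref{e:postnikov for prestable on TS}. Your write-up just makes explicit the details the paper leaves implicit — the identification of the mapping objects of $\K_a(\Phi_{\Cat_n})(\C)$, the exactness of the mapping-object functors so that the desuspension is computed on mapping objects, and the detection of the heart on mapping objects via \eqref{d:t-structure for cats} — all of which are consistent with the paper's constructions.
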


\end{document}